\newtheorem{theorem}{Theorem}[section]
\newtheorem{lemma}[theorem]{Lemma}
\newtheorem{corollary}[theorem]{Corollary}
\newtheorem{definition}[theorem]{Definition}
\newtheorem{proposition}[theorem]{Proposition}
\newtheorem{example}[theorem]{Example}
\newtheorem{remark}[theorem]{Remark}
\newcommand{\fqn}{\mathbb{F}_{q^n}}
\newcommand{\F}{{\mathbb F}}
\newcommand{\fq}{{\mathbb F}_{q}}
\newcommand{\PG}{\mathrm{PG}}
\title{Classifications and constructions of minimum size linear sets}
\author{Vito Napolitano, Olga Polverino, Paolo Santonastaso and 
Ferdinando Zullo\thanks{Dipartimento di Matematica e Fisica, Universit\`a degli Studi della Campania ``Luigi Vanvitelli'', Caserta, Italy.
\{vito.napolitano,olga.polverino,paolo.santonastaso,ferdinando.zullo\}@unicampania.it}
}
\date{ }
\begin{document}
\maketitle

\begin{abstract}
This paper aims to study linear sets of \emph{minimum size} in the projective line, that is $\fq$-linear sets of rank $k$ in $\PG(1,q^n)$ admitting one point of weight one and having size $q^{k-1}+1$. Examples of these linear sets have been found by Lunardon and the second author (2000) and, more recently, by Jena and Van de Voorde (2021).
However, classification results for minimum size linear sets are known only for $k\leq 5$. In this paper we provide classification results for those $L_U$ admitting two points with complementary weights.
We construct new examples and also study the related $\mathrm{\Gamma L}(2,q^n)$-equivalence issue.
These results solve an open problem posed by Jena and Van de Voorde.
The main tool relies on two results by Bachoc, Serra and Z\'emor (2017 and 2018) on the linear analogues of Kneser's and Vosper's theorems.
We then conclude the paper pointing out a connection between critical pairs and linear sets, obtaining also some classification results for critical pairs.
\end{abstract}

\noindent\textbf{MSC2020:}{ 05B25; 51E20; 11P70; 12F99 }\\
\textbf{Keywords:}{ Linear set; dual basis; critical pairs}

\section{Introduction}

Let $q$ be a prime power, $n$ be a positive integer and let $\fqn$ be the degree $n$ extension of $\fq$. Let $V$ be an $r$-dimensional $\fqn$-vector space.
An $\fq$-\textbf{linear set} in $\PG(r-1,q^n)=\PG(V,\fqn)$ can be defined as a set of points for which their defining vectors lie in a fixed $\fq$-subspace of $V$. The term \emph{linear} dates back to \cite{Lun}, where Lunardon constructs special type of blocking sets. Since then, linear sets have been used to construct and to classify several algebraic and geometric objects, and more recently they have been also employed in coding theory. We refer to \cite{LavVdV,Polverino} for more details.

This paper aims to study $\fq$-linear set in $\PG(1,q^n)$ having \emph{minimum size}. 
As proved by De Beule and Van de Voorde in \cite[Theorem 1.2]{DeBeuleVdV} (generalizing \cite[Lemma 2.2]{BoPol}), an $\fq$-linear set $L_U$ of rank $k$ in $\PG(1,q^n)$ with at least one point point of weight one has at least $q^{k-1}+1$ points.
A such linear set with exactly this number of points is said to be of \textbf{minimum size}.
The classical example is $L_U$ where
\[ U=\{ (x,\mathrm{Tr}_{q^n/q}(x)) \colon x \in \fqn\}, \]
and $\mathrm{Tr}_{q^n/q}(x)=x+\ldots+x^{q^{n-1}}$, which has size $q^{n-1}+1$, $q^{n-1}$ points of weight one and one point of weight $n-1$.
Other examples of such linear sets with maximum rank $n$ have been found in \cite{LunPol2000}, recently generalized by Jena and Van de Voorde \cite{DVdV}.
Minimum size linear sets in $\PG(1,q^n)$ of rank at most $5$ have been classified and they all correspond to the linear sets described above.
Jena and Van de Voorde in \cite[(D) Section 2.5]{DVdV} left as an open problem the determination of whether or not there exist other examples of minimum size linear sets.
In this paper, we analyze the case of a minimum size linear set $L_U$ of rank $k$ in $\PG(1,q^n)$ admitting two points of \emph{complementary weights of type} $(k-r,r)$, that is there exist two distinct points $P_1$ and $P_2$ of weight $k-r$ and $r$ (with $k-r\geq r$) in $L_U$, respectively.
We first provide classification results of minimum size linear sets $L_U$ in the following cases:
\begin{itemize}
    \item[(A)] $L_U$ admits two points with complementary weights of type $(k-2,2)$;
    \item[(B)] $L_U$ admits two points with complementary weights of type $(k-r,r)$ and $n$ is prime.
\end{itemize}
In this latter case, we prove that they all correspond to the construction found in \cite{DVdV}, whereas in the former case we provide more examples.
Then we introduce a general procedure to construct examples of linear sets in $\PG(1,q^n)$ starting from a linear set $L_{U'}$ in $\PG(1,q^t)$, with $t \mid n$, whose weight spectrum is completely determined by the weight distribution of $L_{U'}$.
In particular, using the family of minimum size linear sets of \cite{DVdV}, we are able to get more examples of minimum size linear sets different from those found in \cite{DVdV} and also with a different \emph{weight spectrum}. We then study the $\mathrm{\Gamma L}(2,q^n)$-equivalence among the subspaces defining the linear set of \cite{DVdV} and those found in this paper.
These results answer to the open problem posed in \cite[(D) Section 2.5]{DVdV}.

The main tools in our investigation regard the linear analogue of Kneser's theorem and Vosper's theorem from additive combinatorics, established both by Bachoc, Serra and Z\'emor.
We will recall these results in the framework of finite fields extensions and we use the following notation.
Let $S$ and $T$ be two $\fq$-subspaces of $\fqn$, then $ST=\{s\cdot t \colon s \in S, t\in T\}$.

\begin{theorem} \cite[Theorem 3]{BSZ2015} \label{teo:bachocserrazemor}
Let $S$ be an $\F_q$-subspace of $\F_{q^n}$. Then
\begin{itemize}
    \item either for every $\F_q$-subspace $T \subseteq \F_{q^n}$ we have
    \[
    \dim_{\F_q}(\langle S T \rangle_{\F_q})\geq \min\{\dim_{\F_q}(S)+\dim_{\F_q}(T)-1,n\},
    \]
    \item or there exists a positive integer $t>1$ dividing $n$, such that for every $\F_q$-subspace $T \subseteq \F_{q^n}$ satisfying 
    \[
    \dim_{\F_q}(\langle S T \rangle_{\F_q})< \dim_{\F_q}(S)+\dim_{\F_q}(T)-1,
    \]
    we have that $\langle S T \rangle_{\F_q}$ is also an $\F_{q^t}$-subspace.
\end{itemize}
\end{theorem}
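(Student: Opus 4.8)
The plan is to follow the strategy behind all Kneser-type theorems: isolate the relevant \emph{stabilizer} and prove a dimension inequality governed by it. For an $\F_q$-subspace $W\subseteq\F_{q^n}$ set $H(W)=\{x\in\F_{q^n}\colon xW\subseteq W\}$. Since $H(W)$ is closed under addition and multiplication, contains $\F_q$, and is a finite integral domain sitting inside the field $\F_{q^n}$, it is itself a subfield $\F_{q^t}$ with $t\mid n$; moreover $W$ is an $\F_{q^t}$-subspace, so $t\mid\dim_{\F_q}W$. The one structural fact I would record at the outset is that $H(S)\subseteq H(\langle ST\rangle_{\F_q})$ for every $T$: indeed $xS\subseteq S$ forces $x\langle ST\rangle_{\F_q}=\langle (xS)T\rangle_{\F_q}\subseteq\langle ST\rangle_{\F_q}$. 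Thus $\langle ST\rangle_{\F_q}$ is automatically a module over $H(S)$, which already disposes of the case $H(S)\neq\F_q$ (take $t=[\,H(S):\F_q\,]>1$), and isolates $H(S)=\F_q$ as the genuine case.

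The core is the linear analogue of Kneser's inequality: writing $W=\langle ST\rangle_{\F_q}$ and $H=H(W)$, one has $\dim_{\F_q}W\ge\dim_{\F_q}\langle SH\rangle_{\F_q}+\dim_{\F_q}\langle TH\rangle_{\F_q}-\dim_{\F_q}H$. I would prove this by induction on $\dim_{\F_q}T$ using the multiplicative Dyson transform: for $a\in\F_{q^n}^{\ast}$ put $S'=S+aT$ and $T'=T\cap a^{-1}S$. A direct check gives $S'T'\subseteq\langle ST\rangle_{\F_q}$ (for $t'\in T'$ one has $St'\subseteq ST$ and $aTt'=T(at')\subseteq TS$ because $at'\in S$), hence $\langle S'T'\rangle_{\F_q}\subseteq W$, while $\dim_{\F_q}S'+\dim_{\F_q}T'=\dim_{\F_q}S+\dim_{\F_q}T$ since $\dim_{\F_q}(T\cap a^{-1}S)=\dim_{\F_q}(aT\cap S)$. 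Choosing $a$ with $0<\dim_{\F_q}T'<\dim_{\F_q}T$ lets the induction proceed, the base case $\dim_{\F_q}T=1$, say $T=\langle\tau\rangle_{\F_q}$, being an equality since $W=\tau S$ is a scaling of $S$. The delicate bookkeeping is to control how $H$ changes under the transform; the degenerate configurations (no admissible $a$, or $S,T$ already $H$-stable) are treated separately and correspond to the extremal cases of the inequality.

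With the inequality in hand the dichotomy is immediate for a \emph{fixed} $T$. Because $1\in H$ we have $S\subseteq\langle SH\rangle_{\F_q}$ and $T\subseteq\langle TH\rangle_{\F_q}$, so $\dim_{\F_q}W\ge\dim_{\F_q}S+\dim_{\F_q}T-\dim_{\F_q}H$. Hence whenever $\dim_{\F_q}W<\dim_{\F_q}S+\dim_{\F_q}T-1$ we are forced to have $\dim_{\F_q}H>1$, i.e. $H=\F_{q^{t_T}}$ with $t_T>1$, and then $W=\langle ST\rangle_{\F_q}$ is an $\F_{q^{t_T}}$-subspace. So every single deficient $T$ produces a nontrivial field over which its product is a module; if no $T$ is deficient we are in the first alternative.

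The main obstacle is \emph{uniformity}: the statement asks for one integer $t>1$ serving all deficient $T$ at once, whereas the argument so far yields, for each deficient $T$, only its own stabilizer degree $t_T>1$. In the remaining case $H(S)=\F_q$ these fields are not dictated by $S$ and can a priori differ, so one must show that the degrees $t_T$ admit a common divisor larger than $1$, placing a single $\F_{q^t}$ inside every $H(\langle ST\rangle_{\F_q})$. I would attack this by comparing stabilizers across different deficient subspaces. The useful observation is that for any $T_1,T_2$ the triple product satisfies $\langle ST_1T_2\rangle_{\F_q}=\langle\langle ST_1\rangle_{\F_q}\,T_2\rangle_{\F_q}=\langle\langle ST_2\rangle_{\F_q}\,T_1\rangle_{\F_q}$, so it is simultaneously a module over $H(\langle ST_1\rangle_{\F_q})$ and over $H(\langle ST_2\rangle_{\F_q})$, hence over their compositum. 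Combining this with the inequality applied to the enlarged left factors $\langle S\,\F_{q^{t_T}}\rangle_{\F_q}$ (which leave the products unchanged but raise the stabilizer of $S$) should force the various $H(\langle ST\rangle_{\F_q})$ into a common subfield; an extremal choice of a deficient $T$ making $\dim_{\F_q}\langle ST\rangle_{\F_q}$ maximal (short of $\F_{q^n}$) is the natural candidate to pin down $t$. This reconciliation of the per-$T$ stabilizers into a single $\F_{q^t}$ is exactly where the real work lies, and is the step I expect to be the hardest.
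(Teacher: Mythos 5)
First, note that the paper does not prove this statement at all: Theorem \ref{teo:bachocserrazemor} is imported verbatim from Bachoc--Serra--Z\'emor \cite{BSZ2015}, so there is no internal proof to compare with, and your proposal has to stand as a self-contained proof of the cited result. Your overall strategy (the stabilizer $H(W)=\{x\colon xW\subseteq W\}$ is a subfield $\F_{q^t}$ with $t\mid n$; the inclusion $H(S)\subseteq H(\langle ST\rangle_{\F_q})$ disposes of the case $H(S)\neq\F_q$; a Kneser-type inequality forces $H(\langle ST\rangle_{\F_q})\supsetneq\F_q$ for each deficient $T$) is indeed the route of Hou--Leung--Xiang and of \cite{BSZ2015}, and the pieces you actually write out are correct: the subfield property of $H(W)$, the inclusion $S'T'\subseteq\langle ST\rangle_{\F_q}$ for the multiplicative Dyson transform, the identity $\dim_{\F_q}S'+\dim_{\F_q}T'=\dim_{\F_q}S+\dim_{\F_q}T$, and the base case $\dim_{\F_q}T=1$.

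There remain, however, two genuine gaps. First, the inductive step of the Kneser inequality is only named, not carried out: the entire difficulty of Kneser-type theorems lies exactly in controlling how the stabilizer changes under the transform and in handling the configurations where no admissible $a$ exists, and you defer precisely this, so the core inequality is asserted rather than proved. Second, and more importantly, the statement requires a \emph{single} $t>1$ serving all deficient $T$ simultaneously, and, as you candidly observe, your argument only produces a stabilizer degree $t_T>1$ depending on $T$. The proposed reconciliation via triple products is not pushed to a conclusion: knowing that $\langle ST_1T_2\rangle_{\F_q}$ is a module over the compositum of $H(\langle ST_1\rangle_{\F_q})$ and $H(\langle ST_2\rangle_{\F_q})$ does not by itself prevent those two fields from meeting only in $\F_q$; what is needed is that the degrees $t_{T_1}$ and $t_{T_2}$ always share a common divisor greater than $1$, and no mechanism forcing this is exhibited. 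As written, the proposal therefore establishes only the weaker, per-$T$ version of the second alternative, while the uniformity of $t$ --- the feature that distinguishes the dichotomy from the bare Kneser inequality --- is left open.
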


\begin{theorem} \cite[Theorem 3]{BSZ2017} \label{teo:bachocserrazemor2}
Suppose that $n$ is a prime. Let $S,T$ be $\F_q$-subspaces of $\F_{q^n}$ such that $2 \leq \dim_{\F_q}(S),\dim_{\F_q}(T)$ and $\dim_{\F_q}(\langle S T\rangle_{\fq}) \leq n-2$. If
\[
\dim_{\F_q}(\langle S T\rangle_{\fq}) =\dim_{\F_q}(S)+\dim_{\F_q}(T)-1,
\]
then $S=g \langle 1,a,\ldots,a^{\dim_{\F_q}(S)-1}\rangle_{\F_q}$ and $T=g' \langle 1,a,\ldots,a^{\dim_{\F_q}(T)-1}\rangle_{\F_q}$, for some $g,g',a \in \F_{q^n}$.
\end{theorem}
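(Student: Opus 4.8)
The plan is to combine the linear Kneser theorem (Theorem~\ref{teo:bachocserrazemor}) with a structural ``almost-invariance'' lemma, and then to run an induction on $\dim_{\F_q}(S)+\dim_{\F_q}(T)$. First I would record what primality buys us. Since $n$ is prime, the only divisor $t>1$ of $n$ is $n$ itself, so the degenerate alternative of Theorem~\ref{teo:bachocserrazemor} would force $\langle ST\rangle_{\F_q}$ to be an $\F_{q^n}$-subspace, i.e. all of $\F_{q^n}$; as $\dim_{\F_q}(\langle ST\rangle_{\F_q})\le n-2$, this is excluded, so $(S,T)$ sits in the first (critical) alternative, and the same reasoning applies to any pair whose product-span has dimension at most $n-2$. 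After rescaling $S\mapsto s^{-1}S$ and $T\mapsto t^{-1}T$ (which only modifies $g,g'$) I may assume $1\in S\cap T$.

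The conceptual core is the base case $\dim_{\F_q}(S)=2$ (and symmetrically $\dim_{\F_q}(T)=2$). Here $S=\langle 1,a\rangle_{\F_q}$ and $\langle ST\rangle_{\F_q}=T+aT$, so the hypothesis reads $\dim_{\F_q}(T+aT)=\dim_{\F_q}(T)+1$. One has $a\notin\F_q$ (otherwise $T+aT=T$), and since $n$ is prime, $a$ generates $\F_{q^n}/\F_q$, so $1,a,\dots,a^{n-1}$ is a basis. I would prove that such a $T$ equals $g'\langle 1,a,\dots,a^{m-1}\rangle_{\F_q}$, where $m=\dim_{\F_q}(T)$. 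Consider the filtration $T^{(j)}=\sum_{i=0}^{j}a^iT$, which satisfies $T^{(j)}=T^{(j-1)}+aT^{(j-1)}$. Multiplication by $a$ induces a surjection $T^{(j)}/T^{(j-1)}\twoheadrightarrow T^{(j+1)}/T^{(j)}$, so the successive increments are non-increasing; starting from an increment of $1$, each step adds exactly one dimension until $T^{(j)}$ is $a$-invariant, which (as the only $a$-invariant subspaces are $0$ and $\F_{q^n}$) means it is all of $\F_{q^n}$. Hence $\dim_{\F_q}(T^{(j)})=\min(m+j,n)$. Because multiplication by $a$ is self-adjoint for the nondegenerate trace form $b(u,v)=\Tr_{q^n/q}(uv)$, the complement $T^{\perp}$ satisfies the same hypothesis for $a^{-1}$, and dualising the increment computation gives $\dim_{\F_q}\!\big(\bigcap_{i=0}^{j}a^iT\big)=\max(m-j,0)$. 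Taking $j=m-1$ produces a line $\langle h\rangle_{\F_q}=\bigcap_{i=0}^{m-1}a^iT$; then $g:=a^{-(m-1)}h$ has $g,ag,\dots,a^{m-1}g\in T$, and these are linearly independent because $1,a,\dots,a^{m-1}$ are. Thus $T=g\langle 1,a,\dots,a^{m-1}\rangle_{\F_q}$, and $S=\langle 1,a\rangle_{\F_q}$ is the progression for the same $a$.

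For the inductive step ($\dim_{\F_q}(S),\dim_{\F_q}(T)\ge 3$) I would peel one dimension off $T$. For a hyperplane $T'\subset T$ we have $\langle ST'\rangle_{\F_q}\subseteq\langle ST\rangle_{\F_q}$, while the linear Kneser theorem gives $\dim_{\F_q}(\langle ST'\rangle_{\F_q})\ge\dim_{\F_q}(\langle ST\rangle_{\F_q})-1$; the aim is to choose $T'$ so that $(S,T')$ is again critical, apply the induction hypothesis to get a common ratio $a$ with $S=g\langle 1,\dots,a^{\dim_{\F_q}(S)-1}\rangle_{\F_q}$ and $T'=g'\langle 1,\dots,a^{\dim_{\F_q}(T)-2}\rangle_{\F_q}$, and then to use criticality of $(S,T)$ to force the remaining generator $v$ (with $T=T'+\langle v\rangle_{\F_q}$) to extend the progression to $T=g'\langle 1,\dots,a^{\dim_{\F_q}(T)-1}\rangle_{\F_q}$. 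Here one exploits that the product of two progressions with the same ratio is again a progression, so $\langle ST'\rangle_{\F_q}=gg'\langle 1,\dots,a^{\dim_{\F_q}(S)+\dim_{\F_q}(T)-3}\rangle_{\F_q}$ and the one extra dimension in $\langle ST\rangle_{\F_q}$ pins down $v$.

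I expect the main obstacle to be precisely this reduction: showing that a hyperplane $T'$ keeping $(S,T')$ critical exists (equivalently, that not every hyperplane strictly lowers the product-span dimension), and then that the new generator is forced to the correct end of the progression with the \emph{same} ratio $a$. The base-case lemma, by contrast, is self-contained once the increment-and-duality bookkeeping is in place. A cleaner route I would keep in reserve is to bypass the induction entirely by exhibiting, directly from criticality of $(S,T)$, a single $a$ with $\dim_{\F_q}(S+aS)=\dim_{\F_q}(S)+1$ and $\dim_{\F_q}(T+aT)=\dim_{\F_q}(T)+1$, and then applying the base-case lemma to $S$ and to $T$ separately; the delicate point there is producing a $2$-dimensional subspace of $S$ that still forms a critical pair with $T$.
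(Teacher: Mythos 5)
This statement is not proved in the paper at all: it is quoted verbatim as \cite[Theorem 3]{BSZ2017}, so there is no internal proof to compare against. The closest the paper comes is Lemma \ref{lemma:power}, whose part (b) is exactly your base case $\dim_{\F_q}(S)=2$ (via $\dim_{\F_q}(T+aT)=\dim_{\F_q}(T)+1 \Leftrightarrow \dim_{\F_q}(T\cap aT)=\dim_{\F_q}(T)-1$). Your treatment of that base case is correct and complete: the non-increasing increments of $T^{(j)}=\sum_{i=0}^{j}a^iT$, the fact that an $a$-invariant nonzero subspace must be all of $\fqn$ when $n$ is prime, and the trace-duality $(aT)^{\perp}=a^{-1}T^{\perp}$ turning the sum filtration into the intersection filtration are all sound, and they recover the paper's Lemma \ref{lemma:power}(b).

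The genuine gap is the inductive step, and it is not a technicality you can defer: it is the entire content of the theorem. You need a hyperplane $T'\subset T$ with $(S,T')$ still critical, but Theorem \ref{teo:bachocserrazemor} only brackets $\dim_{\F_q}(\langle ST'\rangle_{\fq})$ in $\{\dim_{\F_q}(S)+\dim_{\F_q}(T)-2,\ \dim_{\F_q}(S)+\dim_{\F_q}(T)-1\}$ for \emph{every} hyperplane; nothing in the Kneser-type statement forces the lower value to be attained for \emph{some} $T'$, and a priori all hyperplanes could satisfy $\langle ST'\rangle_{\fq}=\langle ST\rangle_{\fq}$. A posteriori such a $T'$ exists (take the truncated progression), but proving its existence without already knowing the structure of $T$ is essentially equivalent to the theorem. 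The same remark applies to your reserve strategy (finding a $2$-dimensional $S_2\subseteq S$ with $(S_2,T)$ critical). Even granting the critical hyperplane, the ``forcing'' of the extra generator $v$ is not immediate: you must rule out all $v$ with $\dim_{\F_q}\bigl(v\langle 1,\ldots,a^{s-1}\rangle_{\fq}\cap\langle 1,\ldots,a^{s+m-3}\rangle_{\fq}\bigr)=s-1$ other than those extending the progression at one of its two ends, which is itself a Vosper-type classification. The actual proof in \cite{BSZ2017} gets around this via a nontrivial descent exploiting, among other things, the observation (used in reverse in Lemma \ref{lem:boundpointsr} of this paper) that $(S,T)$ critical implies $(\langle ST\rangle_{\fq}^{\perp},T)$ critical; your outline correctly isolates the easy half but leaves the crux unproved.
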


The first point of Theorem \ref{teo:bachocserrazemor} can be also seen as the Cauchy-Davenport analogue and, for this reason, we will call \textbf{critical pairs} the couples $(S,T)$ satisfying the equality in the first point of Theorem \ref{teo:bachocserrazemor} with \[\dim_{\F_q}(\langle S T \rangle_{\F_q})= \dim_{\F_q}(S)+\dim_{\F_q}(T)-1.\]
So, Theorem \ref{teo:bachocserrazemor2} classifies the critical pairs under the assumptions that $\dim_{\F_q}(\langle S T\rangle_{\fq}) \leq n-2$ and $n$ is prime.
These results allow us to prove the classification result on minimum size linear sets in Case (B).

However, we point out that the critical pairs of subspaces are related to linear sets admitting a certain number of points with a fixed weight. When $n$ is not prime, this connection allows us to present new examples of critical pairs of subspaces for finite fields extensions which are different from those in Theorem \ref{teo:bachocserrazemor2}, arising from the new examples of minimum size linear sets.

Finally, we conclude the paper by giving a classification result for critical pairs in extension fields $L/F$ in which one of the subspaces has dimension two (see Proposition \ref{prop:extalg}).

\section{Preliminaries}

We recall some definitions and results from linear sets theory and some properties of the trace function.

\subsection{Linear sets}

In this paper we mainly consider $\fq$-linear sets in the projective line, so in this section we recall some useful definitions and results regarding linear sets in $\PG(1,q^n)$.
Let $V$ be a $2$-dimensional $\fqn$-vector space and let $\Lambda=\PG(V,\fqn)=\PG(1,q^n)$. 
Let $U \neq \{0\}$ be an $\fq$-subspace of $V$, then the set
\[ L_U=\{\langle \mathbf{u} \rangle_{\fqn} \colon \mathbf{u}\in U\setminus\{\mathbf{0}\}\} \]
is said to be an $\fq$-\textbf{linear set} of rank $\dim_{\fq}(U)$.
The rank of $L_U$ will also be denoted by $\mathrm{Rank}(L_U)$.

Regarding the linearity of a linear set, more attention has to be paid as pointed out by Jena and Van de Voorde in \cite{DVdV2}.

\begin{definition}\cite[Definition 1.1]{DVdV2}
An $\F_{q}$-linear set, defined by an  $\F_{q}$--vector space $U$ is  an \textbf{$\F_{q^s}$-linear set} if $U$ is also an $\F_{q^s}$-vector space.
\end{definition}

\begin{definition}\cite[Definition 1.2]{DVdV2}
A \textbf{strictly} $\F_{q^s}$-linear set $L_U$ is an $\F_{q^s}$-linear set such that $U$ is not an $\F_{q^i}$-subspace of $V$ for any $i>s$. 
\end{definition}

The maximum field of linearity of a strictly $\F_{q^s}$-linear set is $\F_{q^s}$; see \cite{CsMP}.

\begin{definition}\cite[Definition 1.2]{DVdV2}
An $\F_q$-linear set $L_U$ has \textbf{geometric field of linearity} $\F_{q^s}$ if there exists an $\F_{q^s}$-linear set $L_W$ with $L_U=L_W$.
\end{definition}

Another important notion is the following. The \textbf{weight} of a point $P=\langle \mathbf{v}\rangle_{\fqn} \in \Lambda$ in $L_U$ is defined as $\dim_{\fq}(U \cap \langle \mathbf{v}\rangle_{\fqn})$.
Denote by $N_i$ the number of points of $\Lambda$ having weight $i\in \{0,\ldots,k\}$ in $L_U$.
We say that an $\fq$-linear set $L_U$ of rank $k$ has \textbf{weight spectrum} (with respect to $U$) $(i_1,\ldots,i_t)$ with $1\leq i_1 <i_2 < \ldots <i_t\leq k$ if for every $P \in L_U$ 
\[ w_{L_U}(P) \in \{i_1,\ldots,i_t\} \]
and each of these integers $i_j$ occurs as the weight of at least one point of $L_U$.
The \textbf{weight distribution} of $L_U$ (with respect to $U$) is $(N_{i_1},\ldots,N_{i_t})$.

\begin{remark} \label{rk:inequivalent}
It is worth to mention that if $U$ and $W$ are two $\fq$-subspaces of $V$ such that $L_U$ and $L_W$ have a distinct weight spectrum then $U$ and $W$ cannot be $\mathrm{\Gamma L}(2,q^n)$-equivalent.
\end{remark}

The $N_i$'s satisfy the following relations:
\begin{equation}\label{eq:card}
    |L_U| \leq \frac{q^k-1}{q-1},
\end{equation}
\begin{equation}\label{eq:pesicard}
    |L_U| =N_1+\ldots+N_k,
\end{equation}
\begin{equation}\label{eq:pesivett}
    N_1+N_2(q+1)+\ldots+N_k(q^{k-1}+\ldots+q+1)=q^{k-1}+\ldots+q+1.
\end{equation}
Moreover, the following holds
\begin{equation}\label{eq:wpointsrank}
    w_{L_U}(P)+w_{L_U}(Q)\leq \mathrm{Rank}(L_U),
\end{equation}
for any $P,Q \in \mathrm{PG}(1,q^n)$ with $P\ne Q$.

Linear sets attaining the bound in \eqref{eq:card} are called \textbf{scattered}, originally introduced in \cite{BL2000}.
An $i$-\textbf{club of rank $k$} in $\PG(1,q^n)$ is an $\F_{q}$-linear set of rank $k$ in $\PG(1,q^n)$ such that one point has weight $i$ and all the others have weight one. If $k=n$, then we will simply call it an $i$-club.
By \eqref{eq:pesicard} and \eqref{eq:pesivett}, we have that an $i$-club of rank $k$ has size $q^{k-1}+\ldots+q^i+1$. These linear sets are associated with special type of arcs known as \textbf{KM-arcs} introduced by Korchm\'aros and Mazzocca in \cite{KM}; see \cite[Theorem 2.1]{DeBoeckVdV2016} for the connection.

As proved in \cite[Theorem 1.2]{DeBeuleVdV} (and for $k=n$ in \cite[Lemma 2.2]{BoPol}), an $\fq$-linear set $L_U$ of rank $k$ in $\PG(1,q^n)$ with at least one point of weight one has at least $q^{k-1}+1$ points.
A such linear set with exactly this number of points is said to have \textbf{minimum size}.
Examples have been recently found by Jena and Van de Voorde, extending the examples in \cite{BoPol} and in \cite{LunPol2000}.

\begin{theorem}\cite[Theorem 2.7]{DVdV}\label{th:constructionVdV}
Let $\lambda\in \fqn \setminus \fq$ be an element generating a degree $s$-extension of $\fq$ and 
\[ L=\{ \langle (\alpha_0+\alpha_1 \lambda+\ldots+\alpha_{t_1-1}\lambda^{t_1-1},\beta_0+\beta_1 \lambda+\ldots+\beta_{t_2-1}\lambda^{t_2-1}) \rangle_{\fqn} \colon \alpha_i,\beta_i \in \fq,\,\]\[\text{not all zero},\, 1 \leq t_1,t_2, t_1+t_2 \leq s+1   \}. \]
Then $L$ is an $\fq$-linear set of $\PG(1,q^n)$ of rank $k=t_1+t_2$ with $q^{k-1}+1$ points.
Let $t_1\leq t_2$, then 
\begin{itemize}
\item the point $\langle (0,1)\rangle_{\fqn}$ has weight $t_2$;
\item there are $q^{t_2-t_1+1}$ points of weight $t_1$ different from $\langle (0,1)\rangle_{\fqn}$;
\item there are $q^{k-2i+1}-q^{k-2i-1}$ points of weight $i \in \{1,\ldots, t_1-1\}$.
\end{itemize}
\end{theorem}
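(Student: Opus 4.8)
The plan is to realize $L$ as the linear set $L_U$ with $U = A \times B$, where $A = \langle 1, \lambda, \dots, \lambda^{t_1-1}\rangle_{\fq}$ and $B = \langle 1, \lambda, \dots, \lambda^{t_2-1}\rangle_{\fq}$ are $\fq$-subspaces of $\F_{q^s} = \fq(\lambda) \subseteq \fqn$. Since $1, \lambda, \dots, \lambda^{s-1}$ are $\fq$-independent and $t_1, t_2 \le s$ (from $t_1+t_2\le s+1$), the spaces $A, B$ have dimensions $t_1, t_2$, so $\dim_{\fq} U = t_1 + t_2 = k$ and $\mathrm{Rank}(L) = k$. I would first record that the weight of $\langle (0,1)\rangle_{\fqn}$ equals $\dim_{\fq}(U \cap \langle (0,1)\rangle_{\fqn}) = \dim_{\fq}(\{0\}\times B) = t_2$. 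Every other point can be written as $\langle (1,z)\rangle_{\fqn}$, and its weight equals $w(z):=\dim_{\fq}\{a \in A : za \in B\}$; moreover if $w(z)>0$ then $z = (za)/a \in \F_{q^s}$. Thus the whole problem reduces to computing $w(z)$ for $z \in \F_{q^s}$ and counting the slopes by weight.

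The crucial observation will be that, because $t_1 + t_2 - 1 \le s$, any product of an element of $A$ (degree $<t_1$) by an element of $B$ (degree $<t_2$) is represented by a polynomial of degree at most $t_1+t_2-2<s$, hence undergoes no reduction modulo the minimal polynomial of $\lambda$. Consequently identities such as $a_0\,(za) = b_0\,a$ that hold in $\F_{q^s}$ hold already in the UFD $\fq[X]$. For a slope $z$ with $w(z)>0$ I would write it in lowest terms as $z = b_0/a_0$ with $a_0$ monic and $\gcd(a_0,b_0)=1$; since $z=b/a$ for some nonzero $a\in A,\ b\in B$, one gets $\deg a_0 \le t_1-1$ and $\deg b_0\le t_2-1$. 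For $a\in A$, the condition $za\in B$ becomes $a_0\mid a$, so $a = h\,a_0$ and $za = h\,b_0$; the constraints $\deg a < t_1$ and $\deg(za) < t_2$ then read $\deg h \le \min(t_1-\deg a_0,\, t_2-\deg b_0)-1$. This gives the exact formula
\[
w(z) = \min(t_1-\deg a_0,\ t_2-\deg b_0),
\]
so that $w(z)\ge i$ if and only if $\deg a_0 \le t_1-i$ and $\deg b_0 \le t_2-i$.

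To finish I would count $M_{\ge i}$, the number of points of weight at least $i$, for $1\le i\le t_1$. As reduced fractions with $a_0$ monic are unique, the finite slopes with $w(z)\ge i$ biject with coprime pairs $(a_0,b_0)$, $a_0$ monic, $\deg a_0\le t_1-i$, $\deg b_0\le t_2-i$. Grouping by $r=\deg a_0$ and using that for monic $a_0$ of degree $r$ the number of $b_0$ of degree $\le t_2-i$ coprime to $a_0$ equals $\Phi(a_0)\,q^{(t_2-i)+1-r}$ (valid since $t_2-i\ge t_1-i\ge r$, with $\Phi$ the polynomial Euler totient), together with $\sum_{a_0\ \mathrm{monic},\ \deg=r}\Phi(a_0) = q^{2r}-q^{2r-1}$ for $r\ge 1$, a telescoping sum gives $\sum_{r=0}^{t_1-i} q^{-r}\sum_{\deg=r}\Phi = q^{t_1-i}$, hence $q^{(t_1-i)+(t_2-i)+1}=q^{k-2i+1}$ finite slopes. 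Adding $\langle (0,1)\rangle_{\fqn}$ (weight $t_2\ge i$) yields $M_{\ge i} = q^{k-2i+1}+1$ for $1\le i\le t_1$, while $M_{\ge t_1+1}$ reduces to $\langle(0,1)\rangle_{\fqn}$ when $t_1<t_2$. Taking $N_i = M_{\ge i}-M_{\ge i+1}$ then produces $N_i = q^{k-2i+1}-q^{k-2i-1}$ for $1\le i\le t_1-1$ and $q^{t_2-t_1+1}$ finite points of weight $t_1$, and the telescoping total $\sum_i N_i = q^{k-1}+1$ confirms the minimum size.

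The main obstacle is precisely this last count. Extracting $q^{k-2i+1}$ needs both the degree bound $t_1+t_2-1\le s$ (which turns field arithmetic into polynomial arithmetic, without which distinct coprime pairs could collapse to one slope) and the totient identity for $\fq[X]$; the telescoping that collapses the weighted sum of $\Phi$ to the single power $q^{t_1-i}$ is the delicate computational heart of the argument.
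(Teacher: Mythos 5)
The paper does not actually prove this statement: it is imported verbatim from \cite[Theorem 2.7]{DVdV}, so there is no in-paper argument to measure yours against. Your blind proof is correct and complete. The key steps all check out: realizing $L$ as $L_{A\times B}$ with $A=\langle 1,\dots,\lambda^{t_1-1}\rangle_{\fq}$, $B=\langle 1,\dots,\lambda^{t_2-1}\rangle_{\fq}$; reducing the weight of $\langle(1,z)\rangle_{\fqn}$ to $\dim_{\fq}\{a\in A: za\in B\}$; using the degree bound $t_1+t_2-2\le s-1$ to transfer all identities into $\fq[X]$ (this is exactly what makes the reduced fraction $b_0/a_0$ well defined, forces $a_0\mid a$, and guarantees that distinct coprime pairs give distinct slopes); the resulting formula $w(z)=\min(t_1-\deg a_0,\,t_2-\deg b_0)$; and the count via $\sum_{\deg a_0=r,\ \mathrm{monic}}\Phi(a_0)=q^{2r}-q^{2r-1}$ together with the telescoping sum yielding $q^{k-2i+1}$ points of weight at least $i$ off $\langle(0,1)\rangle_{\fqn}$. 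The boundary cases are also handled correctly: the slope $z=0$ corresponds to the coprime pair $(1,0)$ and is counted once, and when $t_1=t_2$ the point $\langle(0,1)\rangle_{\fqn}$ is kept separate so that the count of weight-$t_1$ points \emph{different from} $\langle(0,1)\rangle_{\fqn}$ is still $q^{t_2-t_1+1}$. It is worth observing that your identification of the weight-$r$ points is a concrete instance of the paper's Theorem \ref{prop:numberopointsr} (with the two coordinates swapped), and your gcd analysis is precisely what evaluates the intersection $a_1^{-1}S\cap\cdots\cap a_r^{-1}S$ appearing there when both subspaces have polynomial bases in $\lambda$.
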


The examples of linear sets in Theorem \ref{th:constructionVdV} admits two points with complementary weights, that is the sum of the weights of such points equals the rank of the linear set. These type of linear sets have been recently investigated in \cite{NPSZ2021}. 
We recall a useful result from \cite{NPSZ2021}, regarding the number of points of  a linear set with two points with complementary weights.

\begin{proposition}\cite[Proposition 3.1]{NPSZ2021}
Let $L_U$ be an $\fq$-linear set with two points with complementary weights, namely $k-r$ and $r$. If there exists one point of weight one in $L_U$ then
\begin{equation}\label{eq:bound}
  q^{k-1}+1  \leq |L_U|\leq q^{k-1}+\ldots+q^{\max\{k-r,r\}}-q^{\min\{k-r,r\}-1}-\ldots-q+1.
\end{equation}
\end{proposition}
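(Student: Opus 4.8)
The plan is to derive both inequalities directly from the three numerical identities governing the weight distribution, namely \eqref{eq:pesicard} and \eqref{eq:pesivett}, with the existence of the two complementary-weight points supplying the extra constraints. To lighten notation I would set $\sigma_i=q^{i-1}+\cdots+q+1=\tfrac{q^i-1}{q-1}$, so that \eqref{eq:pesicard} reads $|L_U|=\sum_{i=1}^k N_i$ and \eqref{eq:pesivett} reads $\sum_{i=1}^k N_i\sigma_i=\sigma_k$.

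The lower bound is immediate and uses only the hypothesis that $L_U$ contains a point of weight one: since $L_U$ has rank $k$, \cite[Theorem 1.2]{DeBeuleVdV} gives $|L_U|\geq q^{k-1}+1$. For the upper bound, the idea is to measure the defect from the scattered bound by subtracting the two identities above. Because $\sigma_1-1=0$, this yields
\[
\sigma_k-|L_U|=\sum_{i=1}^k N_i(\sigma_i-1)=\sum_{i=2}^k N_i(\sigma_i-1),
\]
and each coefficient $\sigma_i-1=q^{i-1}+\cdots+q$ is non-negative. At this point the two points of complementary weights $k-r$ and $r$ enter as lower bounds on the $N_i$: if $k-r\neq r$ they are distinct points of distinct weights, forcing $N_{k-r}\geq1$ and $N_r\geq1$, while if $k-r=r$ they are two distinct points of the same weight, forcing $N_r\geq2$. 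In every case, discarding all the remaining non-negative terms gives $\sum_{i=2}^k N_i(\sigma_i-1)\geq(\sigma_{k-r}-1)+(\sigma_r-1)$, hence $|L_U|\leq\sigma_k-\sigma_{k-r}-\sigma_r+2$.

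The final step is a routine rewriting to match the stated form: putting $a=\max\{k-r,r\}$ and $b=\min\{k-r,r\}$, so that $a+b=k$ and $\{\sigma_{k-r},\sigma_r\}=\{\sigma_a,\sigma_b\}$, one has $\sigma_k-\sigma_a=q^{k-1}+\cdots+q^{a}$ and $1-\sigma_b=-(q^{b-1}+\cdots+q)$, whence $\sigma_k-\sigma_{k-r}-\sigma_r+2$ is exactly the right-hand side of \eqref{eq:bound}. I do not expect a serious obstacle here; the only place needing care is the bookkeeping of the forced contribution in the degenerate cases $k-r=r$ and $r=1$, where $\sigma_r-1=0$ makes a term vanish, but the same estimate goes through unchanged. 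It is worth noting that the upper bound uses nothing beyond the presence of the two complementary-weight points, so the weight-one hypothesis is needed only for the lower inequality.
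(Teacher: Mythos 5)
Your argument is correct. There is nothing in the paper to compare it against: the proposition is imported verbatim from \cite{NPSZ2021} and no proof is reproduced here, so your derivation stands on its own. It is sound: the lower bound is exactly the De Beule--Van de Voorde minimum-size bound invoked via the weight-one point, and the upper bound follows by subtracting \eqref{eq:pesicard} from \eqref{eq:pesivett}, discarding all nonnegative terms except the forced contributions $N_{k-r}\geq 1$ and $N_r\geq 1$ (or $N_r\geq 2$ when $k-r=r$), and the resulting quantity $\sigma_k-\sigma_{\max\{k-r,r\}}-\sigma_{\min\{k-r,r\}}+2$ telescopes to the right-hand side of \eqref{eq:bound}. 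Your closing remark that the weight-one hypothesis is needed only for the lower inequality is also accurate.
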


A linear set $L_U$ with complementary weights $k-r$ and $r$,  with $k-r \geq r$, will be said of \textbf{type} $(k-r,r)$. 
When the upper bound in \eqref{eq:bound} is attained the corresponding linear sets have interesting associated codes; see \cite{NPSZ2021code}.
In this paper we deal with linear sets attaining the lower bound in \eqref{eq:bound}.

We refer to \cite{LavVdV} and \cite{Polverino} for comprehensive references on linear sets.

\subsection{Trace properties}

In this subsection we recall some properties of trace function which will be very important to study the equivalence among $\fq$-subspaces and for the connection between linear sets and critical pairs.
The trace function from $\fqn$ over $\fq$ is defined as
\[
\mathrm{Tr}_{q^n/q}: a \in \F_{q^n} \mapsto \sum_{i=0}^{n-1} a^{q^i} \in \F_q,
\]
which is a linear map and it also defines a nondegenerate symmetric bilinear form as follows:
\[
(a,b) \in \F_{q^n} \times \F_{q^n} \mapsto \mathrm{Tr}_{q^n/q}(ab) \in \F_q.
\]

So, for any subset $S$ of $\fqn$ we can define the orthogonal complement as
\[ S^\perp=\{a \in \fqn \colon \mathrm{Tr}_{q^n/q}(ab)=0,\,\,\,\forall b \in S \}. \]

Two ordered $\F_{q}$-bases $\mathcal{B}=(\xi_0,\ldots,\xi_{n-1})$ and $\mathcal{B}^*=(\xi_0^*,\ldots,\xi_{n-1}^*)$ of $\F_{q^n}$ are said to be \textbf{dual bases} if $\mathrm{Tr}_{q^n/q}(\xi_i \xi_j^*)= \delta_{ij}$, for $i,j\in\{0,\ldots,n-1\}$, where $\delta_{ij}$ is the Kronecker symbol. It is well known that for any $\F_q$-basis $\mathcal{B}=(\xi_0,\ldots,\xi_{n-1})$ there exists a unique dual basis $\mathcal{B}^*=(\xi_0^*,\ldots,\xi_{n-1}^*)$ of $\mathcal{B}$, see e.g.\ \cite[Definition 2.30]{lidl_finite_1997}. 

\begin{lemma}\label{cor:dualbasis}\cite[Corollary 2.7]{NPSZ2021}
Let $\lambda \in \fqn$ such that $\mathcal{B}=(1,\lambda,\ldots,\lambda^{n-1})$ is an ordered $\fq$-basis of $\fqn$.
Let $f(x)=a_0+a_1x+\ldots+a_{n-1}x^{n-1}+x^n$ be the minimal polynomial of $\lambda$ over $\fq$. 
Then the dual basis $\mathcal{B}^*$ of $\mathcal{B}$ is 
\[ \mathcal{B}^*=(\delta^{-1}\gamma_0,\ldots,\delta^{-1}\gamma_{n-1}), \]
where $\delta=f'(\lambda)$ and $\gamma_i=\sum_{j=1}^{n-i} \lambda^{j-1}a_{i+j}$, for every $i \in \{0,\ldots,n-1\}$. 
\end{lemma}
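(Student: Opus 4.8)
The plan is to recognise the elements $\gamma_i$ as the coefficients of the polynomial $f(x)/(x-\lambda)$ and then to verify the defining trace--orthogonality relations of a dual basis directly, using the classical Lagrange interpolation formula over the splitting field of $f$.

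First I would rewrite each $\gamma_i$. Since $f(\lambda)=0$ and $f$ is monic of degree $n$, a finite--difference computation gives
\[
\frac{f(x)}{x-\lambda}=\frac{f(x)-f(\lambda)}{x-\lambda}=\sum_{m=1}^{n}a_m\sum_{l=0}^{m-1}\lambda^{m-1-l}x^{l},
\]
and collecting the coefficient of $x^i$ (that is, taking $l=i$, so $m\geq i+1$, and setting $j=m-i$) yields exactly $\gamma_i=\sum_{j=1}^{n-i}\lambda^{j-1}a_{i+j}$. Thus $f(x)/(x-\lambda)=\sum_{i=0}^{n-1}\gamma_i x^i$. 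I would also record that $\delta=f'(\lambda)\neq 0$: as $f$ is irreducible over the perfect field $\fq$ it is separable, so $\gcd(f,f')=1$ and $f'(\lambda)\neq 0$, which guarantees that $\delta^{-1}$ exists and that the $n$ conjugates of $\lambda$ are distinct.

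Next, let $\lambda_1=\lambda,\lambda_2=\lambda^{q},\ldots,\lambda_n=\lambda^{q^{n-1}}$ be the roots of $f$, and apply Lagrange interpolation to the polynomial $x^i$ of degree less than $n$, using the identities $\prod_{j\neq k}(x-\lambda_j)=f(x)/(x-\lambda_k)$ and $\prod_{j\neq k}(\lambda_k-\lambda_j)=f'(\lambda_k)$:
\[
x^i=\sum_{k=1}^{n}\lambda_k^{\,i}\,\frac{f(x)}{(x-\lambda_k)\,f'(\lambda_k)}.
\]
Applying the Frobenius $x\mapsto x^{q^{k-1}}$ to the expansion above (and using that the $a_m$ lie in $\fq$) gives $f(x)/(x-\lambda_k)=\sum_{m=0}^{n-1}\gamma_m^{\,q^{k-1}}x^m$. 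Comparing the coefficient of $x^m$ on both sides then yields
\[
\delta_{im}=\sum_{k=1}^{n}\frac{\lambda_k^{\,i}\,\gamma_m^{\,q^{k-1}}}{f'(\lambda_k)},
\]
where $\delta_{im}$ is the Kronecker symbol (not to be confused with $\delta=f'(\lambda)$).

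Finally I would recognise the right--hand side as a trace. Because $f'$ has coefficients in $\fq$, one has $f'(\lambda_k)=f'(\lambda)^{q^{k-1}}=\delta^{\,q^{k-1}}$ and $\lambda_k^{\,i}=(\lambda^i)^{q^{k-1}}$, so each summand equals $\left(\lambda^i\,\delta^{-1}\gamma_m\right)^{q^{k-1}}$ and the sum over $k$ is precisely $\mathrm{Tr}_{q^n/q}(\lambda^i\,\delta^{-1}\gamma_m)$. Hence $\mathrm{Tr}_{q^n/q}(\lambda^i\cdot\delta^{-1}\gamma_m)=\delta_{im}$ for all $i,m\in\{0,\ldots,n-1\}$, which is exactly the assertion that $(\delta^{-1}\gamma_0,\ldots,\delta^{-1}\gamma_{n-1})$ is the dual basis of $\mathcal{B}=(1,\lambda,\ldots,\lambda^{n-1})$. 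The main point to handle with care is the bookkeeping in this last step: correctly tracking the Frobenius action on $\gamma_m$ and on $f'(\lambda)$ so that the coefficient--wise identity reassembles into a single trace.
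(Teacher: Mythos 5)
The paper offers no proof of this lemma at all: it is imported verbatim as \cite[Corollary 2.7]{NPSZ2021}, so there is nothing internal to compare your argument against. Your proof is correct and is the classical derivation of the dual of a polynomial basis (it is essentially the argument behind \cite[Definition 2.30 and the surrounding discussion]{lidl_finite_1997}). All the individual steps check out: the identification $f(x)/(x-\lambda)=\sum_{i=0}^{n-1}\gamma_i x^i$ follows from the telescoping of $x^m-\lambda^m$ exactly as you write it (with $a_n=1$ giving $\gamma_{n-1}=1$); separability of the irreducible $f$ over the perfect field $\fq$ gives $\delta=f'(\lambda)\neq 0$ and $n$ distinct conjugates; Lagrange interpolation of $x^i$ ($i\leq n-1$) at the roots $\lambda_1,\ldots,\lambda_n$ is legitimate since $\deg x^i<n$; and the Frobenius bookkeeping $f'(\lambda_k)=\delta^{q^{k-1}}$, $\gamma_m\mapsto\gamma_m^{q^{k-1}}$ (valid because $f$ and $f'$ have coefficients in $\fq$) correctly reassembles the coefficient identity into $\mathrm{Tr}_{q^n/q}(\lambda^i\,\delta^{-1}\gamma_m)=\delta_{im}$, which is precisely the dual-basis condition. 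The only thing your write-up buys beyond the paper is self-containedness; since the authors delegate the proof to an external reference, your argument is a complete and acceptable substitute.
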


\begin{remark}\label{rk:span}
By the above lemma, we also have the following equalities:
\[ \langle \delta^{-1} \gamma_{\ell},\ldots, \delta^{-1} \gamma_{n-1} \rangle_{\fq}= \delta^{-1} \langle 1,\ldots, \lambda^{n-\ell-1} \rangle_{\fq}, \]
for any $\ell$.
\end{remark}

As a consequence of Lemma \ref{cor:dualbasis}, we may prove that dual of an $\fq$-subspace in $\F_{q^n}$ generated by consecutive powers of a generator $\lambda$ of $\F_{q^n}$ over $\F_q$ is spanned by consecutive powers of $\lambda$, up to a nonzero scalar in $\F_{q^n}$, as well.

\begin{proposition} \label{prop:dualwithdual}
Let $\lambda \in \fqn$ such that $\mathcal{B}=(1,\lambda,\ldots,\lambda^{n-1})$ is an ordered $\fq$-basis of $\fqn$. Let $f(x)=a_0+a_1x+\ldots+a_{n-1}x^{n-1}+x^n$ be the minimal polynomial of $\lambda$ over $\fq$. Let $W=\langle 1,\lambda,\ldots,\lambda^{\ell-1} \rangle_{\F_q}$, with $\ell \in \{1,\ldots,n-1\}$. Then $ W^\perp=\delta^{-1}\langle 1,\lambda,\ldots,\lambda^{n-\ell-1} \rangle_{\F_q}$, where $\delta =f'(\lambda)$. 
\end{proposition}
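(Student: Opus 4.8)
The plan is to exploit the standard duality between a basis and its dual basis with respect to the trace form, together with the explicit description of the dual basis of $\mathcal{B}$ provided by Lemma \ref{cor:dualbasis}. The key observation is a general fact: if $\mathcal{B}=(\xi_0,\ldots,\xi_{n-1})$ is an $\fq$-basis of $\fqn$ with dual basis $\mathcal{B}^*=(\xi_0^*,\ldots,\xi_{n-1}^*)$, then the orthogonal complement (with respect to the trace form) of the subspace spanned by the first $\ell$ elements of $\mathcal{B}$ is exactly the subspace spanned by the last $n-\ell$ elements of $\mathcal{B}^*$.

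First I would establish this general fact. Writing $W=\langle \xi_0,\ldots,\xi_{\ell-1}\rangle_{\fq}$, an element $a\in\fqn$ lies in $W^\perp$ if and only if $\mathrm{Tr}_{q^n/q}(a\xi_i)=0$ for all $i\in\{0,\ldots,\ell-1\}$. Expanding $a=\sum_{j=0}^{n-1}c_j\xi_j^*$ in the dual basis and using the defining relations $\mathrm{Tr}_{q^n/q}(\xi_i\xi_j^*)=\delta_{ij}$ gives $\mathrm{Tr}_{q^n/q}(a\xi_i)=c_i$. Hence $a\in W^\perp$ precisely when $c_0=\cdots=c_{\ell-1}=0$, that is, $W^\perp=\langle \xi_\ell^*,\ldots,\xi_{n-1}^*\rangle_{\fq}$. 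This is an honest equality obtained from an ``if and only if'', and the dimension count $\dim_{\fq}W^\perp=n-\ell$ confirms that nothing is lost.

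Then I would specialize to $\xi_i=\lambda^i$, so that $W=\langle 1,\lambda,\ldots,\lambda^{\ell-1}\rangle_{\fq}$ is the subspace in the statement. By Lemma \ref{cor:dualbasis} the dual basis elements are $\xi_i^*=\delta^{-1}\gamma_i$ with $\delta=f'(\lambda)$, so the previous step yields $W^\perp=\langle \delta^{-1}\gamma_\ell,\ldots,\delta^{-1}\gamma_{n-1}\rangle_{\fq}$. Finally, Remark \ref{rk:span} rewrites this span as $\delta^{-1}\langle 1,\lambda,\ldots,\lambda^{n-\ell-1}\rangle_{\fq}$, which is exactly the claimed form.

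I do not expect any serious obstacle here: the only genuine content beyond bookkeeping is the general duality fact in the second step, and that is a routine linear-algebra computation once the dual-basis relations are in hand. The role of Lemma \ref{cor:dualbasis} and Remark \ref{rk:span} is simply to convert the abstract answer $\langle \xi_\ell^*,\ldots,\xi_{n-1}^*\rangle_{\fq}$ into the explicit consecutive-powers form required by the statement, so the proof is essentially an assembly of results already proved in the excerpt.
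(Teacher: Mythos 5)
Your proposal is correct and follows essentially the same route as the paper: both rely on Lemma \ref{cor:dualbasis} to identify the dual basis elements $\delta^{-1}\gamma_i$, on the orthogonality relations $\mathrm{Tr}_{q^n/q}(\xi_i\xi_h^*)=\delta_{ih}$ to place $\delta^{-1}\gamma_\ell,\ldots,\delta^{-1}\gamma_{n-1}$ in $W^\perp$, and on Remark \ref{rk:span} to rewrite that span as $\delta^{-1}\langle 1,\lambda,\ldots,\lambda^{n-\ell-1}\rangle_{\fq}$. The only cosmetic difference is that you obtain the equality directly from the ``if and only if'' coordinate computation, whereas the paper proves one inclusion and concludes by the dimension count $\dim_{\fq}(W^\perp)=n-\ell$.
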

\begin{proof} 
By Lemma \ref{cor:dualbasis}, the dual basis of $\mathcal{B}$ is $\mathcal{B}^*=(\delta^{-1}\gamma_0,\ldots,\delta^{-1}\gamma_{n-1})$, where $\delta=f'(\lambda)$ and $\gamma_i=\sum_{h=1}^{n-i} \lambda^{h-1}a_{i+h}$, for every $i \in \{0,\ldots,n-1\}$. In particular, we have that $\mathrm{Tr}_{q^n/q}(\lambda^i \delta^{-1}\gamma_h)=0$ for any $i \leq \ell-1$ and any $h \geq \ell$. This implies that $\delta^{-1}\gamma_h \in W^{\perp}$ and so, by Remark \ref{rk:span} we have $\delta^{-1}\langle 1,\lambda,\ldots,\lambda^{n-\ell-1}\rangle_{\F_q} =\delta^{-1}\langle \gamma_{\ell},\ldots,\gamma_{n-1}\rangle_{\F_q} \subseteq W^{\perp}$. Since $\dim_{\F_q}(W^{\perp})=n-\ell$, we have the desired equality. 
\end{proof}

\section{Classification of linear sets of minimum size}

In this section, we are going to study linear sets with two points with complementary weights. Let us start with the following auxiliary lemma, which extends \cite[Lemma 4]{BSZ2017} for finite fields extension.

\begin{lemma}\label{lemma:power}
Let $S$ be an $\fq$-subspace of $\fqn$ of dimension $k\geq2$ and let $\mu \in \fqn\setminus\fq$. Let $t=\dim_{\fq}(\fq(\mu))$.
\begin{itemize}
    \item [(a)] If $\dim_{\fq}(S\cap \mu S)=k$, then $S$ is an $\fq(\mu)$-subspace.
    \item [(b)] Suppose that $\dim_{\fq}(S\cap \mu S)=k-1$ and $t\geq k$. Then $S=b \langle 1,\mu,\ldots,\mu^{k-1}\rangle_{\fq}$, for some $b \in \fqn^*$ and $t \neq k$.
    \item[(c)] Suppose that $\dim_{\fq}(S\cap \mu S)=k-1$ and $t\leq k-1$. Write $k=t\ell+m$ with $m<t$, then $m>0$ and $S=\overline{S}\oplus b\langle 1,\mu,\ldots,\mu^{m-1}\rangle_{\fq}$, where $\overline{S}$ is an $\F_{q^t}$-subspace of dimension $\ell$, $b \in \fqn^*$ and $b \F_{q^t} \cap \overline{S}=\{0\}$.
    In particular, $t$ is a proper divisor of $n$.
\end{itemize}
\end{lemma}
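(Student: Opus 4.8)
The plan is to view multiplication by $\mu$ as an $\fq$-linear operator on $\fqn$ and to track how the subspaces $\mu^i S$ fit together. Since $\mu\notin\fq$, the field $\fq(\mu)=\F_{q^t}$ is an intermediate field, so $t\mid n$ and $t\geq 2$; moreover multiplication by $\mu$ is exactly scalar multiplication once $\fqn$ is regarded as an $\F_{q^t}$-vector space. Part (a) is then immediate: if $\dim_{\fq}(S\cap\mu S)=k$ then $\mu S=S$, so $S$ is invariant under multiplication by every power of $\mu$, hence under all of $\fq[\mu]=\F_{q^t}$, i.e.\ $S$ is an $\F_{q^t}$-subspace. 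For (b) and (c) I would introduce the two chains $V_j=\sum_{i=0}^{j}\mu^i S$ and $A_j=\bigcap_{i=0}^{j}\mu^{-i}S$.

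First I would prove concavity of the $V_j$: multiplication by $\mu$ induces a map $V_j/V_{j-1}\to V_{j+1}/V_j$ (well defined because $\mu V_{j-1}\subseteq V_j$ and $\mu V_j\subseteq V_{j+1}$, and surjective because $V_{j+1}=V_j+\mu V_j$), so the increments $\dim_{\fq}V_j-\dim_{\fq}V_{j-1}$ are non-increasing. Since $\dim_{\fq}(S\cap\mu S)=k-1$ forces $\dim_{\fq}V_1-\dim_{\fq}V_0=1$, these increments form a string $1,\dots,1,0,0,\dots$ with, say, $m$ ones. Hence the chain stabilises at $V_\infty=\langle S\rangle_{\F_{q^t}}$, which is $\mu$-invariant and therefore an $\F_{q^t}$-subspace, with $\dim_{\fq}V_\infty=k+m$ divisible by $t$. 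Passing to orthogonal complements and using $(\mu S)^\perp=\mu^{-1}S^\perp$ turns the intersection chain $A_j$ into the sum chain of $S^\perp$; the same concavity then shows the decrements $\dim_{\fq}A_{j-1}-\dim_{\fq}A_j$ are non-increasing, start at $1$ (mirroring $\dim_{\fq}(S\cap\mu S)=k-1$), and stabilise at the $\mu$-core $\overline S$ (the largest $\F_{q^t}$-subspace contained in $S$), with $\dim_{\fq}\overline S=t\ell$ for some integer $\ell$.

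The heart of the argument — and the step I expect to be the main obstacle — is to upgrade this ``one extra dimension in the span, one fewer in the core'' information into the explicit decomposition. First I would reduce to trivial core: since $\overline S$ is $\mu$-invariant and contained in $S$, a direct check gives $(S\cap\mu S)/\overline S=(S/\overline S)\cap\mu(S/\overline S)$ inside $V_\infty/\overline S$, so the defect $\dim_{\fq}S-\dim_{\fq}(S\cap\mu S)$ is preserved on passing to $S/\overline S$; it therefore suffices to treat an $\fq$-subspace of defect $1$ with trivial core that $\F_{q^t}$-spans its ambient space, and to prove that such a subspace has one-dimensional $\F_{q^t}$-span and equals a single cyclic block $b\langle 1,\mu,\dots,\mu^{m-1}\rangle_{\fq}$. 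This is the finite-field-extension analogue of \cite[Lemma 4]{BSZ2017}. The underlying mechanism is a $q$-polynomial obstruction: writing an $\fq$-linear endomorphism of $\F_{q^t}$ as $\sum_{i=0}^{t-1}a_i x^{q^i}$, the commutator $x\mapsto\phi(\mu x)-\mu\phi(x)$ has coefficients $a_i(\mu^{q^i}-\mu)$, so its $i=0$ term vanishes, whereas a rank-one map $x\mapsto\omega\,\mathrm{Tr}_{q^t/q}(\gamma x)$ has nonzero $i=0$ coefficient $\omega\gamma$; this incompatibility prevents the span from exceeding a single line. I would adapt the induction of \cite[Lemma 4]{BSZ2017} to carry this through in full.

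Finally I would assemble the cases from the decomposition $S=\overline S\oplus b\langle 1,\mu,\dots,\mu^{m-1}\rangle_{\fq}$ with $\dim_{\F_{q^t}}\overline S=\ell$ and $k=t\ell+m$. In case (b), $t\geq k$ forces $t\ell\leq k\leq t$, hence $\ell\in\{0,1\}$; $\ell=1$ would give $k=t+m>t\geq k$, impossible, so $\ell=0$, $\overline S=\{0\}$ and $S=b\langle 1,\mu,\dots,\mu^{k-1}\rangle_{\fq}$; since this is $\mu$-invariant exactly when $k=t$ (giving $S=b\F_{q^t}$ and $\dim_{\fq}(S\cap\mu S)=k$, contrary to hypothesis), necessarily $t\neq k$. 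In case (c), $t\leq k-1$, write $k=t\ell+m$ with $0\leq m<t$; the residual block has dimension $<t$ (otherwise it would be an $\F_{q^t}$-line and would have been absorbed into $\overline S$), so by uniqueness of the division its dimension equals $m$, and $m>0$ because $m=0$ would make $S=\overline S$ an $\F_{q^t}$-subspace with $\dim_{\fq}(S\cap\mu S)=k$. Since $\mu\notin\fq$ gives $t\geq2$ while $t\leq k-1\leq n-1$, the integer $t$ is a proper divisor of $n$, completing (c).
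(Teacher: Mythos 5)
Your setup is sound and, via duality, equivalent to the paper's: part (a) is handled identically, and your observation that the intersection chain $A_j=\bigcap_{i=0}^{j}\mu^{-i}S$ has non-increasing decrements starting at $1$ and stabilising at the $\mu$-core $\overline{S}$ (an $\F_{q^t}$-subspace of dimension $t\ell$) is exactly the information the paper extracts directly from the chain $S\cap\mu S\cap\cdots\cap\mu^{j}S$. The genuine gap is at what you yourself call the heart of the argument: you reduce to the claim that a defect-one subspace with trivial core lies on a single $\F_{q^t}$-line and is a cyclic block, but you do not prove it --- you defer to ``adapting the induction of \cite[Lemma 4]{BSZ2017}'', which is circular here, since the statement being proved \emph{is} the extension of that result to the case where $\fq$ is not algebraically closed in $\fqn$, and case (c) is precisely where the cited induction does not apply. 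The $q$-polynomial commutator mechanism you sketch does not fill the hole either: it concerns $\fq$-endomorphisms of $\F_{q^t}$ and rank-one maps, whereas the object at hand is an $\fq$-subspace of a possibly higher-dimensional $\F_{q^t}$-space that is not presented as the graph of such an endomorphism, and no argument is given connecting the vanishing of an $i=0$ coefficient to the dimension of the $\F_{q^t}$-span.

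The irony is that your chain analysis already contains everything needed, and the paper closes the argument with an elementary element chase rather than any linearized-polynomial machinery. With your notation, if $m$ is the number of unit decrements then $\dim_{\fq}(A_{m-1})=\dim_{\fq}(\overline{S})+1$; pick $a_0\in A_{m-1}\setminus\overline{S}$. Then $\mu^{i}a_0\in S$ for $i=0,\ldots,m-1$, so $a_0\langle 1,\mu,\ldots,\mu^{m-1}\rangle_{\fq}\subseteq S$. If $m\geq t$ this span equals $a_0\F_{q^t}$, a nonzero $\F_{q^t}$-subspace of $S$, hence contained in the core, contradicting $a_0\notin\overline{S}$; so $m<t$, the powers $1,\ldots,\mu^{m-1}$ are $\fq$-independent, $a_0\F_{q^t}\cap\overline{S}=\{0\}$ (it is an $\F_{q^t}$-subspace of the line $a_0\F_{q^t}$ that cannot be the whole line), and the count $\dim_{\fq}(\overline{S})+m=k$ forces $S=\overline{S}\oplus a_0\langle 1,\mu,\ldots,\mu^{m-1}\rangle_{\fq}$. (The paper runs the same device on $S\cap\mu S\cap\cdots\cap\mu^{j}S$, unwinding $a_0=\mu^{i}a_i$, and rules out $j>t$ by the minimality of $j$.) Your final case assembly is then fine, except for one small slip in (b): the subcase $\ell=1$, $m=0$ is not excluded by your inequality $k=t+m>t$; it must be ruled out by noting that it forces $S=\overline{S}=\mu S$, against the hypothesis $\dim_{\fq}(S\cap\mu S)=k-1$.
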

\begin{proof}
\textbf{(a)} Suppose that $\mu S=S$. Clearly, for any $s \in S\setminus\{0\}$, $\mu(s^{-1} S)=s^{-1}S$. So, up to multiply by the inverse of some element in $S$, we may assume that $1 \in S$. So that $\mu \in S$ and hence all the powers of $\mu$ are in $S$.
This implies that $\fq(\mu)\subseteq S$. If  $\alpha \in \fq(\mu)$ then $\alpha=c_0+c_1\mu+\ldots+c_{t-1}\mu^{t-1}$ for some $c_0,\ldots,c_{t-1} \in \fq$.
Hence, $\alpha s \in S$ for every $\alpha \in \fq(\mu)$ and $s \in S$, and so $S$ in an $\fq(\mu)$-subspace.\\
\textbf{(b)} Assume that $t\geq k$ and $\dim_{\fq}(S\cap \mu S)=k-1$. Consider  
\[S\cap \mu S\cap \mu^2 S=(S\cap \mu S)\cap \mu (S\cap \mu S)\subseteq S \cap \mu S,\] 
then $\dim_{\fq}(S\cap \mu S\cap \mu^2 S)\in \{ k-2,k-1\}$, since $S\cap \mu S$ and $\mu(S\cap \mu S)$ are both contained in $\mu S$.
Moreover, if $\dim_{\fq}(S\cap \mu S\cap \mu^2 S)= k-1$, then by (a) it follows that $S\cap \mu S$ is an $\F_{q^t}$-subspace of $\fqn$ and hence $k-1 \geq t$, a contradiction to the assumption that $t\geq k$.
So $\dim_{\fq}(S\cap \mu S\cap \mu^2 S)= k-2$. Arguing as above, we have 
\[ \dim_{\fq}(S\cap \ldots \cap \mu^{i} S)=k-i, \]
for any $i \in \{1,\ldots, k\}$.
In particular, $\dim_{\fq}(S\cap \ldots \cap \mu^{k-1} S)=1$. Thus, there exists a nonzero $a_0 \in S\cap \ldots \cap \mu^{k-1} S$ for which
\[ \mu^{k-1}a_{k-1}=\ldots=\mu a_1=a_0, \]
for some $a_{k-1},\ldots,a_1 \in S\setminus\{0\}$.
Therefore, $a_i=\mu^{k-1-i}a_{k-1}$ for any $i \in \{0,\ldots,k-2\}$. Hence,  $a_{k-1},a_{k-1}\mu,\ldots,a_{k-1}\mu^{k-1} \in S$ and they are $\fq$-linearly independent due to the fact that $t\geq k$, and  so 
\[S=\langle a_{k-1},a_{k-1}\mu,\ldots,a_{k-1}\mu^{k-1} \rangle_{\fq}=a_{k-1}\langle 1,\mu,\ldots,\mu^{k-1} \rangle_{\fq}.\]
Note that $t \neq k$, otherwise $S=a_{k-1}\F_{q^t}$ and so $S = \mu S$, a contradiction.\\
\noindent \textbf{(c)} Suppose now that $t<k$ and $\dim_{\fq}(S\cap \mu S)=k-1$. Consider $S\cap \mu S\cap \mu^2 S=(S\cap \mu S)\cap \mu (S\cap \mu S)\subseteq S \cap \mu S$. As in the previous case we have that $\dim_{\fq}(S\cap \mu S\cap \mu^2 S)\in \{k-2,k-1\}$.
If $\dim_{\fq}(S\cap \mu S\cap \mu^2 S)= k-1$ then $S\cap \mu S=\mu(S\cap \mu S)$. By (a) it follows that $S\cap \mu S$ is an $\F_{q^t}$-subspace and denote it by $\overline{S}$. In particular, $t \mid k-1$ and so $k=t\ell+1$. Let $s \in S \setminus \overline{S}$, then $S=\overline{S}\oplus \langle s\rangle_{\fq}$ and since $s\F_{q^t}\cap \overline{S}=\{0\}$, in this case the assertion is proved.
If $\dim_{\fq}(S\cap \mu S\cap \mu^2 S)= k-2$, then consider \[S\cap \mu S\cap \mu^2 S \cap \mu^3 S =(S\cap \mu S\cap \mu^2 S)\cap \mu(S\cap \mu S\cap \mu^2 S)\subseteq S\cap \mu S\cap \mu^2 S. \]
We have that $\dim_{\F_q}(S\cap \mu S\cap \mu^2 S \cap \mu^3 S) \in \{k-3,k-2\}$. We proceed by finite induction.
So, let $j$ be the minimum positive integer such that
\[ \dim_{\fq}(S\cap \ldots \cap \mu^{j} S)=k-j \]
and 
\[ \dim_{\fq}(S\cap \ldots \cap \mu^{j+1} S)=k-j. \]
In particular, 
\[ \dim_{\fq}(S\cap \ldots \cap \mu^{j-1} S)=k-j+1. \]
Let $\overline{S}=S\cap \ldots \cap \mu^{j} S$. Clearly, $\overline{S}=\mu \overline{S}$ and hence by (a) $\overline{S}$ is an $\F_{q^t}$-subspace. 
Since $\dim_{\fq}(\overline{S})=k-j$ and $\overline{S}$ is an $\F_{q^t}$-subspace, it follows that $t \mid k-j$, that is $k-j=\ell t$, for some non-negative integer $\ell$.
Since $S\cap \ldots \cap \mu^{j-1} S\supset \overline{S}$, there exists $a_0 \in (S\cap \ldots \cap \mu^{j-1} S) \setminus \overline{S}$ such that 
\[ S\cap \ldots \cap \mu^{j-1} S= \overline{S}\oplus \langle a_0\rangle_{\fq}. \]
Since $a_0 \in S\cap \ldots \cap \mu^{j-1} S$, then 
\begin{equation}\label{eq:ais} 
\mu^{j-1}a_{j-1}=\ldots=\mu a_1=a_0, 
\end{equation}
for some $a_{j-1},\ldots,a_1 \in S$. By \eqref{eq:ais} it follows that $a_i=\mu^{j-1-i} a_{j-1}$ for any $i \in \{0,\ldots,j-2\}$.
In particular, $a_{j-1},a_{j-1}\mu,\ldots,a_{j-1}\mu^{j-1} \in S$ and $a_{j-1}\notin \overline{S}$, otherwise $a_0 \in \overline{S}$. Note also that this implies that $a_{j-1} \F_{q^t}\cap \overline{S}=\{0\}$. 
We want to prove that $j\leq t$.
By contradiction, assume that $j> t$. It follows that $a_{j-1}\F_{q^t}=a_{j-1}\F_q(\mu)\subseteq S$ and by \eqref{eq:ais} $a_{j-1}\F_{q^t} \subseteq S \cap \mu S \cdots \cap \mu^{j-1}S$, and hence 
\[a_{j-1}\F_{q^t} \oplus \overline{S} \subseteq  S \cap \mu S \cdots \cap \mu^{j-1}S,\] 
i.e.\ $\dim_{\F_q}( S \cap \mu S \cdots \cap \mu^{j-1}S) \geq k-j+t>k-j+1$, which is a contradiction to the minimality of $j$.
Then $j\leq t$ and so $\dim_{\fq}(a_{j-1}\langle 1,\mu,\ldots,\mu^{j-1}\rangle_{\fq})=j$ and 
\[a_{j-1}\langle 1,\mu,\ldots,\mu^{j-1}\rangle_{\fq}\cap \overline{S}\subseteq a_{j-1}\F_{q^t}\cap \overline{S}=\{0\}.\]
Therefore, $S=\overline{S}\oplus a_{j-1}\langle 1,\mu,\ldots,\mu^{j-1}\rangle_{\fq}$. Finally, if $j=t$ then $S=\overline{S} \oplus a_{t-1} \F_{q^t}$ and $S\cap \mathbb{F}_{q^t}=\{0\}$, and so $S=\mu S$, a contradiction. 
\end{proof}

\begin{remark}
\cite[Lemma 4]{BSZ2017} deals with extension fields $L/F$ where $F$ is algebraically closed in $L$, which means that, if $L$ and $F$ are both finite, the degree of the extension $L/F$ is a prime.
In particular, when $n$ is a prime, then Lemma \ref{lemma:power} coincides with \cite[Lemma 4]{BSZ2017}.
\end{remark}

Up to the action of $\mathrm{PGL}(2,q^n)$ on $\PG(1,q^n)$, linear sets admitting two points with complementary weights are of the form described in the following proposition.

\begin{proposition}\label{prop:formls2}
Let $L_W$ be an $\fq$-linear set in $\PG(1,q^n)$ of rank $k \leq n$ admitting two points with complementary weights of type $(k-r,r)$.
Then $L_W$ is $\mathrm{PGL}(2,q^n)$-equivalent to $L_U$ where $U=S\times T$, where $S$ is a $(k-r)$-dimensional $\fq$-subspace of $\fqn$ and $T$ is an $r$-dimensional $\F_q$-subspace of $\F_{q^n}$, with $1 \in T$.
\end{proposition}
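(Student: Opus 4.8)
The plan is to reduce $L_W$ to the stated normal form by acting with a suitable element of $\mathrm{PGL}(2,q^n)$, exploiting that this group acts transitively on ordered pairs of distinct points of $\PG(1,q^n)$ and that its elements preserve weights. The hypothesis of \emph{complementary} weights enters only through the identity $(k-r)+r=k$, which is what will make a dimension count close.

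First I would fix the two points $P_1,P_2 \in L_W$ of weights $k-r$ and $r$. Since $P_1 \neq P_2$, their representative vectors are $\fqn$-linearly independent, so there is $\varphi \in \mathrm{GL}(2,q^n)$ with $\varphi(P_1)=\langle (1,0)\rangle_{\fqn}$ and $\varphi(P_2)=\langle (0,1)\rangle_{\fqn}$. Writing $U_0=\varphi(W)$, the map $\varphi$ is an $\fq$-linear bijection of $V$ which restricts, for each $\mathbf{v}$, to an isomorphism $W \cap \langle \mathbf{v}\rangle_{\fqn} \to U_0 \cap \langle \varphi(\mathbf{v})\rangle_{\fqn}$; hence $L_{U_0}=\varphi(L_W)$ and all weights are preserved. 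In particular $\langle(1,0)\rangle_{\fqn}$ has weight $k-r$ and $\langle(0,1)\rangle_{\fqn}$ has weight $r$ in $L_{U_0}$.

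Next I would extract the two coordinate intersections. Set $S'=U_0 \cap \langle(1,0)\rangle_{\fqn}$ and $T'=U_0 \cap \langle(0,1)\rangle_{\fqn}$; by the weights just computed these are $\fq$-subspaces of dimension $k-r$ and $r$, and they can be written $S'=S \times \{0\}$ and $T'=\{0\} \times T_0$ for $\fq$-subspaces $S,T_0 \subseteq \fqn$. The key (and essentially only) point is now a dimension count: since $\langle(1,0)\rangle_{\fqn} \cap \langle(0,1)\rangle_{\fqn}=\{\mathbf{0}\}$ we have $S' \cap T'=\{\mathbf{0}\}$, so $S' \oplus T' \subseteq U_0$ has dimension $(k-r)+r=k=\dim_{\fq}(U_0)$, forcing $U_0=S' \oplus T'=S \times T_0$.

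Finally I would normalize so that $1 \in T$. Since $r \geq 1$ the subspace $T_0$ is nonzero; picking any $t_0 \in T_0 \setminus \{0\}$ and applying the diagonal map $(x,y) \mapsto (x,t_0^{-1}y)$, which lies in $\mathrm{GL}(2,q^n)$ and fixes both $\langle(1,0)\rangle_{\fqn}$ and $\langle(0,1)\rangle_{\fqn}$, leaves $S$ unchanged and replaces $T_0$ by $T:=t_0^{-1}T_0$, an $r$-dimensional $\fq$-subspace containing $1$. The resulting $U=S \times T$ has the required form, and $L_W$ is $\mathrm{PGL}(2,q^n)$-equivalent to $L_U$. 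No step presents a genuine obstacle; the only thing to check with care is that the complementary-weight hypothesis yields the equality $(k-r)+r=k$, so that the dimension count closes and $U_0$ splits \emph{exactly} as the direct sum of its two coordinate intersections.
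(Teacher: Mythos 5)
Your proposal is correct and is essentially the argument the paper relies on: the paper simply cites \cite[Proposition 3.2]{NPSZ2021} for the decomposition $U=S\times T$ and then performs the same normalization $T\mapsto a^{-1}T$ to get $1\in T$. Your write-up just makes the underlying dimension count explicit (the two coordinate intersections meet trivially and their dimensions sum to $k=\dim_{\fq}(U_0)$, forcing $U_0=S\times T_0$), which is exactly the content of the cited result.
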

\begin{proof}
The proof follows by \cite[Proposition 3.2]{NPSZ2021} and from the fact that if $1 \notin T$ we can replace $T$ by $a^{-1}T$ for some $a \in T\setminus\{0\}$.
\end{proof}

From now on, we assume that the linear sets we are studying  are of the type $L_U$ as in Proposition \ref{prop:formls2}.

\begin{theorem} \label{prop:numberopointsr}
Let $U=S\times T$, where $S$ is a $(k-r)$-dimensional $\fq$-subspace of $\fqn$ with $r \leq k-r$ and $T$ is an $r$-dimensional $\F_q$-subspace of $\F_{q^n}$ and suppose that $T=\langle a_1,\ldots,a_r\rangle_{\fq}$ for some $a_1,\ldots,a_r \in \fqn$.
The set of points of weight $r$ in $L_U$ different from $\langle (1,0)  \rangle_{\fqn}$ is 
\[ \{ \langle (\xi,1)\rangle_{\fqn} \colon \xi \in a_1^{-1}S\cap \ldots \cap a_r^{-1} S \} \]
and its size is $q^j$ with $j=\dim_{\fq}(a_1^{-1}S\cap \ldots \cap a_r^{-1} S)$.
\end{theorem}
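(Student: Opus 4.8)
The plan is to parametrize every point of $\PG(1,q^n)$ other than $\langle(1,0)\rangle_{\fqn}$ as $\langle(\xi,1)\rangle_{\fqn}$ with $\xi\in\fqn$, and to read off its weight directly from the definition $w_{L_U}(P)=\dim_{\fq}(U\cap P)$. First I would note that the one-dimensional $\fqn$-subspace underlying $\langle(\xi,1)\rangle_{\fqn}$ is $\{(\gamma\xi,\gamma)\colon\gamma\in\fqn\}$, so that the $\fq$-linear map $\gamma\mapsto(\gamma\xi,\gamma)$ restricts to an isomorphism between $\{\gamma\in T\colon\gamma\xi\in S\}$ and $U\cap\langle(\xi,1)\rangle_{\fqn}$. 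Consequently, for $\xi\neq0$ the weight of $\langle(\xi,1)\rangle_{\fqn}$ equals $\dim_{\fq}(T\cap\xi^{-1}S)$, whereas for $\xi=0$ it equals $\dim_{\fq}(T)=r$, since $0\in S$ forces the condition $\gamma\xi\in S$ to hold for all $\gamma$.

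Next, since $T\cap\xi^{-1}S$ is an $\fq$-subspace of $T$ and $\dim_{\fq}(T)=r$, its dimension never exceeds $r$, and it equals $r$ precisely when $T\subseteq\xi^{-1}S$, that is, when $\xi T\subseteq S$. I would then rewrite this containment through the basis $a_1,\ldots,a_r$ of $T$: because each $a_i$ is nonzero and hence invertible in $\fqn$, and because $S$ is an $\fq$-subspace, the inclusion $\xi T\subseteq S$ is equivalent to $\xi a_i\in S$ for every $i$, i.e.\ to $\xi\in a_i^{-1}S$ for every $i$, i.e.\ to $\xi\in a_1^{-1}S\cap\cdots\cap a_r^{-1}S$. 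The degenerate value $\xi=0$ is automatically included, as $0$ lies in each $a_i^{-1}S$. This identifies the points of weight $r$ distinct from $\langle(1,0)\rangle_{\fqn}$ with the stated set.

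Finally, to determine the size I would observe that distinct scalars $\xi$ give distinct points: if $\langle(\xi,1)\rangle_{\fqn}=\langle(\xi',1)\rangle_{\fqn}$, the proportionality factor must fix the second coordinate and hence be $1$, forcing $\xi=\xi'$. Thus the number of these points equals the cardinality of $a_1^{-1}S\cap\cdots\cap a_r^{-1}S$, which, being an intersection of $\fq$-subspaces, is itself an $\fq$-subspace of dimension $j$ and therefore has $q^j$ elements. The computation is elementary and I do not expect a genuine obstacle; the only points requiring care are treating the degenerate case $\xi=0$ uniformly with the generic case, and justifying the passage from the single containment $\xi T\subseteq S$ to the $r$ scalar membership conditions $\xi\in a_i^{-1}S$, which hinges on the invertibility of the basis vectors $a_i$ of $T$.
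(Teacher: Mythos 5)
Your proposal is correct. It rests on the same core identification as the paper's proof, namely that $U\cap\langle(\xi,1)\rangle_{\fqn}$ is carried isomorphically onto $\{\gamma\in T\colon\gamma\xi\in S\}$, but you package it more efficiently: by computing the weight of \emph{every} point $\langle(\xi,1)\rangle_{\fqn}$ at once as $\dim_{\fq}(T\cap\xi^{-1}S)$ (with the $\xi=0$ case handled separately), you get the upper bound $w_{L_U}(P)\leq r$ for free and reduce the whole statement to the equivalence $T\subseteq\xi^{-1}S\iff\xi\in a_1^{-1}S\cap\cdots\cap a_r^{-1}S$. The paper instead sets up an explicit map $\Phi\colon a_1^{-1}S\cap\cdots\cap a_r^{-1}S\to\{P\colon w_{L_U}(P)=r,\ P\neq\langle(1,0)\rangle_{\fqn}\}$ and verifies well-definedness (using the $\fq$-independence of the $a_i$), injectivity, and surjectivity, the last via a change-of-basis matrix $B\in\mathrm{GL}(r,q)$ turning an arbitrary independent set of vectors of $U\cap P$ into ones with second coordinates $a_1,\ldots,a_r$; it also invokes the inequality \eqref{eq:wpointsrank} to cap the weight at $r$. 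Your route avoids both the surjectivity bookkeeping and the appeal to \eqref{eq:wpointsrank}, and as a bonus yields the weight of every point of $L_U\setminus\{\langle(1,0)\rangle_{\fqn}\}$, not just those of weight $r$; the paper's bijection, on the other hand, makes the count $q^j$ completely explicit as the image of a subspace. Both arguments are sound and elementary.
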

\begin{proof}
First observe that, by \eqref{eq:wpointsrank} we have $w_{L_U}(P)\leq r$ for every point $P$ different from $\langle (1,0)  \rangle_{\fqn}$, which has weight $k-r$.
Let consider the map
\[
\begin{tabular}{l c c c }
$\Phi:$ & $a_1^{-1}S \cap \ldots \cap a_r^{-1}S$ & $\longrightarrow$ & $\{P \in L_U \colon w_{L_U}(P)=r, P \neq \langle (1,0) \rangle_{\F_{q^n}}\}$ \\
& $\xi$ & $\longmapsto$ & $P_{\xi}=\langle (\xi,1)\rangle_{\F_{q^n}}$.
\end{tabular}
\]
First, we prove that $\Phi$ is well-defined. Let $\xi \in a_1^{-1}S \cap \ldots \cap a_t^{-1}S$, then there exist $s_1,\ldots,s_r \in S$ such that $\xi=s_i/a_i$, for $i\in\{1,\ldots,r\}$. So the point $P_{\xi}=\langle (\xi,1)\rangle_{\fqn}=\langle (s_i,a_i)\rangle_{\fqn} \in L_U$. Moreover, since $a_1,\ldots,a_t$ are $\F_q$-linearly independent and $s_i/a_i=\xi$ for every $i$, $P_{\xi}$  has weight $r$ in $L_U$. It is clear that $\Phi$ is a one-to-one map. Next we prove that $\Phi$ is surjective. Let $P_{\mu}=\langle (\mu,1) \rangle_{\F_{q^n}} \in L_U$ with $w_{L_U}(P_{\mu})=r$. So there exist $(s_1,t_1),\ldots,(s_r,t_r)\in U$, $\F_q$-linearly independent such that $\langle (s_i,t_i)\rangle_{\F_{q^n}}=P_{\mu}$ and hence $s_i/t_i=\mu$, for $i\in \{1,\ldots,r\}$.
Note that, under these assumptions, $t_1,\ldots,t_r$ are $\F_q$-linearly independent.
Then $\langle t_1,\ldots,t_r \rangle_{\fq}=T$ and hence there exists a matrix $B=(b_{ij}) \in \mathrm{GL}(r,q)$ such that 
\[
B \left( \begin{matrix}
t_1 \\
\vdots \\
t_r
\end{matrix}\right)=\left( \begin{matrix}
a_1 \\
\vdots \\
a_r
\end{matrix}\right).
\]
Let $\overline{s}_1,\ldots,\overline{s}_r \in \fqn$ such that 
\[\left( \begin{matrix}
\overline{s}_1 \\
\vdots \\
\overline{s}_r
\end{matrix}\right)=B \left( \begin{matrix}
s_1 \\
\vdots \\
s_r
\end{matrix}\right).
\]
Hence $\overline{s}_1,\ldots,\overline{s}_r \in S$, and $\sum_{j=1}^{r} b_{ij} (s_j,t_j)=(\overline{s}_i,a_i)$, for $i\in\{1,\ldots,r\}$. Hence $(\overline{s}_i,a_i) \in P_{\mu}\cap U$, for any $i\in\{1,\ldots,r\}$, and $\mu=\overline{s}_i/a_i$. So $\mu \in a_i^{-1}S$, for every $i\in\{1,\ldots,r\}$ and this proves that $\Phi$ is a bijection.
\end{proof}

\subsection{Linear sets with complementary weights of type $(k-2,2)$}

Theorem \ref{prop:numberopointsr} reads as follows in the case of linear sets of rank $k$ with complementary weights of type $(k',2)$, where $k'=k-2$.

\begin{corollary}\label{cor:case(n-2,2)}
Let $U=S\times T$, where $S$ is a $k'$-dimensional $\fq$-subspace of $\fqn$ and $T=\langle 1,\mu\rangle_{\fq}$ for some $\mu \in \fqn\setminus \fq$.
The number of points of weight two in $L_U$ different from $\langle (1,0)  \rangle_{\fqn}$ is $q^j$ with $j=\dim_{\fq}(S\cap \mu S)$.
\end{corollary}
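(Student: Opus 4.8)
The plan is to apply Theorem~\ref{prop:numberopointsr} directly to the special case $r=2$, with the generating set of $T$ chosen to be $\{a_1,a_2\}=\{1,\mu\}$. Since the corollary is stated as a specialization of that theorem, the bulk of the work has already been done, and the main task is simply to translate the general conclusion into the language of the rank-$k$, type-$(k-2,2)$ situation.

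First I would verify that the hypotheses of Theorem~\ref{prop:numberopointsr} are met: here $r=2$, so the condition $r\le k-r$ reads $2\le k-2$, i.e.\ $k\ge 4$, which is the natural range in which a type-$(k-2,2)$ linear set with a weight-two point (distinct from the point of weight $k-2$) can occur. The subspace $T=\langle 1,\mu\rangle_{\fq}$ is genuinely $2$-dimensional precisely because $\mu\in\fqn\setminus\fq$, so $1$ and $\mu$ are $\fq$-linearly independent and indeed form a basis $a_1=1$, $a_2=\mu$ of $T$.

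Next I would substitute these choices into the conclusion of Theorem~\ref{prop:numberopointsr}. The theorem states that the points of weight $r$ in $L_U$ different from $\langle(1,0)\rangle_{\fqn}$ are in bijection with $a_1^{-1}S\cap\cdots\cap a_r^{-1}S$, of cardinality $q^{j}$ where $j=\dim_{\fq}(a_1^{-1}S\cap\cdots\cap a_r^{-1}S)$. With $a_1=1$ and $a_2=\mu$ this intersection becomes $1^{-1}S\cap\mu^{-1}S = S\cap\mu^{-1}S$. The only cosmetic point is that the statement of the corollary writes the intersection as $S\cap\mu S$ rather than $S\cap\mu^{-1}S$; I would observe that these two spaces have the same $\fq$-dimension, since multiplication by $\mu$ (an invertible element of $\fqn$) is an $\fq$-linear isomorphism of $\fqn$ carrying $S\cap\mu^{-1}S$ onto $\mu S\cap S$. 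Hence $\dim_{\fq}(S\cap\mu^{-1}S)=\dim_{\fq}(S\cap\mu S)=j$, giving exactly $q^{j}$ points of weight two other than $\langle(1,0)\rangle_{\fqn}$.

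I do not anticipate any genuine obstacle here, as the corollary is a direct instantiation of the preceding theorem; the only subtlety worth flagging explicitly is the harmless discrepancy between $S\cap\mu^{-1}S$ and $S\cap\mu S$, which is resolved by the dimension-preserving isomorphism just described. Everything else is a mechanical specialization of Theorem~\ref{prop:numberopointsr} to $r=2$.
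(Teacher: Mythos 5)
Your proof is correct and follows exactly the route the paper intends: the paper gives no separate argument, presenting the corollary as an immediate specialization of Theorem~\ref{prop:numberopointsr} with $a_1=1$, $a_2=\mu$. Your observation that $\dim_{\fq}(S\cap\mu^{-1}S)=\dim_{\fq}(S\cap\mu S)$ via the isomorphism given by multiplication by $\mu$ is the right (and only) point needing comment, and you handle it correctly.
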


Corollary \ref{cor:case(n-2,2)} allows us to determine the size of a linear set with complementary weights of type $(k',2)$.

\begin{corollary}\label{cor:(n-2,2)2}
Let $U=S\times T$, where $S$ is a $k'$-dimensional $\fq$-subspace of $\fqn$ and $T=\langle 1,\mu\rangle_{\fq}$ for some $\mu \in \fqn\setminus \fq$.
Let $j=\dim_{\fq}(S\cap \mu S)$.
Then
\begin{equation}\label{eq:card2w} 
|L_U|=q^{k'+1}+q^{k'}-q^{j+1}+1. 
\end{equation}
In particular, 
\begin{itemize}
    \item if $j=k'$ then $L_U=L_W$, where $W=\langle U\rangle_{\F_{q^t}}$ and all the point of $L_U$ have weight greater than one. Hence, $\F_{q^t}$ is a geometric field of linearity of $L_U$;
    \item $L_U$ has a point of weight one if and only if $j\leq k'-1$. Also, it has minimum size $q^{k'+1}+1$ if and only if $j=k'-1$.
\end{itemize}
\end{corollary}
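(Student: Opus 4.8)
The plan is to reduce the whole statement to the weight relation \eqref{eq:pesivett} together with the count of weight-two points from Corollary \ref{cor:case(n-2,2)}. First I would record the basic data of $L_U$. Since $U=S\times T$ has $\fq$-dimension $k'+2$, the rank of $L_U$ is $k:=k'+2$; the point $\langle(1,0)\rangle_{\fqn}$ meets $U$ in $S\times\{0\}$ and so has weight $k-2=k'$, while by \eqref{eq:wpointsrank} every other point $P$ satisfies $w_{L_U}(P)\le k-(k-2)=2$. Hence the only weights occurring are $1,2$ and $k-2$, the point $\langle(1,0)\rangle_{\fqn}$ being the unique point of weight $k-2$, and by Corollary \ref{cor:case(n-2,2)} there are exactly $q^{j}$ points of weight two different from $\langle(1,0)\rangle_{\fqn}$.

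Next I would compute the size. Writing $N_1$ for the number of weight-one points, partitioning $L_U$ gives $|L_U|=1+q^{j}+N_1$, and this holds uniformly (when $k-2=2$ one reads $\langle(1,0)\rangle_{\fqn}$ as the leading summand and the $q^{j}$ as the remaining weight-two points). To determine $N_1$ I insert the three weight-contributions into \eqref{eq:pesivett}: the point $\langle(1,0)\rangle_{\fqn}$ contributes $(q^{k-2}-1)/(q-1)$, the $q^{j}$ weight-two points contribute $q^{j}(q+1)$, and the weight-one points contribute $N_1$. Solving the resulting identity
\[
N_1+q^{j}(q+1)+\frac{q^{k-2}-1}{q-1}=\frac{q^{k}-1}{q-1}
\]
yields $N_1=(q+1)(q^{k-2}-q^{j})$, whence $|L_U|=1+q^{j}+(q+1)(q^{k-2}-q^{j})=q^{k-1}+q^{k-2}-q^{j+1}+1$, which is \eqref{eq:card2w} after substituting $k=k'+2$.

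For the first bullet, assume $j=k'$. Then $\dim_{\fq}(S\cap\mu S)=\dim_{\fq}S$ forces $\mu S=S$, so Lemma \ref{lemma:power}(a) shows that $S$ is an $\fq(\mu)$-subspace; put $\F_{q^t}=\fq(\mu)$. The size formula now gives $N_1=(q+1)(q^{k'}-q^{k'})=0$, so every point of $L_U$ has weight at least two. I would then set $W=\langle U\rangle_{\F_{q^t}}$ and note that, since $S$ is already $\F_{q^t}$-linear and $\mu\in\F_{q^t}$, one has $W=S\times\F_{q^t}$. The remaining task is the set equality $L_U=L_W$: both contain $\langle(1,0)\rangle_{\fqn}$, and a point $\langle(\xi,1)\rangle_{\fqn}$ lies in $L_W$ exactly when $\xi\in S$, which, because $\mu S=S$, is precisely the set of $\xi$ giving the weight-two points of $L_U$ determined above. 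Thus $L_U=L_W$ with $W$ an $\F_{q^t}$-subspace, so $\F_{q^t}$ is a geometric field of linearity.

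For the second bullet I would use that $0\le j\le k'$ always holds and that $N_1=(q+1)(q^{k'}-q^{j})$, whence $N_1>0$ if and only if $j\le k'-1$; this is exactly the condition for $L_U$ to have a point of weight one (the excluded value $j=k'$ being the field-of-linearity case just treated). Finally, comparing with the minimum value $q^{k-1}+1=q^{k'+1}+1$, the equality $q^{k'+1}+q^{k'}-q^{j+1}+1=q^{k'+1}+1$ holds if and only if $q^{k'}=q^{j+1}$, i.e.\ $j=k'-1$. The step I expect to be the main obstacle is the identification $L_U=L_W$ in the first bullet: the size computation itself is forced by the standard relations, but to interpret the numerical coincidence $j=k'$ geometrically one must check that enlarging $U$ to its $\F_{q^t}$-span produces no new points, which is where Lemma \ref{lemma:power}(a) and the condition $\mu S=S$ are essential.
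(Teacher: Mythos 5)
Your proposal is correct and follows essentially the same route as the paper: determine the possible weights via \eqref{eq:wpointsrank}, count the weight-two points with Corollary \ref{cor:case(n-2,2)}, extract the size from \eqref{eq:pesicard}--\eqref{eq:pesivett}, and invoke Lemma \ref{lemma:power}(a) when $j=k'$ to get $W=S\times\F_{q^t}$ and $L_U=L_W$. The only cosmetic differences are that you make $N_1=(q+1)(q^{k'}-q^{j})$ explicit and prove $L_U=L_W$ by directly identifying the points rather than by the containment-plus-cardinality argument of the paper; both are sound.
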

\begin{proof}
By \eqref{eq:wpointsrank}, the points of $L_U$ have weight $1$, $2$ or $k'$. If $k'>2$, then $L_U$ contains only one point of weight $k'$ by \eqref{eq:wpointsrank} and $q^j$ points of weight two, by Proposition \ref{prop:numberopointsr}. 
Then \eqref{eq:card2w} follows by \eqref{eq:pesicard} and \eqref{eq:pesivett}.
If $k'=2$, then the number of points of weight two in $L_U$ is $q^j+1$ and we can argue as in the previous case to get \eqref{eq:card2w}.
If $j=k'$, then $S=\mu S$ and by Lemma \ref{lemma:power} $S$ is an $\F_{q^t}$-subspace, with $\F_{q^t}=\F_q(\mu)$. Then $W=\langle S \times T \rangle_{\F_{q^t}}=S \times \F_{q^t}$, and $L_U \subseteq L_W$. Moreover $\lvert L_W \rvert = q^{k'}+1=\lvert L_U \rvert$, so $L_U=L_W$ and an its geometric field of linearity is $\F_{q^t}$.
The last part immediately follows by Equation \eqref{eq:card2w}.
\end{proof}

As a consequence of the above corollary and Lemma \ref{lemma:power} we obtain the following classification of linear sets of minimum size of rank $k=k'+2$ admitting a point of weight $k'$.

\begin{theorem}\label{th:classification2}
Let $L_U$ be an $\fq$-linear set in $\PG(1,q^n)$ of rank $k=k'+2$ with a point of weight $k'$.
Then $L_U$ has minimum size $q^{k'+1}+1$ if and only if, up to the action of $\mathrm{GL}(2,q^n)$, $U=S\times T$,  where $S$ is a $k'$-dimensional $\fq$-subspace of $\fqn$ and $T=\langle 1,\mu\rangle_{\fq}$ for some $\mu \in \fqn\setminus \fq$ with $\fq(\mu)=\F_{q^t}$ and for $S$ one of the following cases occurs:
\begin{enumerate}
    \item $t> k'$ and $S=b \langle 1,\mu,\ldots,\mu^{k'-1}\rangle_{\fq}$, for some $b \in \fqn^*$;
    \item $t\leq k'-1$, write $k'=t\ell+m$ with $m<t$, then $m>0$ and $S=\overline{S}\oplus b\langle 1,\mu,\ldots,\mu^{m-1}\rangle_{\fq}$, where $\overline{S}$ is an $\F_{q^t}$-subspace of dimension $\ell$, $b \in \fqn^*$ and $\overline{S} \cap b\F_{q^t}=\{0\}$.
\end{enumerate}
In particular, if $n$ is prime the only $\fq$-linear sets of rank $k'+2$ in $\PG(1,q^n)$ having minimum size $q^{k'+1}+1$ with a point of weight $k'$ are those described in Theorem \ref{th:constructionVdV}.
\end{theorem}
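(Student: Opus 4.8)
The plan is to reduce both directions to the single invariant $j=\dim_{\fq}(S\cap\mu S)$ and then feed the structural Lemma \ref{lemma:power}; since that lemma already carries out the hard combinatorial work, the theorem is largely a matter of assembling the earlier results and checking a dimension count. For the forward implication, assume $L_U$ has minimum size $q^{k'+1}+1$ and a point $P$ of weight $k'$. By \eqref{eq:wpointsrank} every point other than $P$ has weight at most $k-k'=2$, so the weights lie in $\{1,2,k'\}$. I would first show that a point of weight $2$ must occur: otherwise $L_U$ would be a $k'$-club of rank $k'+2$, hence of size $q^{k'+1}+q^{k'}+1>q^{k'+1}+1$, contradicting minimality. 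Thus $P$ together with a weight-$2$ point furnishes two points of complementary weights of type $(k',2)$, and Proposition \ref{prop:formls2} lets me assume, up to $\mathrm{GL}(2,q^n)$, that $U=S\times T$ with $\dim_{\fq}(S)=k'$ and $T=\langle 1,\mu\rangle_{\fq}$ for some $\mu\in\fqn\setminus\fq$. Corollary \ref{cor:(n-2,2)2} then converts the minimum-size hypothesis, via \eqref{eq:card2w}, into the single equation $j=k'-1$.

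At this point I am exactly in the setting of Lemma \ref{lemma:power} applied to the $k'$-dimensional space $S$ with $\dim_{\fq}(S\cap\mu S)=k'-1$. Writing $t=\dim_{\fq}(\fq(\mu))$, part (b) (the range $t\geq k'$) yields $S=b\langle 1,\mu,\dots,\mu^{k'-1}\rangle_{\fq}$ and simultaneously $t\neq k'$, hence $t>k'$, which is case 1; part (c) (the range $t\leq k'-1$) yields the decomposition $S=\overline{S}\oplus b\langle 1,\mu,\dots,\mu^{m-1}\rangle_{\fq}$ of case 2. Because Lemma \ref{lemma:power}(b) also rules out $t=k'$, the two ranges are exhaustive and mutually exclusive, so the forward direction is complete.

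For the converse I would verify directly that each listed form of $S$ forces $j=k'-1$, whence minimum size follows from Corollary \ref{cor:(n-2,2)2}. In case 1, $\mu S=b\langle\mu,\dots,\mu^{k'}\rangle_{\fq}$, so $S\cap\mu S=b\langle\mu,\dots,\mu^{k'-1}\rangle_{\fq}$ has dimension $k'-1$, the powers $1,\mu,\dots,\mu^{k'}$ being $\fq$-independent since $t>k'$. In case 2, using that $\overline{S}$ is $\F_{q^t}$-stable (so $\mu\overline{S}=\overline{S}$) and $\mu\in\F_{q^t}$, one computes $S\cap\mu S=\overline{S}\oplus b\langle\mu,\dots,\mu^{m-1}\rangle_{\fq}$, of dimension $\ell t+(m-1)=k'-1$, with $S\neq\mu S$ forcing equality. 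Finally, for the ``in particular'' clause, when $n$ is prime the only intermediate field is $\fqn$ itself, so $\fq(\mu)=\fqn$ and $t=n$; since $k'\le n-2<n$ we necessarily land in case 1, and rescaling the first coordinate by $b^{-1}$ identifies $U$ with $\langle 1,\mu,\dots,\mu^{k'-1}\rangle_{\fq}\times\langle 1,\mu\rangle_{\fq}$, which is precisely the construction of Theorem \ref{th:constructionVdV} with $\lambda=\mu$ and $\{t_1,t_2\}=\{2,k'\}$ (the requirement $t_1+t_2=k'+2\le n+1=s+1$ holding since $k'\le n-1$).

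The genuine obstacle, were Lemma \ref{lemma:power} not available, would be passing from the single numerical equation $\dim_{\fq}(S\cap\mu S)=k'-1$ to the explicit two-family classification of $S$; with the lemma in hand, the remaining points requiring care are purely organizational: guaranteeing a weight-$2$ point so that Proposition \ref{prop:formls2} applies, confirming that the ranges $t>k'$ and $t\le k'-1$ are exhaustive (the exclusion of $t=k'$ supplied by Lemma \ref{lemma:power}(b) being essential here), and carrying out the two dimension counts in the converse.
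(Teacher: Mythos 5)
Your proposal is correct and follows exactly the route of the paper, whose proof of this theorem is simply the citation of Proposition \ref{prop:formls2}, Corollary \ref{cor:(n-2,2)2} and Lemma \ref{lemma:power}; you have merely filled in the details those citations leave implicit (the existence of a weight-two point, the translation of minimality into $\dim_{\fq}(S\cap\mu S)=k'-1$, the converse dimension counts, and the identification with Theorem \ref{th:constructionVdV} when $n$ is prime). No gaps.
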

\begin{proof}
The assertion follows by Proposition \ref{prop:formls2}, Corollary \ref{cor:(n-2,2)2} and
Lemma \ref{lemma:power}.
\end{proof}

\begin{remark} \label{rk:geometricfieldminimum}
Note that in case 2 of Theorem \ref{th:classification2} if $W=\langle U \rangle_{\F_{q^t}}$, then $L_U\subseteq L_W$ and $L_U=L_W$ if and only if $m=t-1$.
\end{remark}

As a corollary we can refine the classification of minimum size linear sets of rank $5$ given in \cite[Proposition 2.12]{DVdV}.

\begin{corollary}
Let $L_U$ be an $\fq$-linear set of rank $5$ with minimum size $q^4+1$ in $\PG(1,q^n)$ containing one point of weight three. 
Then $U$ is $\mathrm{GL}(2,q^n)$-equivalent to one of the following subspace:
\begin{itemize}
    \item $S\times T$, where $S=b \langle 1,\mu,\mu^2,\mu^3,\mu^{4}\rangle_{\fq}$, $T=\langle 1,\mu \rangle_{\fq}$, for some $\mu \in \fqn\setminus \fq$ and $b \in \fqn^*$;
    \item $(\overline{S}\oplus b\fq)\times \F_{q^2}$, where $\overline{S}$ is an $\F_{q^2}$-subspace of dimension $2$, $b \in \fqn$ and $\overline{S} \cap b\F_{q^2}=\{0\}$.
\end{itemize}
In the latter case $n$ is even, $\F_{q^2}$ is a geometric field of linearity of $L_U$,  $L_U=L_W$, where $W=\langle U \rangle_{\F_{q^2}}$, and $L_W$ is a $(\dim_{\F_{q^t}}(W))$-club.
\end{corollary}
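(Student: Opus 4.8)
The statement is a specialization of Theorem \ref{th:classification2}: a rank-$5$ linear set with a point of weight three and of minimum size $q^{4}+1$ is precisely the case $k=5$, $k'=k-2=3$ of that theorem (note that $q^{4}+1=q^{k'+1}+1$). So the plan is to invoke Theorem \ref{th:classification2}, record which of its two alternatives survive once $k'$ is fixed to $3$, and then verify the extra geometric claims attached to the second alternative. Throughout I put $t=\dim_{\fq}(\fq(\mu))$, where $T=\langle 1,\mu\rangle_{\fq}$.

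By Theorem \ref{th:classification2}, up to $\mathrm{GL}(2,q^{n})$ we may take $U=S\times T$, and exactly one of its two cases occurs. In the first case $t>k'=3$, and the theorem gives $S=b\langle 1,\mu,\ldots,\mu^{k'-1}\rangle_{\fq}$ together with $T=\langle 1,\mu\rangle_{\fq}$, which for $k'=3$ is the first listed subspace. In the second case $t\leq k'-1=2$; since $\mu\notin\fq$ forces $t\geq 2$, the value $t=2$ is the only possibility, so $\fq(\mu)=\F_{q^{2}}$ and $T=\langle 1,\mu\rangle_{\fq}=\F_{q^{2}}$. Writing $k'=t\ell+m$ with $0\leq m<t$ gives $3=2\cdot 1+1$, hence $\ell=1$ and $m=1>0$, and case (2) of Theorem \ref{th:classification2} returns $S=\overline S\oplus b\langle 1\rangle_{\fq}=\overline S\oplus b\fq$ with $\overline S$ a one-dimensional $\F_{q^{2}}$-subspace (so of $\fq$-dimension two) satisfying $\overline S\cap b\F_{q^{2}}=\{0\}$. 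Thus $U=(\overline S\oplus b\fq)\times\F_{q^{2}}$, the second listed subspace.

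For the remaining assertions in the second case I would argue as follows. As $\F_{q^{2}}\subseteq\fqn$, necessarily $2\mid n$, so $n$ is even. Since $m=1=t-1$, Remark \ref{rk:geometricfieldminimum} gives $L_U=L_W$ with $W=\langle U\rangle_{\F_{q^{2}}}$, so $\F_{q^{2}}$ is a geometric field of linearity of $L_U$. To identify $L_W$ as a club, I would compute $W=\langle S\rangle_{\F_{q^{2}}}\times\F_{q^{2}}$ and note that $\langle S\rangle_{\F_{q^{2}}}=\overline S\oplus b\F_{q^{2}}$ is a two-dimensional $\F_{q^{2}}$-subspace, the sum being direct by $\overline S\cap b\F_{q^{2}}=\{0\}$. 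Viewing $L_W$ as an $\F_{q^{2}}$-linear set of rank $\dim_{\F_{q^{2}}}(W)$, the point $\langle(1,0)\rangle_{\fqn}$ has $\F_{q^{2}}$-weight $\dim_{\F_{q^{2}}}(\langle S\rangle_{\F_{q^{2}}})$, while every other point has $\F_{q^{2}}$-weight one; hence $L_W$ is a club.

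All the reductions are immediate once Theorem \ref{th:classification2} is available; the one step needing genuine care is the last, namely checking that over the larger field $\F_{q^{2}}$ the set $L_W$ carries exactly one point of higher weight and all remaining points of weight one. I expect to obtain this either directly from the explicit shape of $W$ via Theorem \ref{prop:numberopointsr} and Corollary \ref{cor:(n-2,2)2} applied over $\F_{q^{2}}$, or by observing that an $\F_{q^{2}}$-linear set of this minimum size carrying a point of complementary weight must concentrate all of its excess weight at a single point.
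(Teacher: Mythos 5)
Your proposal is correct and follows exactly the paper's route: the paper's entire proof is ``apply Theorem \ref{th:classification2} and Remark \ref{rk:geometricfieldminimum}'', and you carry out precisely that specialization to $k'=3$ (forcing $t>3$ in the first case and $t=2$, $\ell=m=1$ in the second) plus the same remark for the $L_U=L_W$ and club claims. The only mismatch is with the statement as printed, not with your argument: Theorem \ref{th:classification2} yields $S=b\langle 1,\mu,\mu^{2}\rangle_{\fq}$ in the first case, as you derive, whereas the corollary's first bullet lists $b\langle 1,\mu,\mu^2,\mu^3,\mu^{4}\rangle_{\fq}$, which would have rank $7$ and is evidently a typo in the paper.
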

\begin{proof}
By applying Theorem \ref{th:classification2} and Remark \ref{rk:geometricfieldminimum}, the desired result is obtained. 
\end{proof}

\subsection{The $n$ prime case}

Next, when $n$ is a prime we give a bound on the number of points of weight $r$. Moreover, if this bound is reached, then both $S$ and $T$ admit a polynomial basis.

\begin{lemma}
 \label{lem:boundpointsr}
Suppose that $n$ is a prime. Let $n\geq k>r\geq 2$ be integers. Let $S$ be a $(k-r)$-dimensional $\fq$-subspace of $\fqn$ with $r \leq k-r$ and let $T$ be an $r$-dimensional $\F_q$-subspace of $\F_{q^n}$ and suppose that $T=\langle a_1,\ldots,a_r\rangle_{\fq}$ for some $a_1,\ldots,a_r \in \fqn$. Let $j=\dim_{\fq}(a_1^{-1}S\cap \ldots \cap a_r^{-1} S)$. Then $j \leq k-2r+1$. Moreover, if $j=k-2r+1 \geq 2$, then $S=g\langle 1,\lambda,\ldots,\lambda^{k-r-1} \rangle_{\F_q}$ and $T=g'\langle  1,\lambda,\ldots,\lambda^{r-1}\rangle_{\F_q}$, for some $g,g',\lambda \in \F_{q^n}$.
\end{lemma}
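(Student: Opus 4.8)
The plan is to recast the intersection $W := a_1^{-1}S \cap \cdots \cap a_r^{-1}S$, whose dimension is $j$, as a condition on a \emph{product} of $\fq$-subspaces, so that the linear Kneser and Vosper theorems apply. The key observation is that $\xi \in W$ if and only if $a_i\xi \in S$ for every $i$; hence $a_i W \subseteq S$ for all $i$, and passing to $\fq$-spans gives $\langle TW\rangle_{\fq} \subseteq S$ because $T = \langle a_1, \ldots, a_r\rangle_{\fq}$. In particular
\[
\dim_{\fq}(\langle TW\rangle_{\fq}) \leq \dim_{\fq}(S) = k - r,
\]
and this single inequality is the bridge between $j$ and the additive-combinatorial machinery.

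To derive $j \le k - 2r + 1$ I would first record that, because $n$ is prime, the only divisor $t > 1$ of $n$ is $n$ itself; thus in the exceptional alternative of Theorem \ref{teo:bachocserrazemor} the space $\langle AB\rangle_{\fq}$ would have to be an $\F_{q^n}$-subspace, i.e.\ all of $\fqn$. Either alternative therefore yields the Cauchy--Davenport bound
\[
\dim_{\fq}(\langle AB\rangle_{\fq}) \geq \min\{\dim_{\fq} A + \dim_{\fq} B - 1,\, n\}
\]
for all nonzero $\fq$-subspaces $A, B$ of $\fqn$. Taking $A = T$ and $B = W$ (the case $W = \{0\}$ being trivial, since $r \le k-r$ forces $k - 2r + 1 \ge 1$) and combining with the upper bound gives $\min\{r + j - 1, n\} \le k - r$; since $k \le n$ and $r \ge 1$ force $k - r < n$, the minimum equals $r + j - 1$, whence $j \le k - 2r + 1$.

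For the equality case $j = k - 2r + 1 \ge 2$, the two displayed inequalities become equalities, so $\dim_{\fq}(\langle TW\rangle_{\fq}) = r + j - 1 = k - r = \dim_{\fq}(S)$. Consequently $S = \langle TW\rangle_{\fq}$ and $(T, W)$ is a critical pair meeting the hypotheses of Theorem \ref{teo:bachocserrazemor2}: both $\dim_{\fq} T = r$ and $\dim_{\fq} W = j$ are at least $2$, and $\dim_{\fq}(\langle TW\rangle_{\fq}) = k - r \le n - 2$ thanks to $k \le n$ and $r \ge 2$. That theorem produces a common generator $\lambda$ with $T = g'\langle 1, \lambda, \ldots, \lambda^{r-1}\rangle_{\fq}$ and $W = g''\langle 1, \lambda, \ldots, \lambda^{j-1}\rangle_{\fq}$. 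Substituting back, $S = \langle TW\rangle_{\fq} = g'g''\langle 1, \lambda, \ldots, \lambda^{r+j-2}\rangle_{\fq}$, and since $r + j - 2 = k - r - 1$ this is exactly $S = g\langle 1, \lambda, \ldots, \lambda^{k-r-1}\rangle_{\fq}$ with $g = g'g''$, as claimed.

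I expect the main obstacle to be the very first reduction: realizing that the defining intersection should be converted into the product containment $\langle TW\rangle_{\fq} \subseteq S$, which is what unlocks Theorems \ref{teo:bachocserrazemor} and \ref{teo:bachocserrazemor2}. After that, the only subtlety is ensuring $T$ and $W$ come out as polynomial bases in the \emph{same} element $\lambda$ --- guaranteed by the common generator in Theorem \ref{teo:bachocserrazemor2} --- so that their product telescopes into consecutive powers of $\lambda$ and hands back the polynomial basis for $S$.
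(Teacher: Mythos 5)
Your proof is correct, and it reaches the conclusion by a genuinely different (dual) route from the paper's. The paper dualizes: using the trace form it identifies $\bigl(a_1^{-1}S\cap\cdots\cap a_r^{-1}S\bigr)^{\perp}=a_1S^{\perp}+\cdots+a_rS^{\perp}=\langle S^{\perp}T\rangle_{\fq}$, applies Theorem \ref{teo:bachocserrazemor} to the pair $(S^{\perp},T)$ to get $n-j\geq (n-k+r)+r-1$, and in the equality case applies Theorem \ref{teo:bachocserrazemor2} to $(S^{\perp},T)$; it then needs Proposition \ref{prop:dualwithdual} (hence the dual-basis computation of Lemma \ref{cor:dualbasis}) to convert a polynomial basis of $S^{\perp}$ into one of $S$. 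You instead work on the primal side with $W=a_1^{-1}S\cap\cdots\cap a_r^{-1}S$ itself, via the containment $\langle TW\rangle_{\fq}\subseteq S$, apply the two theorems to the pair $(T,W)$, and in the equality case recover $S=\langle TW\rangle_{\fq}=g'g''\langle 1,\lambda,\ldots,\lambda^{r+j-2}\rangle_{\fq}$ by multiplying the two polynomial bases directly. Your version is more self-contained: it never invokes the orthogonal complement or the dual-basis machinery, and you correctly check all hypotheses (the degenerate case $W=\{0\}$, the primality argument forcing the Cauchy--Davenport bound, $\dim_{\fq}W=j\geq 2$, and $\dim_{\fq}\langle TW\rangle_{\fq}=k-r\leq n-2$). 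What the paper's dual formulation buys in exchange is the identity $\langle ST\rangle_{\fq}=(a_1^{-1}S^{\perp}\cap\cdots\cap a_r^{-1}S^{\perp})^{\perp}$, which it reuses verbatim in Proposition \ref{prop:critpairs} to tie critical pairs $(S,T)$ to point counts in linear sets; your argument would need that duality step reinstated there.
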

\begin{proof}
First note that $(a_1^{-1}S\cap \ldots \cap a_r^{-1} S)^{\perp}=(a_1^{-1}S)^{\perp} + \ldots + (a_r^{-1} S)^{\perp}=a_1S^{\perp} + \ldots + a_r S^{\perp}$. This also implies that $\langle S^{\perp}  T \rangle_{\fq}=(a_1^{-1}S\cap \ldots \cap a_r^{-1} S)^{\perp}$ and so $\dim_{\F_q} \langle S^{\perp}  T \rangle_{\fq}=n-j$. Since $n$ is prime, by Theorem \ref{teo:bachocserrazemor}, we have that $\dim_{\F_q} (\langle S^{\perp} T \rangle_{\fq})\geq n-k+2r-1$ and so $j \leq k-2r+1$. If $j=k-2r+1$, then $\dim_{\F_q} (\langle S^{\perp}  T \rangle_{\fq}) = n-k+2r-1$, and  Theorem \ref{teo:bachocserrazemor2} implies that $S^{\perp}=\overline{g}\langle 1,\lambda,\ldots,\lambda^{n-k+r-1} \rangle_{\F_q}$ and $T=g'\langle  1,\lambda,\ldots,\lambda^{r-1}\rangle_{\F_q}$, for some $\overline{g},g',\lambda \in \F_{q^n}$. Therefore $(\overline{g} S)^{\perp}=\overline{g}^{-1}S^{\perp}=\langle 1,\lambda,\ldots,\lambda^{n-k+r-1} \rangle_{\F_q}$, and by Proposition \ref{prop:dualwithdual}, $S=g\langle 1,\lambda,\ldots,\lambda^{k-r-1} \rangle_{\F_q}$, for some $g \in \F_{q^n}$.
\end{proof}

\begin{remark}
For every $n$, the proof of the above lemma can be applied to obtain a lower bound on the number of points of weight $r$. Precisely, a linear set of rank $k$ with a point of weight $k-r$ and a point of weight $r$ (with $r\leq k-r$) contains at least $q^{n-(n-k)r-r^2}$ points of weight $r$.
\end{remark}

\begin{theorem}
Let $n\geq k>r\geq 2$ be integers.
Let $L_U$ be an $\fq$-linear set with complementary weights of type $(k-r,r)$ with a point $P$ of weight $k-r$ and with at least $q^{k-2r}+1$ points different from $P$ of weight $r$.
If $n$ is prime then $L_U$ has minimum size $q^{k-1}+1$ and, up to $\mathrm{G L}(2,q^n)$-equivalence, $L_U$ 
coincides with those described in Theorem \ref{th:constructionVdV}. 
\end{theorem}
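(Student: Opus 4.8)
The plan is to reduce to the normal form of Proposition \ref{prop:formls2}, translate the hypothesis on the number of weight-$r$ points into a single dimension condition, and then feed this into Lemma \ref{lem:boundpointsr}, whose structural conclusion already matches Theorem \ref{th:constructionVdV}.

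First I would invoke Proposition \ref{prop:formls2} to assume, up to $\mathrm{PGL}(2,q^n)$-equivalence, that $U=S\times T$ with $\dim_{\fq}(S)=k-r$, $\dim_{\fq}(T)=r$ and $1\in T$; here $P=\langle(1,0)\rangle_{\fqn}$ is the point of weight $k-r$, while $\langle(0,1)\rangle_{\fqn}$ has weight $r$, and every other point has weight at most $r$ by \eqref{eq:wpointsrank}. Writing $T=\langle a_1,\ldots,a_r\rangle_{\fq}$, Theorem \ref{prop:numberopointsr} identifies the points of weight $r$ distinct from $P$ with the elements of $a_1^{-1}S\cap\cdots\cap a_r^{-1}S$, so their number is $q^{j}$ with $j=\dim_{\fq}(a_1^{-1}S\cap\cdots\cap a_r^{-1}S)$ (the value $\xi=0$ recovers $\langle(0,1)\rangle_{\fqn}$, so this count already includes it).

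Next I would pin down $j$. The hypothesis gives $q^{j}\geq q^{k-2r}+1>q^{k-2r}$, hence $j\geq k-2r+1$; since $n$ is prime, Lemma \ref{lem:boundpointsr} gives the reverse bound $j\leq k-2r+1$, so $j=k-2r+1$. Assuming $k>2r$, so that $j\geq 2$, the second assertion of Lemma \ref{lem:boundpointsr} yields a common polynomial generator, namely $S=g\langle 1,\lambda,\ldots,\lambda^{k-r-1}\rangle_{\fq}$ and $T=g'\langle 1,\lambda,\ldots,\lambda^{r-1}\rangle_{\fq}$ for some $g,g',\lambda\in\fqn$. It then remains to recognise this shape: applying $\mathrm{diag}(g^{-1},g'^{-1})\in\mathrm{GL}(2,q^n)$, together with the coordinate swap if needed, transforms $U$ into $\langle 1,\lambda,\ldots,\lambda^{r-1}\rangle_{\fq}\times\langle 1,\lambda,\ldots,\lambda^{k-r-1}\rangle_{\fq}$, which is exactly the subspace of Theorem \ref{th:constructionVdV} with $t_1=r$ and $t_2=k-r$. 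Since $\dim_{\fq}(S)=k-r\geq 2$ we have $\lambda\notin\fq$, and as $n$ is prime its degree over $\fq$ equals $s=n$; thus $t_1+t_2=k\leq n\leq s+1$, so the construction is legitimate and, by Theorem \ref{th:constructionVdV}, $L_U$ has exactly $q^{k-1}+1$ points and (as $r\geq2$) points of weight one, i.e.\ it has minimum size.

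The step I expect to be the main obstacle is the boundary case $k=2r$, where $j=k-2r+1=1$. Here the structural part of Lemma \ref{lem:boundpointsr} is unavailable: dually one has $\dim_{\fq}(\langle S^{\perp}T\rangle_{\fq})=n-j=n-1$, which falls outside the range $\leq n-2$ required by the Vosper-type Theorem \ref{teo:bachocserrazemor2}, so the critical pair $(S^{\perp},T)$ sits exactly on the Vosper boundary where extra configurations are a priori possible. This case must therefore be handled separately (or excluded by strengthening the hypothesis to $k>2r$); it is the only point at which the argument does not proceed automatically from the quoted results.
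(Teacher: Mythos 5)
Your argument is essentially the paper's own proof: the same reduction to $U=S\times T$ via Proposition \ref{prop:formls2}, the same count $q^{j}$ of weight-$r$ points from Theorem \ref{prop:numberopointsr}, the same two-sided pinning $j=k-2r+1$ (hypothesis for $\geq$, Lemma \ref{lem:boundpointsr} for $\leq$), and the same appeal to the structural part of Lemma \ref{lem:boundpointsr} followed by a diagonal normalization. The only substantive difference is that you flag the boundary case $k=2r$, and that flag is warranted: the paper's proof ends with ``again by Lemma \ref{lem:boundpointsr} the assertion follows'', silently invoking the structural conclusion of that lemma, which is stated (and is provable via Theorem \ref{teo:bachocserrazemor2}) only when $j=k-2r+1\geq 2$. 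When $k=2r$ one gets $j=1$, so the one-dimensional intersection $\langle\xi\rangle_{\fq}$ gives $\xi T\subseteq S$ and hence $S=\xi T$ by dimensions, but nothing more. For $r=2$ this is harmless, since $T=\langle 1,\mu\rangle_{\fq}$ is automatically a polynomial basis and one still lands in Theorem \ref{th:constructionVdV}; but for $r\geq 3$ any $r$-dimensional $T$ containing $1$ yields $U=\xi T\times T$ satisfying all the hypotheses (it has exactly $q\geq 2$ points of weight $r$ besides $P$, by the same Kneser bound), while $T$ need not admit a polynomial basis and $L_U$ need not have minimum size. So the statement genuinely requires $k>2r$ or a separate treatment of $k=2r$; the gap you identify sits in the published argument, not in yours.
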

\begin{proof}
Suppose that $T=\langle a_1,\ldots,a_r\rangle_{\fq}$ for some $a_1,\ldots,a_r \in \fqn$ and let $j=\dim_{\fq}(a_1^{-1}S\cap \ldots \cap a_r^{-1} S)$. By Theorem \ref{prop:numberopointsr}, $L_U$ has $q^j$ points of weight $r$ different from $P$. Then $q^j\geq q^{k-2r}+1$ and so $j \geq k-2r+1$, and by Lemma \ref{lem:boundpointsr} we have that $j=k-2r+1$ and so  again by Lemma \ref{lem:boundpointsr} the assertion follows.
\end{proof}

However, when $n$ is not a prime, the above classification cannot hold.
We have already seen in Theorem \ref{th:classification2} that there could be more examples of minimum size linear sets with complementary weights of type $(k-2,2)$. In the next sections we will provide more examples of $\fq$-subspaces defining minimum size linear sets not admitting a polynomial basis, and then we will prove that such subspaces can be even not $\mathrm{\Gamma L}(2,q^n)$-equivalent to the one of Theorem \ref{th:constructionVdV}.

\section{Constructions}

We provide new linear sets of minimum size admitting points with complementary weights.
To this aim we describe a procedure which allows us to construct linear sets in $\PG(1,q^n)$ for which its weight spectrum depends on the weight spectrum of a fixed $\fq$-linear set in $\mathrm{PG}(1,q^t)$, with $t \mid n$.

\begin{theorem} \label{th:contructionfromqt}
Let $\ell',t>1$ be positive integers such that $n =\ell' t$. 
Let $U' \neq \{0\}$ be an $\fq$-subspace of $\F_{q^t}^2$ with $\dim_{\fq}(U')=m$ and let $(i_1,\ldots,i_c)$ be the weight spectrum of $L_{U'} \subseteq \PG(1,q^t)$.
Let $\overline{S}$ be an $\F_{q^t}$-subspaces of $\F_{q^n}$ of dimension $\ell<\ell'$ and $b \in \F_{q^n}^*$ such that $\overline{S} \cap b \F_{q^t}=\{0\}$.
Then the $\fq$-linear set $L_U$ with 
\[U=\{(s+b u_1, u_2) \colon s \in \overline{S}, (u_1, u_2) \in U'\} \] 
has rank $k=\ell t+m$, 
\begin{equation}\label{eq:sizeLUconst}
\lvert L_U \rvert-1 = q^{\ell t} (\lvert L_{U'}  \rvert-1),
\end{equation}
$w_{L_U}(\langle (1,0) \rangle_{\fqn})= t\ell+w_{L_{U'}}(\langle (1,0) \rangle_{\F_{q^t}})$ and all the other points have weight in $\{i_1,\ldots,i_c\}$.
Moreover, denoting by $\overline{N}_{i_h}$ (respectively $N_{i_h}'$) the number of points of $L_U\setminus\{\langle (1,0) \rangle_{\fqn}\}$ (respectively $L_{U'}$) having weight $i_h$ in $L_U$ (respectively in $L_{U'} \setminus \{\langle (1,0) \rangle_{\F_{q^t}}\}$), we have that
\[ \overline{N}_{i_h}=q^{\ell t} N_{i_h}', \]
for any $h \in \{1,\ldots,c\}$.
\end{theorem}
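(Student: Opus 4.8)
The plan is to reduce everything to a single $q^{\ell t}$-to-one correspondence between the affine points of $L_U$ (those different from $\langle(1,0)\rangle_{\fqn}$) and the affine points of $L_{U'}$, under which weights are preserved. First I would record the linear-algebra fact underlying the construction: the $\fq$-linear map $\overline{S}\times U'\to U$ sending $(s,(u_1,u_2))$ to $(s+bu_1,u_2)$ is a bijection. Only injectivity needs checking, and this is exactly where the hypothesis $\overline{S}\cap b\F_{q^t}=\{0\}$ enters: if $(s+bu_1,u_2)=(0,0)$ then $u_2=0$ and $s=-bu_1\in\overline{S}\cap b\F_{q^t}=\{0\}$, so $s=u_1=0$. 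Consequently $\dim_{\fq}(U)=\dim_{\fq}(\overline{S})+\dim_{\fq}(U')=\ell t+m=k$, the claimed rank.

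For the weight of $\langle(1,0)\rangle_{\fqn}$ I would intersect $U$ with the subspace of vectors whose second coordinate vanishes: $(s+bu_1,u_2)$ lies in $\langle(1,0)\rangle_{\fqn}$ iff $u_2=0$, i.e.\ iff $u_1\in U_1':=\{u\in\F_{q^t}\colon(u,0)\in U'\}$, an $\fq$-space of dimension $w_{L_{U'}}(\langle(1,0)\rangle_{\F_{q^t}})$. Since $bU_1'\subseteq b\F_{q^t}$, the sum $\overline{S}+bU_1'$ is direct, so the intersection $(\overline{S}\oplus bU_1')\times\{0\}$ has $\fq$-dimension $\ell t+w_{L_{U'}}(\langle(1,0)\rangle_{\F_{q^t}})$, as asserted.

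The core of the proof is the weight of an arbitrary point $\langle(\xi,1)\rangle_{\fqn}$. Via the bijection above I would identify $U\cap\langle(\xi,1)\rangle_{\fqn}$ with $V_\xi:=\{(u_1,u_2)\in U'\colon \xi u_2-bu_1\in\overline{S}\}$. The decisive point is that, because $\overline{S}$ is an $\F_{q^t}$-subspace and $u_2\in\F_{q^t}^*$, for $u_2\neq0$ the condition $\xi u_2-bu_1\in\overline{S}$ is equivalent to $\xi-b(u_1/u_2)\in\overline{S}$; and since $\overline{S}\cap b\F_{q^t}=\{0\}$, there is at most one $\eta\in\F_{q^t}$ with $\xi-b\eta\in\overline{S}$ (two such would force $b(\eta'-\eta)\in\overline{S}\cap b\F_{q^t}=\{0\}$). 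Writing $W=\overline{S}\oplus b\F_{q^t}$, this yields a clean dichotomy: if $\xi\notin W$ then $V_\xi=\{0\}$ and $\langle(\xi,1)\rangle_{\fqn}\notin L_U$; if $\xi\in W$, say $\xi=s+b\eta$ with $s\in\overline{S}$ and $\eta\in\F_{q^t}$ uniquely, then $V_\xi=U'\cap\langle(\eta,1)\rangle_{\F_{q^t}}$, whence $w_{L_U}(\langle(\xi,1)\rangle_{\fqn})=w_{L_{U'}}(\langle(\eta,1)\rangle_{\F_{q^t}})$. In particular every point of $L_U$ other than $\langle(1,0)\rangle_{\fqn}$ has weight in $\{i_1,\dots,i_c\}$.

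The counting then follows formally from the map $\xi\mapsto\eta$. Each affine point $\langle(\eta,1)\rangle_{\F_{q^t}}$ of $L_{U'}$ of weight $i_h$ has exactly $q^{\ell t}$ preimages $\xi=s+b\eta$ with $s\in\overline{S}$, each giving an affine point of $L_U$ of the same weight $i_h$, and every affine point of $L_U$ arises in this way; hence $\overline{N}_{i_h}=q^{\ell t}N_{i_h}'$. Summing over $h$ gives $\lvert L_U\rvert-1=\sum_h\overline{N}_{i_h}=q^{\ell t}\sum_h N_{i_h}'=q^{\ell t}(\lvert L_{U'}\rvert-1)$, which is \eqref{eq:sizeLUconst}. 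I expect the only real obstacle to be the bookkeeping in the third paragraph, namely reducing the membership condition modulo the $\F_{q^t}$-subspace $\overline{S}$ and invoking the direct-sum hypothesis to pin down the unique slope $\eta$; once the correspondence $\xi\mapsto\eta$ is in place, the weight identity and both counting formulas are immediate.
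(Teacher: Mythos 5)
Your proof is correct and follows essentially the same route as the paper: both rest on the $q^{\ell t}$-to-one correspondence $\xi = s+b\eta$ (the paper packages it as the maps $\Psi_s$ with pairwise disjoint images partitioning $L_U\setminus\{\langle(1,0)\rangle_{\fqn}\}$), with the direct-sum hypothesis $\overline{S}\cap b\F_{q^t}=\{0\}$ doing the same work in both. Your treatment of weight preservation via the $\fq$-linear identification $U\cap\langle(\xi,1)\rangle_{\fqn}\cong U'\cap\langle(\eta,1)\rangle_{\F_{q^t}}$ is a slightly cleaner substitute for the paper's argument with explicit $\fq$-independent vectors, and you also verify the weight of $\langle(1,0)\rangle_{\fqn}$, which the paper's proof leaves implicit; these are presentational differences only.
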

\begin{proof}
If $\overline{S}$ has zero dimension then $L_U=L_{U'}$ and the assertion is trivial. 
Now, suppose that $\overline{S} \neq \{0\}$ and clearly
\[ U=\{(s+b u_1, u_2) \colon s \in \overline{S}, (u_1, u_2) \in U' \}. \]
For any $s \in \overline{S} $ define the map
\[
\begin{array}{lrrl}
    \Psi_{s} \colon & L_{U'} \setminus \{\langle (1,0)\rangle_{\F_{q^n}}\} \subseteq \PG(1,q^n) & \longrightarrow & L_{U} \setminus \{\langle (1,0)\rangle_{\F_{q^n}}\} \subseteq \PG(1,q^n)  \\
    & \langle (u_1,u_2) \rangle_{\F_{q^n}} & \longmapsto & \langle (s+b\frac{u_1}{u_2},1) \rangle_{\F_{q^n}}
\end{array}
\]
Clearly, if $ (u_1,u_2) \in U'$, with $u_2 \neq 0$, then $\langle (s+b\frac{u_1}{u_2},1) \rangle_{\F_{q^n}} \in L_{U} \setminus \{\langle (1,0) \rangle_{\F_{q^n}}\}$ and  $ \langle (u_1,u_2)\rangle_{\F_{q^n}}=\langle (u'_1,u'_2) \rangle_{\F_{q^n}}$ if and only if $\langle (s+b\frac{u_1}{u_2},1) \rangle_{\F_{q^n}}=\langle (s+b\frac{u'_1}{u'_2},1) \rangle_{\F_{q^n}}$, this proves that $\Psi_{s}$ is well-defined and a one-to-one map. Now, let $s_1,s_2 \in \overline{S}$ be such that $s_1\neq  s_2$ we prove that 
\[
\mathrm{Im}(\Psi_{s_1}) \cap \mathrm{Im}(\Psi_{s_2}) = \emptyset,
\]
where $\mathrm{Im}(\Psi_{s_i})=\Psi_{s_i}(L_{U'} \setminus \langle (1,0)\rangle_{\F_{q^t}})$ and $i \in \{1,2\}$. Suppose that $s_1 \neq s_2$ with $s_1,s_2 \in \overline{S}$ and \[\left\langle \left(s_1+b \frac{u_1}{u_2},1\right) \right\rangle_{\F_{q^n}}=\left\langle \left(s_2+b \frac{u'_1}{u'_2},1\right) \right\rangle_{\F_{q^n}},\] for some $ (u_1,u_2), (u'_1,u'_2) \in U'$, with $u_2,u'_2 \neq 0$. This implies that
\[
s_1-s_2=b\left( \frac{u'_1}{u'_2}- \frac{u_1}{u_2} \right).
\]
Since $b$ has been chosen in such a way that $\overline{S}\cap b \F_{q^t}=\{0\}$, we get that $s_1=s_2$, a contradiction.
Now, recalling that $\overline{S}$ is an $\F_{q^t}$-subspace, we observe that any point in $L_U \setminus \{\langle (1,0)\rangle_{\F_{q^n}}\}$ belongs to $\mathrm{Im}(\Psi_{s})$, for some $s \in \overline{S}$, and so the $\mathrm{Im}(\Psi_{s})$'s give a partition of $L_{U}\setminus \{\langle (1,0)\rangle_{\fqn}\}$ in copies of $L_{U'}\setminus \{\langle (1,0)\rangle_{\F_{q^t}}\}$.
Therefore, \eqref{eq:sizeLUconst} is proved.
Let determine the weight distribution of $L_U$. 
Suppose that $\langle (u_1,u_2) \rangle_{\F_{q^t}}$ defines a point of weight $i \in \{i_1,\ldots,i_{c}\}$ in $L_{U'}$, so there exist $(g_h,\overline{g}_h) \in U'$, for $h\in\{1,\ldots,i-1\}$ such that $\langle (u_1,u_2)\rangle_{\F_{q^t}}=\langle(g_h,\overline{g}_h)\rangle_{\F_{q^t}}$ and \[(u_1,u_2),(g_1,\overline{g}_1),\ldots,(g_{i-1},\overline{g}_{i-1})\] are $\F_q$-linearly independent. Let $s \in \overline{S}$, then it follows that 
\[
\left\langle \left(s+b \frac{u_1}{u_2},1 \right) \right\rangle_{\F_{q^n}}=\left\langle \left( s+b \frac{g_h}{\overline{g}_h},1 \right) \right\rangle_{\F_{q^n}}\]  for $h\in \{1,\ldots,i-1\}$ and 
\[ (s u_2+bu_1,u_2),(s\overline{g}_{1}+b g_1, \overline{g}_{1}),\ldots,(s\overline{g}_{i-1}+b g_{i-1},\overline{g}_{i-1})
\]
are $\F_q$-linear independent and so $w_{L_U}(\langle (s+b \frac{u_1}{u_2},1) \rangle_{\F_{q^n}}) \geq i$. Suppose now, that $\langle (s_1+b \frac{u_1}{u_2},1) \rangle_{\F_{q^n}} = \langle (s_2+b \frac{g_i}{\overline{g}_i},1) \rangle_{\F_{q^n}}$. Since if $s_1 \ne s_2$ then $\mathrm{Im}(\Psi_{s_1}) \cap \mathrm{Im}(\Psi_{s_2}) = \emptyset$, it follows that $s_1=s_2$ and $\langle (u_1,u_2)\rangle_{\F_{q^n}}=\langle(g_{i},\overline{g}_{i})\rangle_{\F_{q^n}}$. Hence this implies that $\langle (u_1,u_2)\rangle_{\F_{q^t}}=\langle(g_{i},\overline{g}_{i})\rangle_{\F_{q^t}}$ and so $w_{L_U}(\langle (s+b \frac{u_1}{u_2},1) \rangle_{\F_{q^n}}) = i$. Therefore the number of points of weight $i$ (different from $\langle (1,0) \rangle_{\fqn}$) in $L_U$ is $q^{\ell t}$ times the number of points of weight $i$ (different from $\langle (1,0) \rangle_{\fqn}$) in $L_{U'}$.
\end{proof}

We can now use Theorem \ref{th:contructionfromqt} and Theorem \ref{th:constructionVdV} to obtain new families of minimum size linear sets.

\begin{corollary} \label{th:newminimumsize}
Let $\ell',t>1$ be positive integers such that $n =\ell' t$. Suppose that $\F_{q}(\mu)=\F_{q^t}$, for some $\mu \in \F_{q^n}^*$. Let $\overline{S}$ be an $\F_{q^t}$-subspace of $\F_{q^n}$ of dimension $1 \leq \ell<\ell'$ and $b \in \F_{q^n}^*$ such that $\overline{S} \cap b \F_{q^t}=\{0\}$.
Let $m,j>0$ be integers such that $m+j \leq t+1$. Let $S=\overline{S} \oplus b \langle 1,\mu,\ldots,\mu^{m-1} \rangle_{\F_q}$, $T = \langle 1,\mu,\ldots,\mu^{j-1} \rangle_{\F_q}$ and $U=S \times T$. Then $L_U$ is an $\fq$-linear set of rank $k=\ell t+j+m$ and its size is $q^{k-1}+1$. 
Moreover, $L_U$ has one point of weight $\ell t+m$ and
\begin{itemize}
    \item if $m\geq j$ then it also contains $q^{\ell t+m-j+1}$ points of weight $j$ and $q^{k-2i+1}-q^{k-2i-1}$ points of weight $i$ for $i \in \{1,\ldots,j-1\}$;
    \item if $m<j$ then it also contains $q^{\ell t+j-m+1}-q^{\ell t}$ points of weight $m$, $q^{\ell t}$ points of weight $j$ and $q^{k-2i+1}-q^{k-2i-1}$ points of weight $i$ for $i \in \{1,\ldots,m-1\}$.
\end{itemize}
In particular, $L_U$ has minimum size.
\end{corollary}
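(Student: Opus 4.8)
The plan is to obtain this corollary as a direct instance of the construction in Theorem~\ref{th:contructionfromqt}, taking as building block $L_{U'}$ the Jena--Van de Voorde linear set of Theorem~\ref{th:constructionVdV} inside $\PG(1,q^t)$. First I would set
\[
U' = \langle 1,\mu,\ldots,\mu^{m-1}\rangle_{\F_q} \times \langle 1,\mu,\ldots,\mu^{j-1}\rangle_{\F_q} \subseteq \F_{q^t}^2,
\]
which is legitimate because $\F_q(\mu)=\F_{q^t}$ forces every power of $\mu$ appearing here into $\F_{q^t}$, and one has $\dim_{\F_q}(U')=m+j$. Since $\mu$ generates a degree-$t$ extension and the hypothesis $m+j\leq t+1$ is exactly the condition $t_1+t_2\leq s+1$ of Theorem~\ref{th:constructionVdV} with $s=t$, $t_1=m$, $t_2=j$, that theorem applies to $L_{U'}$: it is a linear set of rank $m+j$ with $|L_{U'}|=q^{m+j-1}+1$ and a completely determined weight distribution. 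I would record at this point that the point $\langle(1,0)\rangle_{\F_{q^t}}$, which is the one treated specially in Theorem~\ref{th:contructionfromqt}, has weight $\dim_{\F_q}\langle 1,\ldots,\mu^{m-1}\rangle_{\F_q}=m$ in $L_{U'}$.

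Next I would verify that the subspace $U$ of the corollary coincides with the one produced by Theorem~\ref{th:contructionfromqt} from the data $(U',\overline S,b)$. Indeed $\{(s+bu_1,u_2)\colon s\in\overline S,\,(u_1,u_2)\in U'\}$ has first coordinate ranging over $\overline S + b\langle 1,\ldots,\mu^{m-1}\rangle_{\F_q}$ and second coordinate over $\langle 1,\ldots,\mu^{j-1}\rangle_{\F_q}=T$; the hypothesis $\overline S\cap b\F_{q^t}=\{0\}$ makes that sum direct and equal to $S$, so the subspace is exactly $S\times T=U$. Theorem~\ref{th:contructionfromqt} then immediately gives that the rank is $k=\ell t+(m+j)$, that $|L_U|-1=q^{\ell t}(|L_{U'}|-1)=q^{\ell t}\cdot q^{m+j-1}=q^{k-1}$ so $|L_U|=q^{k-1}+1$, and that the special point $\langle(1,0)\rangle_{\fqn}$ has weight $\ell t+w_{L_{U'}}(\langle(1,0)\rangle_{\F_{q^t}})=\ell t+m$.

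The only genuinely delicate step, which I expect to be the main source of error, is translating the weight distribution of $L_{U'}$ through the relation $\overline N_{i_h}=q^{\ell t}N_{i_h}'$ of Theorem~\ref{th:contructionfromqt}, where $N_{i_h}'$ counts the points of $L_{U'}$ of weight $i_h$ \emph{other than} the removed point $\langle(1,0)\rangle_{\F_{q^t}}$. One must keep in mind that Theorem~\ref{th:constructionVdV} is stated with the convention $t_1\leq t_2$ and places the unique maximal-weight point $\max\{m,j\}$ at $\langle(0,1)\rangle_{\F_{q^t}}$, which is distinct from the removed point $\langle(1,0)\rangle_{\F_{q^t}}$; the argument therefore splits according to which of $m,j$ is larger. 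When $m\geq j$ one reads Theorem~\ref{th:constructionVdV} with the coordinates interchanged, so that $\langle(1,0)\rangle_{\F_{q^t}}$ becomes the unique point of maximal weight $m$ and carries all the weight-$m$ mass; after removing it there remain $q^{m-j+1}$ points of weight $j$ and $q^{(m+j)-2i+1}-q^{(m+j)-2i-1}$ points of weight $i\in\{1,\ldots,j-1\}$, and multiplying by $q^{\ell t}$ reproduces the counts listed for the case $m\geq j$. When $m<j$ the point $\langle(1,0)\rangle_{\F_{q^t}}$ instead lies among the $q^{j-m+1}$ points of weight $m$, so removing it leaves $q^{j-m+1}-1$ of these together with the single point $\langle(0,1)\rangle_{\F_{q^t}}$ of weight $j$ and the $q^{(m+j)-2i+1}-q^{(m+j)-2i-1}$ points of weight $i\in\{1,\ldots,m-1\}$; multiplying by $q^{\ell t}$ yields $q^{\ell t+j-m+1}-q^{\ell t}$ points of weight $m$, $q^{\ell t}$ points of weight $j$, and $q^{k-2i+1}-q^{k-2i-1}$ points of weight $i$, as stated. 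Finally, since in every case the weight spectrum contains the weight one and $|L_U|=q^{k-1}+1$, the set $L_U$ meets the De Beule--Van de Voorde bound and hence is of minimum size. The remainder being a mechanical substitution into the two cited theorems, I expect the bookkeeping in this last step---correctly recognizing $\langle(1,0)\rangle$ as the maximal-weight point precisely when $m\geq j$, and subtracting one point in the complementary case---to be the delicate part.
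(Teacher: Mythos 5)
Your proposal is correct and follows exactly the route the paper intends: the corollary is stated as a direct combination of Theorem~\ref{th:contructionfromqt} applied to the Jena--Van de Voorde set $U'=\langle 1,\ldots,\mu^{m-1}\rangle_{\F_q}\times\langle 1,\ldots,\mu^{j-1}\rangle_{\F_q}$ of Theorem~\ref{th:constructionVdV}, and your bookkeeping (in particular recognizing that $\langle(1,0)\rangle_{\F_{q^t}}$ has weight $m$ in $L_{U'}$, so it is the maximal-weight point when $m\geq j$ but must be subtracted from the weight-$m$ count when $m<j$) is exactly the computation the paper leaves implicit.
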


If $j>m$ in Corollary \ref{th:newminimumsize}, we obtain linear sets of minimum size with a different weight distribution from those of Theorem \ref{th:constructionVdV}. 

\begin{theorem} \label{th:confrontopesi}
Let $\ell',t>1$ be positive integers such that $n =\ell' t$. Suppose that $\F_{q}(\mu)=\F_{q^t}$, for some $\mu \in \F_{q^n}^*$. Let $\overline{S}$ be an $\F_{q^t}$-subspaces of $\F_{q^n}$ of dimension $\ell$, where $1\leq \ell<\ell'$ and $b \in \F_{q^n}^*$ such that $\overline{S} \cap b \F_{q^t}=\{0\}$.
Let $m,j>0$ be integers such that $m+j \leq t+1$. Let $S=\overline{S} \oplus b \langle 1,\mu,\ldots,\mu^{m-1} \rangle_{\F_q}$, $T = \langle 1,\mu,\ldots,\mu^{j-1} \rangle_{\F_q}$ and $U=S \times T$. 
\begin{itemize}
    \item If $j\leq m$, then there exists a linear set obtained by Construction \ref{th:constructionVdV} with the same weight spectrum and weight distribution of $L_U$.
    \item If $j>m$, then there exists a linear set obtained by Construction \ref{th:constructionVdV} with the same weight spectrum and weight distribution of $L_U$ if and only if $j=m+1$. 
\end{itemize}
\end{theorem}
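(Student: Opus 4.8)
The plan is to compare the weight distributions produced by the two constructions and show they can match only in the stated cases. The main tool is that $L_U$ here is built via Theorem \ref{th:contructionfromqt} from the linear set $L_{U'}$ of Theorem \ref{th:constructionVdV} living in $\PG(1,q^t)$, whose weight spectrum is completely determined. First I would record the weight data of $L_U$ explicitly using Corollary \ref{th:newminimumsize}: when $j\le m$ the nonzero weights are $\{1,\dots,j-1\}\cup\{j\}\cup\{\ell t+m\}$, with the counts given there; when $j>m$ they are $\{1,\dots,m-1\}\cup\{m\}\cup\{j\}\cup\{\ell t+m\}$. In parallel I would read off the weight spectrum of a generic member of Theorem \ref{th:constructionVdV}, which for parameters $(t_1,t_2)$ with $t_1\le t_2$ is $\{1,\dots,t_1-1\}\cup\{t_1\}\cup\{t_2\}$ (the point of weight $t_2$, $q^{t_2-t_1+1}$ points of weight $t_1$, and $q^{k-2i+1}-q^{k-2i-1}$ points of weight $i$ for $i<t_1$), with rank $t_1+t_2$.

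The strategy is then to match spectra and distributions. In the case $j\le m$, note that $L_U$ has exactly the three-tier shape of a construction of Theorem \ref{th:constructionVdV}: set $t_1=j$ and $t_2=\ell t+m$. Since $k=\ell t+m+j=t_1+t_2$ the ranks agree, the bottom tier counts $q^{k-2i+1}-q^{k-2i-1}$ for $i<j$ agree verbatim, the single top point of weight $\ell t+m=t_2$ matches, and the middle tier count $q^{\ell t+m-j+1}=q^{t_2-t_1+1}$ is exactly the predicted number of weight-$t_1$ points. I would only need to check $t_1\le t_2$, i.e. $j\le \ell t+m$, which holds, and that the parameter constraint $t_1+t_2\le s+1$ of Theorem \ref{th:constructionVdV} can be met by choosing a suitable generator $\lambda$ of a large enough subfield; this gives the forward implication, and the converse in this case is automatic.

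For $j>m$, $L_U$ has a genuine four-tier spectrum $\{1,\dots,m-1\}\cup\{m\}\cup\{j\}\cup\{\ell t+m\}$, whereas every construction of Theorem \ref{th:constructionVdV} has only three tiers: the intermediate weights strictly between $1$ and $t_1$ form a full contiguous block, and above $t_1$ there is a single extra weight $t_2$. Thus a match forces the two ``isolated upper'' weights $j$ and $\ell t+m$ of $L_U$ to coincide with $t_1$ and $t_2$; since $t_1\le t_2$ and $j<\ell t+m$ we must have $t_1=j$, $t_2=\ell t+m$. The decisive point is the bottom block: in Theorem \ref{th:constructionVdV} the weights $1,\dots,t_1-1=j-1$ all occur, but in $L_U$ the weights present below $j$ are only $1,\dots,m-1$ together with the isolated $m$, with no points of weight $m+1,\dots,j-1$. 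For the spectra to be equal this gap must be empty, i.e. $\{m+1,\dots,j-1\}=\emptyset$, which is exactly $j\le m+1$; combined with $j>m$ this gives $j=m+1$. I would then verify sufficiency: when $j=m+1$ the weight $m$ plays the role of $t_1-1$ in a contiguous block and the resulting spectrum and counts coincide with those of the $(t_1,t_2)=(m+1,\ell t+m)$ instance of Theorem \ref{th:constructionVdV}, matching the counts tier by tier and invoking Remark \ref{rk:inequivalent} only to interpret the conclusion.

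The step I expect to require the most care is the exact bookkeeping of the middle-tier counts in the $j>m$ case, where one must confirm not merely that the \emph{spectrum} (set of weights) differs but that no reindexing $(t_1,t_2)$ of Theorem \ref{th:constructionVdV} can reproduce the \emph{distribution} when $j>m+1$; the cleanest argument is that any such construction forces a point of weight $t_1-1=j-1>m$ to exist whenever $j-1\ge 1$, contradicting the absence of weights in $\{m+1,\dots,j-1\}$ in $L_U$, so the obstruction is genuinely the contiguity of the low-weight block rather than a coincidence of cardinalities.
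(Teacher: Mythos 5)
Your proposal is correct and follows essentially the same route as the paper: for $j\le m$ you exhibit the instance of Theorem \ref{th:constructionVdV} with $(t_1,t_2)=(j,\ell t+m)$ and match the counts tier by tier, and for $j>m$ you force $(t_1,t_2)=(j,\ell t+m)$ from the top weight and the rank and then use the fact that the construction of Theorem \ref{th:constructionVdV} always contains points of every weight $1,\dots,t_1-1$, so the absence of weight $j-1$ in $L_U$ when $j>m+1$ gives the contradiction, exactly as in the paper. The only cosmetic difference is that you phrase the identification of $(t_1,t_2)$ via ``isolated upper weights'' rather than via the unique point of maximal weight plus the rank, but the content is the same.
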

\begin{proof}
Suppose $j\leq m$. Let $\lambda\in \fqn \setminus \fq$ a generator of $\F_{q^n}$ and consider the linear set
\[ L_W=\{ \langle (\alpha_0+\alpha_1 \lambda+\ldots+\alpha_{t_1-1}\lambda^{t_1-1},\beta_0+\beta_1 \lambda+\ldots+\beta_{t_2-1}\lambda^{t_2-1}) \rangle_{\fqn} \colon \alpha_i,\beta_i \in \fq,\,\]\[\text{not all zero}\}, \]
with $t_1=j$ and $t_2=\ell t+m$. By Theorem \ref{th:constructionVdV}, the linear set $L_W$ has one point of weight $\ell t+m$, $q^{\ell t + m-j+1}$ points of weight $j$ and $q^{\ell t+m+j-2i+1}-q^{\ell t+ m+j-2i-1}$ points of weight $i \in \{1,\ldots, j-1\}$.
Then $L_U$ and $L_W$ have the same weight spectrum and the same weight distribution.
Suppose now that $j>m$ and suppose there exists $\lambda\in \fqn \setminus \fq$ that generates a degree $s$-extension of $\fq$ such that the linear set
\[ L_W=\{ \langle (\alpha_0+\alpha_1 \lambda+\ldots+\alpha_{t_1-1}\lambda^{t_1-1},\beta_0+\beta_1 \lambda+\ldots+\beta_{t_2-1}\lambda^{t_2-1}) \rangle_{\fqn} \colon \alpha_i,\beta_i \in \fq,\,\]\[\text{not all zero}\}, \]
with $t_2 \geq t_1$ and $t_1+t_2 \leq s+1$
has the same weight distribution of $L_U$. In particular, $t_2=\ell t+m$ and $t_1=j$. So, by Theorem \ref{th:constructionVdV}, the linear set $L_W\subseteq \PG(1,q^n)$ has one point of weight $\ell t+m$, $q^{\ell t + m-j+1}$ points of weight $j$ and $q^{\ell t+m+j-2i+1}-q^{\ell t+ m+j-2i-1}$ points of weight $i \in \{1,\ldots, j-1\}$. This implies that $m=j-1$. Indeed, if $m<j-1$, then $L_W$ has a point of weight $j-1$, but $L_U$ does not, a contradiction. So, if $m=j-1$, by Corollary \ref{th:newminimumsize} the weight spectrum and weight distribution of $L_W$, coincides with that of $L_U$.
\end{proof}

\begin{remark}
Theorem \ref{th:confrontopesi} shows that there exist linear sets in Theorem \ref{th:newminimumsize} with a different weight distribution of those in Theorem \ref{th:constructionVdV}, hence the family of Theorem \ref{th:newminimumsize} is larger than the family of Theorem \ref{th:constructionVdV}.
Moreover, this also proves that the weight distribution of a linear set of minimum size with complementary weights of type $(k-r,r)$ is not unique.
\end{remark}

\begin{remark}
In Corollary \ref{th:newminimumsize}, if $W=\langle U \rangle_{\F_{q^t}}$ then clearly $L_U\subseteq L_W$. The equality $L_U=L_W$ holds if and only if $m+j=t+1$. 
In particular, if $L_U$ has rank $n$ the latter case cannot happen.
Indeed, if $m+j=t+1$ then $\ell t+m+j$ should coincide with $n$, but $t\mid n$ and $t \nmid \ell t+m+j$.
\end{remark}

\begin{example}
Consider $\F_{q^4}$, and let $\mu \in \F_{q^4}$ be a generator element, i.e. $\F_{q}(\mu)=\F_{q^4}$.
By Theorem \ref{th:constructionVdV}, the linear set $L_W \subseteq \PG(1,q^4)$, where 
\[
W=\langle 1 \rangle_{\F_q} \times \langle 1,\mu, \mu^2 \rangle_{\F_q},
\]
has weight spectrum $(1,3)$ and minimum size $q^3+1$. Corollary \ref{th:newminimumsize} allows us to \emph{lift} this construction to a linear set of $\PG(1,q^n)$ with minimum size, where $n$ is a multiple of $4$. More precisely, let $\ell'>1$ be positive integer and let $n=4 \ell'$. Let $\overline{S}$ be an $\F_{q^4}$-subspace of $\F_{q^n}$ of dimension $\ell=\ell'-1$ and $b \in \F_{q^n}^*$ such that $b \notin \overline{S}$. Consider \[U=(\overline{S} \oplus b \langle 1 \rangle_{\F_q}) \times \langle 1,\mu,\mu^{2} \rangle_{\F_q} \subseteq \F_{q^n}^2. \] Then, by Corollary \ref{th:newminimumsize} it follows that $L_U$ is an $\fq$-linear set of rank $n$ and size $q^{n-1}+1$. 
Moreover, $L_U$ has weight spectrum $(1,3,n-3)$. In particular $L_U$ has no point of weight $2$. 
Note that this weight spectrum cannot be obtained from the construction in Theorem \ref{th:constructionVdV}.
\end{example}

In the case in which we choose $L_{U'}$ to be a scattered $\fq$-linear set, then the construction of Theorem \ref{th:contructionfromqt} yields to examples of $i$-club.

\begin{corollary}\label{cor:i-clubqt}
Let $\ell',t>1$ be positive integers such that $n =\ell' t$. 
Let $U'$ be an $\fq$-subspace of $\F_{q^t}^2$ with $\dim_{\fq}(U')=m \leq t$ and such that $L_{U'} \subseteq \PG(1,q^t)$ is a scattered $\fq$-linear set.
Let $\overline{S}$ be an $\F_{q^t}$-subspace of $\F_{q^n}$ of dimension $\ell<\ell'$ and $b \in \F_{q^n}^*$ such that $\overline{S} \cap b \F_{q^t}=\{0\}$.
Then the $\fq$-linear set $L_U$ with 
\[U=\{(s+b u_1, u_2) \colon s \in \overline{S}, (u_1, u_2) \in U'\} \]
has rank $k=\ell t+m$, 
\begin{equation}\label{eq:sizeLUconstclub}
\lvert L_U \rvert-1 = q^{\ell t} (q^{t-1}+\ldots+q),
\end{equation}
and $L_U$ is an $i$-club with $i \in \{t\ell,t\ell+1\}$, according to $w_{L_{U'}}(\langle (1,0) \rangle_{\F_{q^t}})\in \{0,1\}$.
\end{corollary}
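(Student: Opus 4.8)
Corollary \ref{cor:i-clubqt} follows directly from **Theorem \ref{th:contructionfromqt}** together with the definition of an $i$-club, so the plan is to read off each claim from the general construction applied to the special case where $L_{U'}$ is scattered.

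Corollary \ref{cor:i-clubqt} is an immediate specialization of Theorem \ref{th:contructionfromqt} to the case where the base linear set $L_{U'}$ is scattered, so the plan is to read off each assertion from the general construction once we record what being scattered forces on the weight data of $L_{U'}$. The key observation is that, by definition, a scattered $\fq$-linear set $L_{U'}$ of rank $m$ in $\PG(1,q^t)$ attains the bound \eqref{eq:card}; by \eqref{eq:pesivett} this is equivalent to every point of $L_{U'}$ having weight one, so in the notation of Theorem \ref{th:contructionfromqt} its weight spectrum reduces to the single value $(i_1)=(1)$ and $\lvert L_{U'}\rvert=(q^m-1)/(q-1)$.

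With this in hand, the rank $k=\ell t+m$ is exactly the rank returned by Theorem \ref{th:contructionfromqt}, and the size \eqref{eq:sizeLUconstclub} follows by substituting the scattered cardinality $\lvert L_{U'}\rvert=(q^m-1)/(q-1)$ into \eqref{eq:sizeLUconst}. For the club structure I would invoke the weight-distribution part of Theorem \ref{th:contructionfromqt}: since the spectrum of $L_{U'}$ consists only of the value $1$, the theorem guarantees that every point of $L_U$ other than $\langle (1,0)\rangle_{\fqn}$ has weight one, while $\langle (1,0)\rangle_{\fqn}$ has weight $t\ell+w_{L_{U'}}(\langle (1,0)\rangle_{\F_{q^t}})$. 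Because $L_{U'}$ is scattered, the point $\langle (1,0)\rangle_{\F_{q^t}}$ has weight either $0$ (if it does not lie in $L_{U'}$) or $1$ (if it does), so this exceptional weight equals $t\ell$ or $t\ell+1$, according to $w_{L_{U'}}(\langle (1,0)\rangle_{\F_{q^t}})\in\{0,1\}$, exactly as claimed.

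There is no genuine obstacle here beyond bookkeeping; the only step requiring a moment of care is confirming that $\langle (1,0)\rangle_{\fqn}$ really is a point of $L_U$ and is the unique point of weight exceeding one. This is clear since $\overline{S}\neq\{0\}$ forces $(s,0)\in U$ for some nonzero $s$, placing $\langle (1,0)\rangle_{\fqn}$ in $L_U$, and since $t>1$ and $\ell\geq 1$ give $t\ell\geq t\geq 2$, the exceptional weight strictly exceeds the weight one of every other point. Hence $L_U$ has exactly one point of weight $i\in\{t\ell,t\ell+1\}$ with all remaining points of weight one, which is precisely an $i$-club of rank $k$.
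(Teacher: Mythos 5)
Your proposal is correct and follows exactly the route the paper intends: the corollary is stated as an immediate specialization of Theorem \ref{th:contructionfromqt}, using that a scattered linear set is precisely one in which every point has weight one (by \eqref{eq:card} and \eqref{eq:pesivett}), so the weight spectrum of $L_{U'}$ collapses to $(1)$ and the only point of $L_U$ of higher weight is $\langle (1,0)\rangle_{\fqn}$, of weight $t\ell+w_{L_{U'}}(\langle (1,0)\rangle_{\F_{q^t}})$. One remark: substituting $\lvert L_{U'}\rvert=(q^m-1)/(q-1)$ into \eqref{eq:sizeLUconst} yields $\lvert L_U\rvert-1=q^{\ell t}(q^{m-1}+\ldots+q)$, which agrees with the displayed formula \eqref{eq:sizeLUconstclub} only when $m=t$; your computation is the correct one, and the exponent $t-1$ in the statement should presumably read $m-1$.
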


\begin{remark}
Let $W=\{(x,f(x))\colon x \in \F_{q^t}\}$ be such that $L_{W}$ is a scattered $\fq$-linear set in $\mathrm{PG}(1,q^t)$, for some $q$-polynomial $f(x)$, and let $\{1,\omega,\ldots,\omega^{\frac{n}t-1}\}$ be an $\F_{q^t}$-basis of $\fqn$.
Choosing in Corollary \ref{cor:i-clubqt},
\[U'=\{(f(x)-ax,x)\colon x \in \F_{q^t}\}\,\,\, \text{and}\,\,\, \overline{S}=\langle \omega,\ldots,\omega^{\frac{n}t-1}\rangle_{\F_{q^t}},\]
we obtain the construction of $i$-club provided by De Boeck and Van de Voorde in \cite[Theorem 3.6]{DeBoeckVdV2016}, where these linear sets were associated to the translation KM-arcs constructed by G\'acs and Weiner in \cite{GW03}.
\end{remark}

\section{More on the $\mathrm{\Gamma L}(2,q^n)$-equivalence}

We now show that the two different types of $\fq$-subspaces obtained in Theorem \ref{th:constructionVdV} and Theorem \ref{th:newminimumsize} are $\Gamma\mathrm{L}(2,q^n)$-inequivalent, even if the associated linear sets have the same weight spectrum and the same weight distribution (see Theorem \ref{th:confrontopesi}).

\begin{theorem} \label{th:equivalenceminimumsize}
Let $\ell',t>1$ be two positive integers and let $n =\ell' t$. Suppose that $\F_{q}(\mu)=\F_{q^t}$, for some $\mu \in \F_{q^n}^*$. Let $\overline{S}$ be an $\ell$-dimensional $\F_{q^t}$-subspace of $\F_{q^n}$ with $1\leq \ell<\ell'$ and $b \in \F_{q^n}^*$ such that $\overline{S} \cap b \F_{q^t}=\{0\}$. 
Let $m,j>0$ be integers such that $m+j \leq t+1$. Let $S_1=\overline{S} \oplus b \langle 1,\mu,\ldots,\mu^{m-1} \rangle_{\F_q}$, $T_1 = \langle 1,\mu,\ldots,\mu^{j-1} \rangle_{\F_q}$ and $U_1=S_1 \times T_1$. Suppose that $n=\ell t+m+j$.
Let $\lambda\in \fqn \setminus \fq$ be such that $\fq(\lambda)=\F_{q^s}$ and $\ell t+m+j \leq s+1$. Let $U_2=S_2 \times T_2$, where $S_2=\langle 1,\lambda,\ldots,\lambda^{k-1}\rangle_{\fq}$ and $T_2=\langle 1,\lambda,\ldots,\lambda^{j-1}\rangle_{\fq}$ with $k=\ell t+m$.
Then the $\fq$-subspaces $U_1$ are $U_2$ are $\Gamma\mathrm{L}(2,q^n)$-inequivalent.
\end{theorem}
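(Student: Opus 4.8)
The plan is to prove inequivalence by exhibiting a structural invariant of the defining subspaces that is preserved under $\Gamma\mathrm{L}(2,q^n)$-equivalence but differs between $U_1$ and $U_2$. The natural invariant here is the \emph{maximum field of linearity} of the subspaces, or more precisely the behavior of $S_1$ and $S_2$ under multiplication by elements of intermediate fields. Recall that $U_2 = S_2 \times T_2$ with $S_2 = \langle 1,\lambda,\ldots,\lambda^{k-1}\rangle_{\fq}$ a polynomial-basis subspace, where $\lambda$ generates $\F_{q^s}$ over $\fq$ with $s \geq \ell t + m + j - 1 = n-1$; whereas $S_1 = \overline{S} \oplus b\langle 1,\mu,\ldots,\mu^{m-1}\rangle_{\fq}$ has an $\F_{q^t}$-subspace $\overline{S}$ of dimension $\ell \geq 1$ sitting inside it as a direct summand. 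The key distinction I would exploit is that $S_1$ contains a large $\F_{q^t}$-subspace (namely $\overline{S}$, of $\fq$-dimension $\ell t$), while $S_2$, being generated by consecutive powers of an element of high degree, contains no nonzero $\F_{q^{t}}$-subspace when $t \nmid$ the relevant dimension parameters.

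\textbf{Translating $\Gamma\mathrm{L}$-equivalence into conditions on $S$ and $T$.} First I would recall how $\Gamma\mathrm{L}(2,q^n)$ acts: an equivalence between $U_1$ and $U_2$ is given by a map $\varphi \in \Gamma\mathrm{L}(2,q^n)$, i.e. an invertible $\fqn$-semilinear map of $\fqn^2$, with $\varphi(U_1) = U_2$. Since both subspaces have the form $S \times T$ coming from two points of complementary weights $\langle(1,0)\rangle$ and $\langle(0,1)\rangle$, I would first analyze how $\varphi$ can act on these two distinguished points. The weight spectrum and weight distribution are $\Gamma\mathrm{L}$-invariants (Remark~\ref{rk:inequivalent}), and by Theorem~\ref{th:confrontopesi} the two linear sets share the same weight data precisely when $m+j$ and $j$ are matched as in the statement, so that route alone does \emph{not} distinguish them — which is exactly why a finer invariant is needed. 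The semilinear map permutes the points of maximal weight; I would show that $\varphi$ must send the unique weight-$(\ell t + m)$ point of $L_{U_1}$ to the corresponding point of $L_{U_2}$, so after composing with a suitable element of $\mathrm{GL}(2,q^n)$ I may assume $\varphi$ fixes $\langle(1,0)\rangle$ and normalizes the pair. This reduces the equivalence to a semilinear correspondence between the ``$S$-components'' $S_1$ and $S_2$ and between the ``$T$-components'' $T_1$ and $T_2$, up to a Frobenius twist $x \mapsto x^{q^h}$ and scalar factors.

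\textbf{The field-of-linearity obstruction.} The heart of the argument is then to compare the algebraic structure of $S_1$ and $S_2$ as $\fq$-subspaces, taking into account that a semilinear map applies a field automorphism $x \mapsto x^{q^h}$, which sends $\F_{q^t}$-subspaces to $\F_{q^t}$-subspaces (since $\F_{q^t}$ is Frobenius-stable) and scalar multiplications $aS$ to $a^{q^h} S^{(q^h)}$. Thus, if $S_1$ and $S_2$ were equivalent under such a map, then because $S_1 \supseteq \overline{S}$ with $\overline{S}$ an $\F_{q^t}$-subspace of dimension $\ell \geq 1$, the image would force $S_2$ (up to scalar) to contain an $\F_{q^t}$-subspace of $\fq$-dimension $\ell t \geq t$. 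I would then prove that $S_2 = \langle 1,\lambda,\ldots,\lambda^{k-1}\rangle_{\fq}$ contains no nonzero $\F_{q^t}$-subspace: indeed, if $a\F_{q^t} \subseteq c\, S_2$ for some $a,c$, then applying Lemma~\ref{lemma:power}(a) to the relation forced by closure under multiplication by a generator of $\F_{q^t}$ would contradict the fact that $S_2$ is spanned by consecutive powers of $\lambda$ with $\fq(\lambda)$ of degree $s \geq n-1 > t$ — a subspace with a polynomial basis in such a generator is ``rigid'' and cannot absorb multiplication by $\mu$ on any nontrivial subspace without collapsing the consecutive-powers structure.

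\textbf{The main obstacle.} I expect the main difficulty to lie in controlling the Frobenius twist and the scalar factors \emph{simultaneously}: the semilinear map may both raise to a power $q^h$ and multiply by field elements on each coordinate independently, so the clean statement ``$S_1$ has an $\F_{q^t}$-subspace, $S_2$ does not'' must be shown to be genuinely invariant under the full group $\mathrm{G}\Gamma\mathrm{L}$, not merely under $\mathrm{GL}$ or pure Frobenius. Concretely, I would need to verify that the property ``$\exists\, a \in \fqn^* : aS$ contains a nonzero $\F_{q^t}$-subspace'' is preserved, and that the power map $q^h$ cannot create such a subspace in $S_2^{(q^h)}$ where none existed, using that $S_2^{(q^h)} = \langle 1, \lambda^{q^h}, \ldots, \lambda^{(k-1)q^h}\rangle_{\fq}$ is \emph{not} in general a consecutive-powers space, so the rigidity argument must instead invoke that $\lambda^{q^h}$ still generates $\F_{q^s}$ and re-derive the no-$\F_{q^t}$-subspace conclusion via Lemma~\ref{lemma:power} in the twisted coordinates. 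Once this invariance is pinned down, the contradiction is immediate and completes the proof that $U_1$ and $U_2$ are $\Gamma\mathrm{L}(2,q^n)$-inequivalent.
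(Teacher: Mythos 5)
Your overall skeleton matches the paper's: you dispose of the cases with distinct weight data via Remark \ref{rk:inequivalent} and Theorem \ref{th:confrontopesi}, and in the remaining case you use the uniqueness of the weight-$k$ point to reduce a hypothetical equivalence to a relation of the form $aS_1^{\rho}=S_2$ with $\rho$ a Frobenius power, and then try to exploit the $\F_{q^t}$-subspace $\overline{S}\subseteq S_1$. (Your worry about the twist landing on $S_2$ is a non-issue: put $\rho$ on $S_1$, where $\overline{S}^{\rho}$ is still an $\F_{q^t}$-subspace and $S_2$ keeps its polynomial basis.) The genuine gap is in the decisive step: the invariant you propose, namely that $S_2=\langle 1,\lambda,\ldots,\lambda^{k-1}\rangle_{\fq}$ contains \emph{no} nonzero $\F_{q^t}$-subspace, is false in a large part of the parameter range. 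Indeed $c\,\F_{q^t}\subseteq S_2$ if and only if $S_2^{\perp}\subseteq (c\,\F_{q^t})^{\perp}$, and $(c\,\F_{q^t})^{\perp}$ ranges over all $\F_{q^t}$-hyperplanes of $\fqn$ as $c$ varies; since by Proposition \ref{prop:dualwithdual} $S_2^{\perp}=\delta^{-1}\langle 1,\lambda,\ldots,\lambda^{j-1}\rangle_{\fq}$ spans over $\F_{q^t}$ a subspace of $\F_{q^t}$-dimension at most $j$, such a hyperplane exists whenever $j\leq \ell=n/t-1$ (e.g.\ always for $j=1$). So $S_2$ does, in general, contain nonzero $\F_{q^t}$-lines, and your obstruction evaporates exactly where it is needed. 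Moreover Lemma \ref{lemma:power}(a) cannot deliver the ``rigidity'' you invoke: it concerns the condition $\mu S=S$ for the whole subspace $S$, and says nothing about $\F_{q^t}$-subspaces properly contained in $S$.

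What the paper's proof actually uses is the \emph{full} dimension of $\overline{S}$: since $n=(\ell+1)t$ (forced by $t\mid n$ and $m+j\leq t+1$), the subspace $a\overline{S}^{\rho}$ has $\fq$-dimension $\ell t=n-t$, so its orthogonal complement is a single $\F_{q^t}$-line $c\,\F_{q^t}$ which must contain all of $S_2^{\perp}$. By the dual-basis formula (Lemma \ref{cor:dualbasis}), $S_2^{\perp}$ contains both $\delta^{-1}$ and $\delta^{-1}(a_{n-1}+\lambda)$ once $j\geq 2$, and two elements of a one-dimensional $\F_{q^t}$-space have ratio in $\F_{q^t}$, forcing $\lambda\in\F_{q^t}$, a contradiction with $\fq(\lambda)=\fqn$. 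So the correct statement is not ``$S_2$ has no nonzero $\F_{q^t}$-subspace'' but ``$S_2$ contains no $\F_{q^t}$-subspace of codimension $t$'', and proving it requires the explicit dual-basis computation rather than a rigidity appeal. Your proposal is missing precisely this mechanism.
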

\begin{proof}
Note that if $j>m-1$, the assertion follows by Remark \ref{rk:inequivalent} and Theorem \ref{th:confrontopesi}. So, let assume that $j \leq m-1$.
First note that since $n=\ell t+m+j$ then $\fq(\lambda)=\fqn$.
Moreover, $n=(\ell+1)t$. Indeed, since $t \mid n$ and $n=\ell t+m+j$ then $t \mid m+j \leq t+1$ and so $m+j=t$. 

Now, suppose by contradiction that $U_1$ and $U_2$ are $\Gamma\mathrm{L}(2,q^n)$-equivalent via $\varphi$.
Since $k>j$, $\langle (1,0) \rangle_{\fqn}$ is the only point in $L_{U_1}$ and in $L_{U_2}$ of weight $k$ and hence $\varphi(U_1 \cap \langle (1,0) \rangle_{\fqn})=U_2\cap \langle (1,0) \rangle_{\fqn}$, that is
\[ aS_1^{\rho}=S_2, \]
for some $a \in \fqn^*$ and $\rho \in \mathrm{Aut}(\fqn)$.
In particular, we have that $a\overline{S}^\rho \subseteq S_2$ and hence $(a\overline{S}^\rho)^\perp \supseteq S_2^\perp$. Note that $\dim_{\F_{q^{t}}}(a\overline{S}^{\rho})=\ell$ and hence $\dim_{\F_{q}}(a\overline{S}^{\rho})=t\ell$, so that $\dim_{\fq}((a\overline{S}^\rho)^\perp)=n-\ell t=t$.
So, $(a\overline{S}^\rho)^\perp$ is an $\F_{q^{t}}$-subspace of $\fqn$ of dimension one.
Consider the ordered basis $\mathcal{B}=(1,\ldots,\lambda^{n-1})$ and its dual basis $\mathcal{B}^*=(\lambda_0^*,\ldots,\lambda_{n-1}^*)$.
It follows that $S_2^\perp=\langle \lambda_{n-j}^*,\ldots,\lambda_{n-1}^* \rangle_{\fq}$ and by Lemma \ref{cor:dualbasis} we have that
\[ \lambda_{n-2}^*=\delta^{-1}(a_{n-1}+\lambda), \]
and
\[ \lambda_{n-1}^*=\delta^{-1}, \]
where $f(x)=a_0+a_1x+\ldots+a_{n-1}x^{n-1}+x^n$ is the minimal polynomial of $\lambda$ over $\fq$ and $\delta=f'(\lambda)$.
Now, since $\lambda_{n-2}^*,\lambda_{n-1}^* \in (a\overline{S}^\rho)^\perp$ and since $(a\overline{S}^\rho)^\perp$ has dimension one over $\F_{q^{t}}$, it follows
\[ \frac{\lambda_{n-2}^*}{\lambda_{n-1}^*}=a_{n-1}+\lambda \in \F_{q^{t}}, \]
that is $\lambda \in \F_{q^{t}}$, a contradiction.
\end{proof}

As a consequence, for $k=n$ and $j=2$ we obtain that the two different types of $\fq$-subspaces obtained in Theorem \ref{th:classification2} are $\Gamma\mathrm{L}(2,q^n)$-inequivalent.

\begin{corollary}
 \label{th:equivalenceminimumsize}
Let $\ell',t>1$ be positive integers such that $n =\ell' t$. Suppose that $\F_{q}(\mu)=\F_{q^t}$, for some $\mu \in \F_{q^n}^*$. Let $\overline{S}$ be an $\ell$-dimensional $\F_{q^t}$-subspace of $\F_{q^n}$ with $1\leq \ell<\ell'$ and let $b \in \F_{q^n}^*$ such that $\overline{S} \cap b \F_{q^t}=\{0\}$. 
Let $m>0$ be an integer with $m \leq t-1$. Let $S_1=\overline{S} \oplus b \langle 1,\mu,\ldots,\mu^{m-1} \rangle_{\F_q}$, $T_1 = \langle 1,\mu \rangle_{\F_q}$ and $U_1=S_1 \times T_1$. Suppose that $n=\ell t+m+2$.
Let $\lambda\in \fqn \setminus \fq$ and let $\fq(\lambda)=\F_{q^s}$ such that $\ell t+m \leq s-1$ and let $U_2=S_2 \times T_2$, where $S_2=\langle 1,\lambda,\ldots,\lambda^{k-1}\rangle_{\fq}$ and $T_2=\langle 1,\lambda \rangle_{\fq}$ with $k=\ell t+m$.
Then the $\fq$-subspaces $U_1$ are $U_2$ are $\Gamma\mathrm{L}(2,q^n)$-inequivalent.
\end{corollary}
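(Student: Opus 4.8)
The plan is to read the statement as the special case $j=2$ of the preceding theorem and then simply verify that every hypothesis specializes correctly, so that no new work is required. First I would put $j=2$ there. With this choice the bound $m+j\le t+1$ becomes $m\le t-1$, which is exactly the restriction imposed here; the subspaces $T_1=\langle 1,\mu,\ldots,\mu^{j-1}\rangle_{\F_q}$ and $T_2=\langle 1,\lambda,\ldots,\lambda^{j-1}\rangle_{\F_q}$ collapse to $\langle 1,\mu\rangle_{\F_q}$ and $\langle 1,\lambda\rangle_{\F_q}$; the rank condition $n=\ell t+m+j$ becomes $n=\ell t+m+2$; and the requirement $\ell t+m+j\le s+1$ on the generator $\lambda$ of $\F_{q^s}$ becomes $\ell t+m\le s-1$. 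Since $S_1$, $S_2$, $b$, $\overline{S}$ and the divisibility $t\mid n$ are unchanged, the pair $U_1=S_1\times T_1$, $U_2=S_2\times T_2$ is precisely the pair of the theorem with $j=2$, and its $\Gamma\mathrm{L}(2,q^n)$-inequivalence is the theorem's conclusion.

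I would then record the geometric meaning, which is the reason the result is stated separately. Here $k=\ell t+m$ and the common rank is $k+2=n$, so $L_{U_1}$ and $L_{U_2}$ are both minimum size linear sets of type $(n-2,2)$ carrying a point of weight $n-2$. Because $\ell t+m\le s-1$ forces $s>k$, the space $S_2=\langle 1,\lambda,\ldots,\lambda^{k-1}\rangle_{\F_q}$ has the polynomial-basis shape of case~1 of Theorem~\ref{th:classification2}, whereas $S_1=\overline{S}\oplus b\langle 1,\mu,\ldots,\mu^{m-1}\rangle_{\F_q}$ (with $t\le k-1$ and $0<m<t$) has the decomposable shape of case~2; thus the statement is exactly the assertion that the two families produced by that classification are $\Gamma\mathrm{L}(2,q^n)$-inequivalent.

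I do not expect a genuine obstacle, since all the substance lives in the previous theorem and only the parameter bookkeeping must be checked. If instead one wanted a self-contained argument, the crux would be the dual-basis step inherited from that proof. Assuming an equivalence $\varphi$, the uniqueness of the weight-$(n-2)$ point forces $cS_1^\rho=S_2$ for some $c\in\fqn^*$ and $\rho\in\mathrm{Aut}(\fqn)$; then $(c\overline{S}^\rho)^\perp$ is a one-dimensional $\F_{q^t}$-space. Since $j=2$ gives $S_2^\perp=\langle \lambda_{n-2}^*,\lambda_{n-1}^*\rangle_{\F_q}$ with $\lambda_{n-2}^*=\delta^{-1}(a_{n-1}+\lambda)$ and $\lambda_{n-1}^*=\delta^{-1}$ in the basis dual to $(1,\lambda,\ldots,\lambda^{n-1})$, both these vectors lie in that $\F_{q^t}$-line, so their ratio $a_{n-1}+\lambda$ lies in $\F_{q^t}$, forcing $\lambda\in\F_{q^t}$ and contradicting $\fq(\lambda)=\fqn$. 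That dual-basis computation is where any real work sits.
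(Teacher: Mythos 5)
Your proposal is correct and matches the paper exactly: the paper derives this corollary purely as the $j=2$ specialization of the preceding theorem, and your verification of how each hypothesis ($m+j\le t+1$, $n=\ell t+m+j$, $\ell t+m+j\le s+1$, the forms of $T_1$ and $T_2$) collapses under $j=2$ is the whole content of the argument. Your sketch of the underlying dual-basis step is also faithful to the theorem's proof, though it is not needed here.
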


The above corollary and Theorem \ref{th:classification2} give a complete answer to the case of minimum size linear sets having two points with complementary weights of type $(k-2,2)$.

\section{Critical pairs}

The linear analogue of Kneser's Addition Theorem \cite{Kneser} has been proved by Hou, Leung and Xiang in \cite{HLX02} for separable extensions fields, motivated by a problem on different sets. 
This latter paper started the study of the linear extensions of classical addition theorems; see e.g. \cite{EL09,L14}. 

Bachoc, Serra and Z\'emor in \cite{BSZ2015} extended the result in \cite{HLX02} by removing the separability condition.

\begin{theorem}\cite[Theorem 3]{BSZ2015} \label{teo:bachocserrazemorext}
Let $L/F$ be a field extension and let $S$ be an $F$-subspace of $L$ of finite positive dimension. Then
\begin{itemize}
    \item either for every finite dimensional $F$-subspace $T \subseteq L$ we have
    \[
    \dim_{F}(\langle S T \rangle_{F})\geq \min\{\dim_{F}(S)+\dim_{F}(T)-1,\dim_{F}(L)\},
    \]
    \item or there exists a subfield $K$ of $L$ with $K \supset F$, such that for every finite dimensional $F$-subspace $T \subseteq L$ satisfying 
    \[
    \dim_{F}(\langle S T \rangle_{F})< \dim_{F}(S)+\dim_{F}(T)-1,
    \]
    we have that $\langle S T \rangle_{F}$ is also a $K$-subspace.
\end{itemize}
\end{theorem}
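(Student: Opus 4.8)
The plan is to prove the dichotomy by the \emph{isoperimetric method}, reading the two alternatives as the trivial-versus-nontrivial cases of a single invariant attached to $S$. Throughout I may rescale $S$ so that $1\in S$; then $V\subseteq\langle SV\rangle_F$ for every $F$-subspace $V$, and the \emph{deficiency} $\partial_S(V)=\dim_F\langle SV\rangle_F-\dim_F V$ is a nonnegative integer. I first record that for any finite-dimensional $F$-subspace $W\subseteq L$ the set $\mathrm{Stab}(W)=\{x\in L:xW\subseteq W\}$ is a subfield of $L$ containing $F$: for $x\neq0$, multiplication by $x$ is an injective, hence bijective, $F$-endomorphism of $W$, so $xW=W$ and $x^{-1}W=W$, while closure under addition is immediate from $xW,yW\subseteq W\Rightarrow (x+y)W\subseteq W$. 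This reduction matters because the conclusion of the second alternative, that $\langle ST\rangle_F$ is a $K$-subspace, is exactly the assertion $K\subseteq\mathrm{Stab}(\langle ST\rangle_F)$; so the goal becomes to produce one subfield $K\supsetneq F$, depending only on $S$, contained in $\mathrm{Stab}(\langle ST\rangle_F)$ for every $T$ violating the Cauchy--Davenport bound.

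The engine is \textbf{submodularity}. For finite-dimensional $F$-subspaces $V_1,V_2$ one has $\langle S(V_1+V_2)\rangle_F=\langle SV_1\rangle_F+\langle SV_2\rangle_F$ and $\langle S(V_1\cap V_2)\rangle_F\subseteq\langle SV_1\rangle_F\cap\langle SV_2\rangle_F$, whence the modular law for dimensions yields
\[
\partial_S(V_1+V_2)+\partial_S(V_1\cap V_2)\le \partial_S(V_1)+\partial_S(V_2).
\]
I then introduce the connectivity $\kappa(S)=\min\partial_S(V)$, the minimum over nonzero finite-dimensional $V$ with $\langle SV\rangle_F\neq L$ (this last constraint being vacuous when $\dim_F L=\infty$). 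A \emph{fragment} is a minimizer, and an \emph{atom} is a fragment of least $F$-dimension. Submodularity shows that if two fragments meet nontrivially and their sum stays below the ceiling $L$, then both their intersection and their sum are fragments; minimality of the atom dimension then forces any two atoms to coincide or to meet only in $\{0\}$, so there is a \emph{unique atom $H$ with $1\in H$}. Moreover $H$ is a subfield: for $h\in H\setminus\{0\}$ the space $h^{-1}H$ has the same deficiency and dimension as $H$ and contains $1$, so by uniqueness $h^{-1}H=H$; with $1\in H$ and the $F$-subspace structure, $K:=H$ is a subfield of $L$ containing $F$.

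Finally I read off the dichotomy from $\kappa(S)$. Since $\langle SF\rangle_F=S$, the subspace $F$ has $\partial_S(F)=\dim_F S-1$, so the first alternative holds precisely when $\kappa(S)\ge\dim_F S-1$. Otherwise $\kappa(S)<\dim_F S-1=\partial_S(F)$, forcing the atom $H=K$ to properly contain $F$; and it remains to show $\langle ST\rangle_F$ is $K$-stable for every violating $T$ (such $T$ automatically has $\langle ST\rangle_F\neq L$, as otherwise the minimum in the statement would be attained by $\dim_F L$). This last step is an \emph{absorption lemma}: comparing $T$ with its $K$-span $\langle KT\rangle_F$ through the submodular inequality and the extremality of the atom, one shows $K\langle ST\rangle_F=\langle ST\rangle_F$, i.e.\ $K\subseteq\mathrm{Stab}(\langle ST\rangle_F)$, which is the second alternative.

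I expect the genuine difficulties to sit in two places. The first is the \textbf{absorption step} that upgrades ``some $K\supsetneq F$ exists'' to ``\emph{every} violating $\langle ST\rangle_F$ is a $K$-subspace for the \emph{same} $K$'': making the atom absorb an arbitrary near-extremal $T$, rather than only exactly-extremal fragments, is where the submodular calculus must be pushed hardest, together with the bookkeeping of the ceiling $L$ when $\dim_F L<\infty$ (ensuring sums of fragments do not spuriously saturate). The second, which is exactly the advance of \cite{BSZ2015} over the separable setting of \cite{HLX02}, is that the field structure of the atom must be extracted \emph{purely} from isoperimetric minimality and the multiplicative symmetry $V\mapsto h^{-1}V$, since in the inseparable case one cannot retreat to \'etale or tensor-product arguments; verifying in full generality that $h^{-1}H$ is again a $1$-atom is the technical heart of the proof.
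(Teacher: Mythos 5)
First, a point of comparison: the paper does not prove this statement at all --- it is imported verbatim as \cite[Theorem 3]{BSZ2015} and used as a black box --- so there is no internal proof to measure your attempt against; what follows assesses the sketch on its own terms.

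Your setup (rescaling so $1\in S$, submodularity of the deficiency $\partial_S$, fragments, atoms, the $1$-atom being a subfield, and the observation that $\mathrm{Stab}(W)$ is a subfield for finite-dimensional $W$) is sound as far as it goes, and the reduction of the first alternative to $\kappa(S)\ge\dim_F S-1$ is correct. But the two places you yourself flag as ``difficulties'' are not loose ends --- they are the theorem. (1) The absorption step is asserted, not proved. The isoperimetric machinery naturally controls \emph{minimizers} of $\partial_S$, which is why it is the right engine for the Vosper-type Theorem \ref{teo:bachocserrazemor2}; Kneser's theorem, by contrast, must stabilize $\langle ST\rangle_F$ for \emph{every} $T$ with $\partial_S(T)<\dim_F S-1$, not just extremal ones. ``Comparing $T$ with $\langle KT\rangle_F$ through the submodular inequality'' does not obviously close this: submodularity applied to $T$ and a fragment $V$ bounds $\partial_S(T+V)+\partial_S(T\cap V)$, but $T$ need not be a fragment, nothing forces $T\cap V\neq\{0\}$, and nothing prevents $\langle S(T+V)\rangle_F=L$, in which case the inequality degenerates. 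This is exactly where Kneser-type proofs (the Dyson transform classically, the linear $e$-transform of Hou--Leung--Xiang, or the separability-free argument of \cite{BSZ2015}) do their real work, and your sketch supplies none of it. (2) Even the uniqueness of the $1$-atom, on which your construction of $K$ and the proof that it is a field both rest, is incomplete when $\dim_F L<\infty$: two distinct atoms $A_1,A_2$ through $1$ could a priori satisfy $\langle S(A_1+A_2)\rangle_F=L$, and then submodularity only yields $\partial_S(A_1\cap A_2)\le 2\kappa(S)$, so the intended contradiction with minimality of the atom dimension evaporates. Until the absorption lemma and this saturation case are actually carried out, the proposal is a plausible programme rather than a proof.
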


Theorem \ref{teo:bachocserrazemor} is an instance of this result.
The above result can be also seen as an improvement of the Cauchy-Davenport inequality for field extensions, whose linear analogue was proved in \cite[Theorem 6.2]{EL09} .
Vosper in \cite{Vosper} proved an inverse statement of the Cauchy-Davenport inequality in additive theory by providing examples of pair of sets attaining the equality in the Cauchy-Davenport inequality, which are known as \textbf{critical pairs} (see also \cite{Lev}).
Bachoc, Serra and Z\'emor in Theorem \ref{teo:bachocserrazemor2} provided a linear analogue of Vosper's result in the case of prime degree extensions of finite fields.
As already pointed out by Bachoc, Serra and Z\'emor in \cite[Section 8 (iv)]{BSZ2017}, an important open problem in the field theory regards the characterization of critical pairs achieving the equality in the first point of Theorem \ref{teo:bachocserrazemorext}. 

The subspaces used in Corollary \ref{th:newminimumsize} give examples of critical pairs.

\begin{example}\label{ex:critpair4.2}
Let $L/F$ be a field extension and let $\mu \in L\setminus F$, such that $\mu$ is algebraic over $F$.
Denote $K=F(\mu)$ and $t=\dim_F(K)$.
Consider $\overline{S}$ a $K$-subspace of $L$ of dimension $\ell$, with $1 \leq \ell<\dim_K(L)$, and $b \in L^*$ such that $\overline{S} \cap b K=\{0\}$.
Let $m,j>0$ be integers such that $m+j \leq t+1$. 
It is easy to verify that the couple $(S,T)$, where $S=\overline{S} \oplus b \langle 1,\mu,\ldots,\mu^{m-1} \rangle_{F}$ and $T = \langle 1,\mu,\ldots,\mu^{j-1} \rangle_{F}$, is a critical pair.
Indeed,
\[ \langle ST\rangle_F=\overline{S}\oplus b\langle 1,\mu,\ldots,\mu^{m+j-2} \rangle_{F}, \]
which has dimension $\ell+m+j-1$ over $F$.
\end{example}

Lemma \ref{lemma:power} can be adapted to prove a characterization of critical pairs $(S,T)$ when considering extension fields $L/F$ in which $T$ is a two-dimensional $F$-subspace of $L$ which, up to a scalar multiple, is generated by $1$ and an algebraic element of $L$ over $T$, which was known only in the case of prime degree extensions of finite fields.

\begin{proposition} \label{prop:extalg}
Let $L/F$ be a field extension and let $S$ be an $F$-subspace of $L$ of finite positive dimension $k$ and let $T=\langle 1,\mu\rangle_F$, for some $\mu \in L\setminus F$, such that $k+2 \leq \dim_F(L)$ and $\mu$ is algebraic over $F$.
Let $K=F(\mu)$ and let $t=\dim_F(K)$.
Suppose that 
\[\dim_{F}(\langle S T \rangle_{F})=\dim_F(S)+1.\]
Then one of the following cases occurs:
\begin{itemize}
    \item $t> k$ and $S=b \langle 1,\mu,\ldots,\mu^{k-1}\rangle_{F}$, for some $b \in L^*$;
    \item $t\leq k-1$, write $k=t\ell+m$ with $m<t$, $m>0$ and $S=\overline{S}\oplus b\langle 1,\mu,\ldots,\mu^{m-1}\rangle_{F}$, where $\overline{S}$ is a $K$-subspace of dimension $\ell$, $b \in L^*$ and $b K \cap \overline{S}=\{0\}$.
\end{itemize}
\end{proposition}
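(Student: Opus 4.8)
The plan is to reduce the hypothesis to a statement about $\dim_F(S \cap \mu S)$ and then invoke Lemma \ref{lemma:power}. First I would observe that, since $T = \langle 1, \mu \rangle_F$, one has $\langle ST \rangle_F = S + \mu S$: indeed $S = S\cdot 1 \subseteq \langle ST\rangle_F$ and $\mu S = S \cdot \mu \subseteq \langle ST\rangle_F$, while conversely every product $s(a + b\mu)$ with $s\in S$, $a,b \in F$ lies in $S + \mu S$. As $\mu \neq 0$ we have $\dim_F(\mu S) = \dim_F(S) = k$, so the Grassmann formula gives
\[ \dim_F(\langle ST \rangle_F) = \dim_F(S) + \dim_F(\mu S) - \dim_F(S \cap \mu S) = 2k - \dim_F(S \cap \mu S). \]
Hence the hypothesis $\dim_F(\langle ST\rangle_F) = k + 1$ (which, since $\dim_F(T)=2$, is exactly the critical-pair equality $\dim_F(S)+\dim_F(T)-1$) is equivalent to $\dim_F(S \cap \mu S) = k - 1$, precisely the situation treated in parts (b) and (c) of Lemma \ref{lemma:power}. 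The assumption $k + 2 \leq \dim_F(L)$ only serves to guarantee that $\langle ST\rangle_F$ is a proper subspace of $L$, so that the equality genuinely places $(S,T)$ in the critical regime of Theorem \ref{teo:bachocserrazemorext} rather than in the degenerate case $\langle ST\rangle_F = L$.

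The crucial point is that Lemma \ref{lemma:power}, although stated over $\fqn$, holds verbatim for the extension $L/F$ whenever $\mu$ is algebraic over $F$. Its proof uses only two ingredients: that $K = F(\mu) = F[\mu]$ is a finite extension with $F$-basis $1, \mu, \ldots, \mu^{t-1}$ (valid for any algebraic $\mu$), and elementary dimension counting along the descending chain $S \supseteq S \cap \mu S \supseteq S \cap \mu S \cap \mu^2 S \supseteq \cdots$ of $F$-subspaces. None of the steps — reducing to the case $1 \in S$ by scaling, bounding each successive drop in dimension by one, applying part (a) when a drop fails to occur, or extracting a generator $a_{j-1}$ together with the $F$-independent elements $a_{j-1}, a_{j-1}\mu, \ldots, a_{j-1}\mu^{j-1}$ — uses the finiteness of the field. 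I would therefore restate and reprove Lemma \ref{lemma:power} in this generality, or simply note that the existing argument is field-agnostic, so that it applies to our $S$, $\mu$ and $K$.

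Applying the generalized lemma to $S$ with $\dim_F(S \cap \mu S) = k - 1$ then yields the two cases directly. If $t \geq k$, part (b) gives $S = b\langle 1, \mu, \ldots, \mu^{k-1}\rangle_F$ and moreover $t \neq k$; in particular the borderline value $t = k$ cannot occur, for it would force $S = bK$ and hence $\mu S = S$, contradicting $\dim_F(S \cap \mu S) = k - 1$. Thus $t > k$ in the first case. If $t \leq k - 1$, part (c) gives the decomposition $S = \overline{S} \oplus b\langle 1, \mu, \ldots, \mu^{m-1}\rangle_F$ with $\overline{S}$ a $K$-subspace of dimension $\ell$, $m > 0$ and $bK \cap \overline{S} = \{0\}$, where $k = t\ell + m$ with $m < t$. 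Since $t = k$ is excluded, these two cases are exhaustive, which is the desired conclusion. The main obstacle is thus not the reduction, which is immediate, but the careful verification that the proof of Lemma \ref{lemma:power} transfers to an arbitrary field extension; this amounts to checking that every dimension count and every use of the power basis of $F(\mu)$ is insensitive to whether $L$ is finite.
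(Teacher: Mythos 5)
Your proof is correct and follows essentially the same route as the paper: the authors likewise note $\langle ST\rangle_F=S+\mu S$, apply Grassmann's formula to get $\dim_F(S\cap\mu S)=k-1$, and then state that "arguing as in the proof of Lemma \ref{lemma:power}" (whose argument is indeed field-agnostic once $\mu$ is algebraic) yields the two cases. Your additional remarks on why the lemma transfers and on the exclusion of $t=k$ are accurate elaborations of what the paper leaves implicit.
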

\begin{proof}
First note that
\[ \dim_F(\langle ST \rangle_F)= \dim_F(S+\mu S)=k+1, \]
and hence by Grassmann's formula 
\[ \dim_F(S\cap \mu S)=k-1. \]
Now, arguing as in the proof of Lemma \ref{lemma:power} one gets the desired result.
\end{proof}

\begin{example}
Let $\mathbb{R}$ be the real number field and let $\mathbb{Q}$ be the rational field. 
Consider $T=\langle 1,\sqrt[3]{2} \rangle_{\mathbb{Q}}$ and $S$ a $k$-dimensional $\mathbb{Q}$-subspace of $\mathbb{R}$ with $k \geq 4$.
If $\dim_{\mathbb{Q}}(\langle ST \rangle_{\mathbb{Q}})=k+1$, then by Proposition \ref{prop:extalg}, it follows that $k=3\ell+m$ with $\ell \in \mathbb{Z}^+$ and $m \in\{1,2\}$. 
Accordingly to $m$, we have two possibilities:
\begin{itemize}
    \item if $m=1$ then $S=\overline{S}+\langle b\rangle_{\mathbb{Q}}$;
    \item if $m=2$ then $S=\overline{S}+b\langle 1, \sqrt[3]{2}\rangle_{\mathbb{Q}}$, 
\end{itemize}
where $\overline{S}$ is an $\ell$-dimensional $\mathbb{Q}(\sqrt[3]{2})$-subspace and $b \in \mathbb{R}\setminus \overline{S}$.
By Example \ref{ex:critpair4.2}, the couple $(S,T)$ is a critical pair, and it's clear that $S$ does not admit a polynomial basis in $\sqrt[3]{2}$ when $\ell>2$.
\end{example}

Moreover, as a consequence of Theorem \ref{prop:numberopointsr} the critical pairs over finite fields are also related to linear sets of special type.

\begin{proposition}\label{prop:critpairs}
Let $S,T$ be two $\fq$-subspaces of $\fqn$ having dimension $n-k+r$ and $r$, respectively, with $2r\leq k\leq n+r$ and let $U=S^\perp \times T$.
Then $(S,T)$ is a critical pair if and only if the number of points of weight $r$ in $L_U$, different from $\langle (1,0)\rangle_{\fqn}$, is $q^{k-2r+1}$.
\end{proposition}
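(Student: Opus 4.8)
The plan is to translate the statement about critical pairs into the language of linear sets via the orthogonality relations between intersections and spans of subspaces, and then invoke Theorem \ref{prop:numberopointsr} directly. The key identity I would establish first is the dictionary relating the condition ``$(S,T)$ is a critical pair'' to a weight count in $L_U$ with $U = S^\perp \times T$. By definition, $(S,T)$ is a critical pair exactly when $\dim_{\fq}(\langle ST\rangle_{\fq}) = \dim_{\fq}(S) + \dim_{\fq}(T) - 1 = (n-k+r) + r - 1 = n-k+2r-1$.

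The central computation, which I would carry out exactly as in the proof of Lemma \ref{lem:boundpointsr}, is the orthogonality formula. Writing $T = \langle a_1,\ldots,a_r\rangle_{\fq}$, the $S^\perp$ appearing in $U$ plays the role of ``$S$'' in Theorem \ref{prop:numberopointsr}, so the points of weight $r$ in $L_U$ different from $\langle(1,0)\rangle_{\fqn}$ are counted by $q^j$ with
\[
j = \dim_{\fq}\bigl(a_1^{-1}S^\perp \cap \cdots \cap a_r^{-1}S^\perp\bigr).
\]
Using $(a_i^{-1}S^\perp)^\perp = a_i S$ together with the fact that orthogonal complement interchanges intersection and sum, I would obtain
\[
\bigl(a_1^{-1}S^\perp \cap \cdots \cap a_r^{-1}S^\perp\bigr)^\perp = a_1 S + \cdots + a_r S = \langle S\,T\rangle_{\fq},
\]
so that $n - j = \dim_{\fq}(\langle ST\rangle_{\fq})$, i.e. $j = n - \dim_{\fq}(\langle ST\rangle_{\fq})$.

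With this identity in hand the proposition is almost immediate. The number of points of weight $r$ in $L_U$ different from $\langle(1,0)\rangle_{\fqn}$ equals $q^{k-2r+1}$ precisely when $j = k-2r+1$, which by the boxed identity is equivalent to $\dim_{\fq}(\langle ST\rangle_{\fq}) = n - (k-2r+1) = n-k+2r-1$. Comparing with the critical-pair equality computed above, $\dim_{\fq}(S)+\dim_{\fq}(T)-1 = n-k+2r-1$, the two conditions coincide, giving the desired equivalence.

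The only point requiring care, and the one I would present explicitly, is that the hypotheses genuinely place us inside the regime where Theorem \ref{prop:numberopointsr} applies: one needs $S^\perp$ to have dimension $k-r$ and $T$ dimension $r$ with $r \le k-r$, and one must check that the relevant dimensions are legitimate, i.e. that $\dim_{\fq}(S) = n-k+r$ is a valid subspace dimension. These are exactly guaranteed by the standing assumption $2r \le k \le n+r$, which ensures $0 \le n-k+r \le n$ and $r \le k-r$. I do not expect a genuine obstacle here; the subtlety is purely bookkeeping about which subspace plays which role in the orthogonality dictionary, so the main thing to get right is the clean chain of orthogonal-complement identities rather than any delicate estimate.
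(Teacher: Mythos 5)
Your proposal is correct and follows essentially the same route as the paper: apply Theorem \ref{prop:numberopointsr} to $U=S^\perp\times T$ to get the count $q^j$ with $j=\dim_{\fq}(a_1^{-1}S^\perp\cap\cdots\cap a_r^{-1}S^\perp)$, then use the duality identity $\langle ST\rangle_{\fq}=(a_1^{-1}S^\perp\cap\cdots\cap a_r^{-1}S^\perp)^\perp$ (as in Lemma \ref{lem:boundpointsr}) to translate $j=k-2r+1$ into the critical-pair equality. The paper's proof is exactly this argument, so no changes are needed.
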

\begin{proof}
Let $T=\langle a_1,\ldots,a_r\rangle_{\fq}$, then by Theorem \ref{prop:numberopointsr}, the number of points of weight $r$ in $L_U$ is $q^j$, where $j=\dim_{\fq}(a_1^{-1}S^\perp\cap\ldots\cap a_r^{-1}S^\perp)$.
As in the proof of Lemma \ref{lem:boundpointsr}, we note that
\[ \langle S T\rangle_{\fq}=(a_1^{-1}S^\perp\cap\ldots\cap a_r^{-1}S^\perp)^\perp. \]
So, $j=n-\dim_{\fq}(\langle S T\rangle_{\fq})$, and hence $j=k-2r+1$ if and only if $\dim_{\fq}(\langle S T\rangle_{\fq})=n-k+2r-1$, i.e.\ $(S,T)$ is a critical pair.
\end{proof}

Critical pairs could be good candidates to define eventually new examples of linear sets of minimum size having points with complementary weights in the case in which $n$ is not a prime. However, it is not clear to us whether or not there exist critical pairs such that the associated linear sets as in Proposition \ref{prop:critpairs} are not of minimum size.

\begin{small}

\end{small}

\end{document}